\documentclass[12pt]{amsart}
\usepackage[dvipsnames]{xcolor}
\usepackage[english]{babel}
\usepackage{amsfonts}
\usepackage{amsthm}
\usepackage{amsbsy}
\usepackage{amssymb}
\usepackage{comment}
\usepackage{graphicx}
\usepackage{eso-pic}
\usepackage{tikz} \usetikzlibrary{arrows,decorations.markings,matrix}
\usepackage{xfrac}
\usepackage{float}
\usepackage[bookmarks=true, colorlinks=true, urlcolor=
black, linkcolor=
black,citecolor=
black, pagebackref=true]{hyperref}
\usepackage{bookmark}
\usepackage{amsmath, amscd}
\usepackage{subfigure}
\usepackage{scalerel,stackengine}
\usepackage{epigraph}
\usepackage{placeins}
\usepackage[marginparwidth=2cm]{geometry}
\usepackage{nicefrac}
\usepackage{todonotes}
\usepackage[normalem]{ulem}

\newtheorem{lemma}{Lemma}[section]
\newtheorem{thm}[lemma]{Theorem}
\newtheorem*{thm*}{Theorem}
\newtheorem{prop}[lemma]{Proposition}
\newtheorem{cor}[lemma]{Corollary}
\newtheorem*{cor*}{Corollary}

\theoremstyle{definition}
\newtheorem{defn}[lemma]{Definition}

\theoremstyle{remark}
\newtheorem{rem}[lemma]{Remark}

\newcounter{todocounter}

\newcommand{\sm}{\smallsetminus}
\renewcommand{\setminus}{\smallsetminus}

\newcommand{\matR}{\mathbb{R}}
\newcommand{\matQ}{\mathbb{Q}}
\newcommand{\matZ}{\mathbb{Z}}
\newcommand{\matC}{\mathbb{C}}
\newcommand{\matP}{\mathbb{P}}
\newcommand{\matH}{\mathbb{H}}
\newcommand{\matS}{\mathbb{S}}
\newcommand{\matE}{\mathbb{E}}
\newcommand{\matRP}{\mathbb{RP}}
\newcommand{\matCP}{\mathbb{CP}}

\newcommand{\matK}{\mathbb{K}}
\newcommand{\matO}{\mathbb{O}}
\newcommand{\matF}{\mathbb{F}}
\newcommand{\matL}{\mathbb{L}}

\newcommand{\calR}{\mathcal{R}}

\newcommand{\calL}{\mathcal{L}}

\newcommand{\calP}{\mathcal{P}}

\newcommand{\SO}{\mathrm{SO}}

\newcommand{\SL}{\mathrm{SL}}
\newcommand{\GL}{\mathrm{GL}}
\newcommand{\PGL}{\mathrm{PGL}}
\newcommand{\PU}{\mathrm{PU}}

\newcommand{\Or}{\mathrm{O}}

\newcommand{\Isom}{\mathrm{Isom}}

\newcommand{\Vol}{\mathrm{Vol}}
\newcommand{\PSL}{\mathrm{PSL}}

\newcommand{\id}{\mathrm{id}}

\newcommand{\Aut}{\mathrm{Aut}}

\newcommand{\e}{\mathrm{e}}

\newcommand{\w}[1]{{\color{white} #1}}

\makeatletter
\newcommand\xleftrightarrow[2][]{\ext@arrow 0099{\longleftrightarrowfill@}{#1}{#2}}
\def\longleftrightarrowfill@{\arrowfill@\leftarrow\relbar\rightarrow}
\makeatother

\author{Stefano Riolo}
\address{Dipartimento di Matematica, Universit\`a di Bologna \newline
         Piazza di Porta San Donato 5, 40126 Bologna, Italy}
\email{stefano\w{0}.\w{0}riolo\w{0}@unibo.it}
\urladdr{\href{https://www.dm.unibo.it/~stefano.riolo}{www.dm.unibo.it/~stefano.riolo}}

\author{Andrea Seppi}
\address{Dipartimento di Matematica ``Giuseppe~Peano'', Universit\`a di Torino \newline Via Carlo Alberto 10, 10123 Torino, Italy}
\email{andrea\w{0}.\w{0}seppi\w{0}@unito.it}
\urladdr{\href{https://andreaseppi.github.io}{andreaseppi.github.io}}

\author{Leone Slavich}
\address{Dipartimento di Matematica (DIMA), Universit\`a di Genova \newline Via Dodecaneso 35, 16146 Genova, Italy}
\email{leone\w{0}.\w{0}slavich\w{0}@unige.it}
\urladdr{\href{https://leoneslavich.github.io/}{leoneslavich.github.io}}

\title[Convex projective manifolds, symmetric spaces and decompositions]{Convex projective manifolds, symmetric spaces and geometric decompositions}

\begin{document}

\begin{abstract}
We prove that if a closed, 
indecomposable, properly convex real projective 4-manifold is geometric or admits a geometric decomposition in the sense of Thurston, then every piece  is real hyperbolic. This extends a theorem of Benoist to dimension four. Moreover, we build orientable (non-hyperbolic) 4-manifolds of the above type, with arbitrary positive even Euler characteristic. 
Finally, we characterise the  compact locally symmetric spaces that virtually support properly convex real projective structures in terms of their geometry. 
\end{abstract}

\maketitle

\section{Introduction} 
This work addresses some geometric and topological aspects of
closed manifolds admitting a convex projective structure. These aspherical manifolds, which include hyperbolic manifolds and tori as special examples, share some remarkable analogies with non-positively curved 
manifolds. 

A \emph{convex projective manifold} is the quotient of a properly convex open set $\Omega \subset \matRP^n$ by a discrete torsion-free group $\Gamma < \PGL_{n+1}(\matR)$ preserving $\Omega$. It is \emph{indecomposable} if, roughly speaking, 
$\Omega$ is not the convex hull of some disjoint lower-dimensional domains
. A \emph{convex projective structure} on a manifold $M$ is a diffeomorphism between $M$ and a convex projective manifold $\Omega/\Gamma$. For simplicity, unless otherwise stated, all convex projective manifolds in this introduction are orientable.

The topology of these manifolds is quite well understood in dimension up to three. Indeed, a closed surface of genus $g$ admits a convex projective (indecomposable) structure if and only if $g \geq 1$ (resp. $g \geq 2$). Moreover, Benoist \cite{B4} proved that a closed, 
indecomposable, convex projective 3-manifold has only hyperbolic pieces in its JSJ decomposition.
We thus focus on manifolds of dimension at least four, and more precisely on two classes: four-manifolds that are geometric or admit a geometric decomposition in the sense of Thurston, and locally symmetric manifolds of arbitrary dimension. 
%

\subsection{Four-dimensional geometries and decompositions} \label{sec:intro-decompositions}

In four dimensions, we extend Benoist's theorem as follows:

\begin{thm} \label{thm:geom-4d} 
    If a closed, indecomposable, convex projective $4$-manifold admits a (possibly trivial) geometric decomposition, then all the pieces are real hyperbolic.
\end{thm}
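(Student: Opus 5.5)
The plan is to go through Thurston's list of four-dimensional geometries, which Filipkiewicz classified, and exclude every geometry other than $\matH^4$ as a possible piece. The constraint we use is Benoist's: if $M=\Omega/\Gamma$ is closed and indecomposable, then $\Omega$ is strictly convex or $\Gamma$ is not Gromov hyperbolic. In either case every virtually nilpotent, in particular every virtually abelian, subgroup of $\Gamma$ is virtually $\matZ^k$. Moreover, the normaliser of an infinite abelian subgroup is constrained. We use Benoist's theorem that indecomposability forces the virtual centre of $\Gamma$ to be trivial. We also use the fact that any virtually nilpotent subgroup of a convex projective group is virtually abelian and acts on a properly embedded simplex or a lower-dimensional convex subset.

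The first step excludes the non-hyperbolic pieces through their fundamental groups. Suppose a piece $N$ carries the geometry $X$, where $X \neq \matH^4$. By the structure of geometric decompositions, $\pi_1 N$ injects into $\Gamma$, since the boundary components are infranil or aspherical with injective fundamental groups. We split into cases.

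\emph{Case (a): nilpotent and solvable geometries.} These are $\matE^4$, $\mathrm{Nil}^4$, $\mathrm{Nil}^3\times\matE^1$, $\mathrm{Sol}^4_{m,n}$, $\mathrm{Sol}^4_1$ and $\mathrm{Sol}^3\times\matE^1$. Here $\pi_1 N$ is virtually polycyclic. Benoist showed that such a subgroup of $\Gamma$ is virtually abelian. That rules out every geometry in this case except $\matE^4$. For $\matE^4$, the piece would be the whole of $M$, so $M$ would be finitely covered by a torus, which is not indecomposable.

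\emph{Case (b): product geometries with a Euclidean or circle factor.} These are $\matH^3\times\matE^1$, $\widetilde{\SL_2}\times\matE^1$, $\matH^2\times\matE^2$ and $\matS^2\times\ldots$; spherical factors are excluded by asphericity, and $\mathrm{Nil}^3\times\matE^1$ was already handled in case (a). Here $\pi_1 N$ has a nontrivial virtually central $\matZ$ or $\matZ^2$ subgroup. We show that a closed or finite-volume piece with such a centre inside $\Gamma$ contradicts indecomposability. If $N=M$, this follows directly from Benoist's theorem on the virtual centre. If $N$ is a proper piece, we argue on the $\matZ$-fibre. Its centraliser contains $\pi_1 N$, which is not virtually abelian. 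The centraliser of an element in $\Gamma$ preserves the fixed subspaces of that element, and by the Benoist/Cooper–Long–Tillmann structure of maximal-rank abelian subgroups, a non-virtually-abelian centraliser would force a properly embedded product or cone decomposition of $\Omega$. That in turn forces $M$ to be decomposable.

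\emph{Case (c): the remaining higher-rank symmetric geometries and $\matH^2\times\matH^2$, $\matH^2_\matC$, $\mathrm{F}^4$.} For $\matH^2\times\matH^2$, the group contains $\matZ^2$ subgroups whose centraliser in $\pi_1 N$ is not virtually abelian. A reducible lattice also contains a product $F_2\times\matZ$, so the argument of case (b) applies. For $\matH^2_\matC$ and irreducible lattices in $\matH^2\times\matH^2$, pieces are closed, so $N=M$. We then need that a cocompact lattice of $\PU(2,1)$, or an irreducible lattice of $\PSL_2\matR^2$, does not divide a properly convex domain of $\matRP^4$. Here we use superrigidity or local rigidity together with dimension counting: Zimmer/Benoist-type results say that a divisible domain with Zariski-dense group is a symmetric space or has Zariski-dense group in $\SL_5$. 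Margulis superrigidity forbids a Zariski-dense representation into $\SL_5\matR$ for rank-two irreducible lattices and for $\PU(2,1)$ by Corlette's archimedean rigidity in the Hermitian case. Then $\Gamma$ would preserve a symmetric cone, whose possibilities in $\matRP^4$ are only $\matH^4$ or decomposable ones. $\mathrm{F}^4$ has no compact quotients and cusped pieces are handled as in case (b).

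The main obstacle will be case (b) and the handling of proper pieces, namely controlling centralisers of elements inside a non-strictly convex $\Omega$ without assuming the piece is closed. I would first try to use the result of Cooper–Long–Tillmann together with Benoist's 3-dimensional technique, which says that a $\matZ^2$ with large centraliser spans a properly embedded triangle whose stabiliser is virtually abelian. This forces every element of the centraliser of the central $\matZ$ to be virtually in $\matZ^2$.
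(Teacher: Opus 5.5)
Your skeleton is the same as the paper's: Filipkiewicz's list, ``virtually solvable $\Rightarrow$ virtually abelian'' for the solvable geometries, the Benoist--Vey centre theorem for closed manifolds with a Euclidean factor, and Benoist density plus Margulis superrigidity for irreducible $\matH^2\times\matH^2$. There are, however, genuine gaps.

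\textbf{The $\matH^2(\matC)$ case and related cases.} Corlette's archimedean superrigidity covers only $\mathrm{Sp}(n,1)$ and $F_4^{-20}$. Lattices in $\PU(2,1)$ are not superrigid (non-arithmetic ones exist), so nothing you say excludes a discrete Zariski-dense representation into $\PGL_5(\matR)$. This is exactly Benoist's open question that the theorem settles. The missing idea is topological. By Avez--Kobayashi, every real projective manifold has vanishing rational Pontryagin classes, so $\sigma(M)=0$ by the Hirzebruch signature theorem. A closed complex hyperbolic surface (here $M$ itself, up to a finite cover) has instead $|\sigma|=\chi/3\neq 0$ by Hirzebruch proportionality.

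Your claim that $\matH^2(\matC)$ and irreducible $\matH^2\times\matH^2$ pieces must be closed is false: cusped ball quotients and Hilbert modular surfaces exist. These cusped pieces, like $\matF^4$ pieces, are excluded because their cusp groups are virtually nilpotent or solvable but not virtually abelian. That uses the Islam--Zimmer theorem you already invoke in case (a).

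A closed reducible $\matH^2\times\matH^2$ manifold has trivial virtual centre, so ``case (b)'' does not cover it. One uses instead that a product of two non-trivial groups cannot divide an indecomposable domain. Its Zariski closure would be a simple group (Benoist density, or Borel density in the symmetric case) generated by two commuting normal subgroups.

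\textbf{Proper pieces.} Your treatment rests on the claim that a non-virtually-abelian centraliser forces a cone or product decomposition of $\Omega$, hence of $M$. This is false. Indecomposable divisible domains can contain properly embedded cones whose stabilisers are virtually $\matZ\times\pi_1(\text{closed hyperbolic manifold})$ (Blayac--Viaggi). In the reducible $\matH^2\times\matH^2$ case (pieces virtually $F_i\times B_i$, with $F_i$ cusped and $B_i$ closed), the centraliser of $g\in\pi_1(F_i)$ really is virtually $\langle g\rangle\times\pi_1(B_i)$ and spans such a cone. The triangle/$\matZ^2$ technique you propose is three-dimensional and does not transfer: cusp groups here are $\matZ^3$, or the non-abelian $\matZ\times\pi_1(B_i)$.

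The paper uses Islam--Zimmer's centraliser theorem: $C_\Gamma(g)$ divides a slice $\Omega\cap\matP(V)$ of some dimension $d$. Then $d\geq 3$, because $C_\Gamma(g)$ contains a subgroup of cohomological dimension $3$ (a $\matZ^3$ cusp group, resp.\ $\langle g\rangle\times\pi_1(B_i)$). Also $d\neq 4$, since otherwise $C_\Gamma(g)$ would divide $\Omega$ with infinite centre.

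For $\matH^3\times\matE^1$, $\matH^2\times\matE^2$ and $\widetilde{\matS\matL}\times\matE^1$ pieces, $d=3$ makes the cusp $\matZ^3$ of finite index in the $3$-dimensional Poincar\'e duality group $C_\Gamma(g)$. This contradicts the fact that $C_\Gamma(g)$ contains a group surjecting onto a lattice of $\PSL_2(\matR)$ or $\PSL_2(\matC)$.

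For reducible $\matH^2\times\matH^2$ pieces, $d=3$ gives no contradiction by itself. Benoist--Vey yields $\Omega\cap\matP(V_g)=\{p\}\ast(\Omega\cap\matP(W_g))$, with $\pi_1(B_i)$ acting irreducibly on the $3$-dimensional $W_g$. One must then show that $W_g$ is independent of $g$, hence preserved by $\Gamma_i'$. Through the cusp identifications it is then preserved by all of $\Gamma$, contradicting the irreducibility of $\Gamma$ on $\matR^5$. Neither of these steps appears in your proposal.
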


A manifold $M$ is said to admit a \emph{geometric decomposition} if it can be cut along a (possibly disconnected
) hypersurface into pieces whose interior is \emph{geometric}, that is admits a complete, finite-volume, locally homogeneous, Riemannian metric. The decomposition is \emph{trivial} if the hypersurface is empty; in other words, $M$ is geometric.

Theorem \ref{thm:geom-4d} is new even 
in the geometric case. Indeed, Benoist \cite[Problem 4]{Bquest} proposed to prove that the fundamental group $\Gamma$ of a closed convex projective manifold $M = \Omega/\Gamma$ is not isomorphic to any lattice in $\PU(2,1)$. Equivalently \cite{C}, no divisible properly convex domain $\Omega$ with its Hilbert metric is quasi-isometric to the complex hyperbolic space $\matH^2(\matC)$. We provide a simple solution to this problem, not even relying on the convexity of the projective structure: since, as shown by Avez and Kobayashi \cite{A,Kob}, the rational Pontryagin classes of any $(\PGL_{n+1}(\matR), \matRP^n)$-manifold are trivial, the 4-manifold $M$ has signature 
$\sigma = 0$ by the Hirzebruch signature theorem. Closed complex hyperbolic surfaces are instead well known to have $\sigma > 0$. 


Admitting a geometric decomposition is a very restrictive 
requirement for a 4-manifold \cite{Hpaper}. Such hypothesis cannot be removed in Theorem \ref{thm:geom-4d}, as there are closed, indecomposable, convex projective $n$-manifolds, $n\geq4$, that do not satisfy it. Among them, there are Benoist's examples \cite{Bqi}, many Gromov--Thurston manifolds provided by Kapovich \cite{K}, some Dehn fillings of cusped hyperbolic manifolds by Choi, Lee and Marquis \cite{CLM2} (see also \cite{LMR}), and the manifolds more recently built by Blayac and Viaggi \cite{BV}. Indeed, if an $n$-manifold $M$ admits a (possibly trivial) geometric decomposition into real hyperbolic pieces, then $\pi_1(M)$ is hyperbolic relatively to a (possibly empty) collection of abelian subgroups of rank $n-1$ (
the cusp groups). The manifolds of the latter two classes do not have this property, while the former have hyperbolic $\pi_1$ but are not locally symmetric (the first ones by \cite{Bqi} combined with Theorem \ref{thm:geom-4d}). 

On the other hand, for all $n \leq 8$, there exist closed convex projective $n$-manifolds admitting non-trivial geometric decompositions with real hyperbolic pieces
. This is shown in Ballas--Danciger--Lee \cite{BDL} and Blayac--Viaggi \cite{BV} for $n=3$, and follows from the work of Choi--Lee--Marquis \cite{CLM1} on convex projective Coxeter truncation polytopes for $n \geq 4$
. One of these polytopes has a role in this paper; see Section \ref{sec:intro-geography} below.

\vspace{-0.04cm}

\subsection{Non-positively curved symmetric spaces} \label{sec:intro-symmetric} Thurston geometries are classified in dimension up to 5 \cite{F,Geng,Tbook}, and our proof of Theorem \ref{thm:geom-4d} relies on the classification \cite{F} (see Section \ref{sec:proof-thm-1}). Our approach does not extend directly in dimension 5, not even in the geometric case.
On the other hand, it is natural to restrict the attention to the  subclass of non-positively curved (equivalently, aspherical) locally symmetric manifolds
. We obtain the following characterisation:

\begin{thm}\label{teo:NPC-locally-symmetric-intro}
    Let $Y$ be a symmetric space and $\Lambda < \Isom(Y)$ be a cocompact lattice. The following are equivalent:
    \begin{itemize}
        \item $\Lambda$ has a torsion-free subgroup $\Lambda'$ of finite index such that the manifold $M = Y /\Lambda'$ admits a convex projective structure;
        \item the de Rham decomposition of $Y$ 
        is $Y \cong X_1 \times  \dots \times X_r \times \matE^d$, where $r \leq d+1$ and each $X_i$ is the symmetric space of non-compact type associated to one of the simple Lie groups $\mathrm{SO}^+_{m,1}(\mathbb{R})$, 
        $\mathrm{SL}_m(\matR)$, $\mathrm{SL}_m(\matC)$, $\mathrm{SL}_m(\matH)$, or $E_6^{-26}$.
    \end{itemize}
\end{thm}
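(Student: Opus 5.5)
We will prove both implications by combining Benoist's structure theory of divisible convex sets with rigidity results for lattices in semisimple groups. The basic input is Benoist's decomposition theorem. If $\Omega/\Gamma$ is a closed convex projective manifold, then $\Omega$ is the projectivisation of a product of cones $C_1\times\dots\times C_k$ over indecomposable divisible convex sets $\Omega_i$, some of which may be points, i.e.\ $C_i$ a half-line. Moreover, $\Gamma$ has a finite-index subgroup of the form $\matZ^{k-1}\times\Gamma_1\times\dots\times\Gamma_s$, where each $\Gamma_i$ divides the positive-dimensional factor $\Omega_i$ and $\matZ^{k-1}$ acts by independent diagonal scalings. Benoist's dichotomy then says that each indecomposable $\Omega_i$ is one of two kinds:
\begin{itemize}
\item either $\Omega_i$ is a symmetric divisible convex set, namely the projective model of $\matH^m$ or the cone of positive definite Hermitian matrices over $\matR,\matC,\matH$ (or $3\times 3$ over $\matO$), with automorphism groups $\SO^+_{m,1}(\matR)$, $\SL_m(\matR)$, $\SL_m(\matC)$, $\SL_m(\matH)$, $E_6^{-26}$;
\item or $\Gamma_i$ is Zariski dense in $\SL_{n_i+1}(\matR)$.
\end{itemize}

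\emph{Sufficiency.} Given $Y\cong X_1\times\dots\times X_r\times\matE^d$ with the $X_i$ in the list, we first pass to a torsion-free finite-index subgroup of $\Lambda$ (Selberg). We then want to arrange that it virtually splits as $\Lambda_1\times\dots\times\Lambda_r\times\matZ^d$ with $\Lambda_i<\Isom(X_i)$ cocompact. Once this holds, we take the cones $C_i$ over the symmetric convex sets $\Omega_i$ associated to $X_i$, together with $d+1-r\geq 0$ extra half-lines. The projectivised product cone is divided by $\Lambda_1\times\dots\times\Lambda_r\times\matZ^{d}$, where $\matZ^d$ acts by diagonal scalings, and the quotient is diffeomorphic to $\prod X_i/\Lambda_i\times T^d$, i.e.\ to $Y/\Lambda'$. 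The delicate point here is the virtual splitting. One can first split off the Euclidean factor (Bieberbach/Auslander: the translation part is virtually a central $\matZ^d$ and the projection to the semisimple part is a cocompact lattice). One would then have to exclude, or handle separately, irreducible lattices in products $X_i\times X_j$. I expect this forces a reading of the statement in which irreducible products are not allowed. Indeed, the necessity argument below shows such irreducible lattices never carry convex projective structures, so I would check this point against the precise formulation.

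\emph{Necessity.} Let $M=Y/\Lambda'\cong\Omega/\Gamma$. By the structure theorem, $\Gamma\cong\Lambda'$ virtually has centre containing $\matZ^{k-1}$. The Euclidean de Rham factor accounts for the virtual centre $\matZ^d$ of $\Lambda'$, so $k-1\leq d$. The quotient of $\Lambda'$ by its virtual centre is a cocompact lattice in $\prod\Isom(X_j)$, and it is virtually isomorphic to $\Gamma_1\times\dots\times\Gamma_s$. By Margulis' normal subgroup theorem, together with the fact that an irreducible lattice is not virtually a product of infinite groups, the irreducible components of this lattice correspond to the $\Gamma_i$. Hence each $\Gamma_i$ is a cocompact irreducible lattice in $\prod_{j\in J_i}\Isom(X_j)$.

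The main obstacle is to show that each such $\Gamma_i$ sits in a single $X_j$ from the list. We split into cases according to the rank of $\prod_{j\in J_i}\Isom(X_j)$.
\begin{itemize}
\item \emph{Higher rank.} Suppose the product has higher rank and $\Gamma_i$ is Zariski dense in $\SL_{n_i+1}(\matR)$. Margulis superrigidity extends the inclusion $\Gamma_i\to\SL_{n_i+1}(\matR)$ to the ambient semisimple group, up to compact error. The Zariski closure is then the image of that group, which is not all of $\SL_{n_i+1}(\matR)$ unless the group is itself $\SL$. This contradicts the dichotomy, so $\Omega_i$ is symmetric. In that case the lattice $\Gamma_i$ determines its ambient group by Mostow rigidity, so $|J_i|=1$ and $X_j$ is in the list.
\item \emph{Rank one: $Sp(m,1)$ and $F_4^{-20}$.} Corlette's superrigidity gives the same contradiction.
\item \emph{Rank one: $\SU(m,1)$ with $m\geq2$.} Here I would use the Avez--Kobayashi vanishing of rational Pontryagin classes on $M$, which holds since $M$ is itself a $(\PGL,\matRP^n)$-manifold. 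By Chern--Weil theory for the locally symmetric metric and Hirzebruch proportionality, the Pontryagin forms of $M$ are parallel and correspond to those of the compact dual. For the factor $\matCP^m$ they are nonzero, and multiplicativity in the product shows the factor $p_1(\matCP^m)\otimes 1$ survives. This contradicts the vanishing.
\end{itemize}
Finally, the remaining rank-one possibility is $\SO^+_{m,1}(\matR)$, which is allowed. The count $r\leq s\leq k\leq d+1$ then gives the stated bound.
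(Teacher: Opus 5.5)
Your argument rests on a structure theorem that is false. Vey and Benoist do not say that a group dividing $\Omega=\Omega_1*\dots*\Omega_k$ is virtually $\matZ^{k-1}\times\Gamma_1\times\dots\times\Gamma_s$ with each $\Gamma_i$ dividing $\Omega_i$. What they give (Theorems~\ref{teo:Vey} and~\ref{teo:Benoist}) is only that the centre is a lattice in the independent homotheties, that each $C_i$ is divisible by \emph{some} group, and that the Zariski closure splits as $H_1\times\dots\times H_k$.

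Here is a counterexample. Let $\Gamma<\PSL_2(\matR)\times\PSL_2(\matR)$ be a torsion-free irreducible cocompact lattice. Let it act on $\matH^2*\matH^2=\matP(\calL_3\oplus\calL_3)\subset\matRP^5$ through $\Aut(\matH^2)\times\Aut(\matH^2)$, and add the $\matZ$ generated by the class of $\lambda\,\id\oplus\id$ with $\lambda>1$. Then $\Gamma\times\matZ$ divides $\matH^2*\matH^2$, with quotient $(\matH^2\times\matH^2)/\Gamma\times S^1$. Yet it is not virtually $\matZ\times\Gamma_1\times\Gamma_2$, because $\Gamma$ is not virtually a product. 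This is exactly the kind of example built in Section~\ref{sec:construction}, and it is covered by the statement: $Y=\matH^2\times\matH^2\times\matE^1$ has $r=2\le d+1$.

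So your claim that irreducible lattices in products never carry such structures is wrong. The reinterpretation you suggest is also unavailable: the second condition depends on $Y$ alone, so the theorem is about \emph{every} cocompact $\Lambda<\Isom(Y)$. Your sufficiency direction is therefore incomplete. The missing construction goes as follows. By Eberlein, $\Lambda$ is virtually a $\rho$-twisted extension of $\Gamma=p_1(\Lambda)$ by $\matZ^d$. Realise each irreducible factor $\Gamma_i<F_1\times\dots\times F_q$ inside $\Aut(\Omega_{i_1})\times\dots\times\Aut(\Omega_{i_q})$ via $F_j\cong\Aut(\Omega_{i_j})^\circ$. Join all these domains with $d+1-r$ points, and let the centre act as a lattice in $\matP(\matR_+^{d+1})\cong\matR^d$ with the same twist $\rho$. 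The centre fills the $d$ homothety directions of a join of $d+1$ factors that $\Gamma$ cannot fill, and this is where $r\le d+1$ comes from.

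The same false input breaks your necessity argument. The step ``each $\Gamma_i$ divides $\Omega_i$, so $|J_i|=1$ by Mostow'' has a false conclusion: in the example, one irreducible factor spans two de Rham factors. The paper argues instead as follows.
\begin{itemize}
\item Vey plus Eberlein give exactly $d+1$ indecomposable factors, with the centre a lattice in $\matP(\matR_+^{d+1})$. Hence $\Gamma$ embeds discretely in $\prod_j\Aut(\Omega_j)$ and acts cocompactly on $\prod_j\Omega_j$.
\item The Zariski closures $Z_i$ of the irreducible factors $\Gamma_i$ are products over pairwise disjoint sets of the $H_j$, since they commute and the $H_j$ are non-abelian.
\item If $G_i$ is not simple, superrigidity gives $G_i\cong Z_i^\circ$. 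The projections of $\Gamma_i$ to the simple factors are dense, so the corresponding $\Aut(\Omega_j)$ are non-discrete, and the $\Omega_j$ are symmetric by Remark~\ref{oss:non-simmetrico-discreto}.
\item If $G_i$ is simple of higher rank, $\mathrm{Sp}(m,1)$ or $F_4^{-20}$, then $Z_i$ is a single $H_j$, $\Gamma_i$ divides $\Omega_j$, and the non-compact-type case applies. There you still need a dimension count to exclude $G_i\cong\PSL_{n_j+1}(\matR)$, which your ``unless the group is itself $\SL$'' leaves open.
\end{itemize}
This matching of simple factors of $\Isom(X)^\circ$ with positive-dimensional $\Omega_j$'s yields both the list and the bound $r\le d+1$.

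Your treatment of $\matH^m(\matC)$ factors, by contrast, is sound and more direct than the paper's. The Pontryagin form $p_1$ of the locally symmetric metric has a non-zero component along such a factor, by proportionality with $p_1(\matCP^m)\neq 0$ for $m\geq 2$. Being parallel, it is harmonic on the compact $M$, so $p_1(M)\neq 0$. This contradicts Avez--Kobayashi without any reduction to a single factor.
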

(Here $\matE^d$ denotes the Euclidean space of dimension $d$ as a symmetric space. The exceptional Lie group $E_6^{-26}$, in Helgason's notation \cite{Helgason}, may be seen as a sort of $\SL_3$ over the octonions.) The convex projective structure is indecomposable exactly when $d=0$, i.e. the symmetric space $Y = X_1$ is irreducible (and of non-compact type).

Theorem \ref{teo:NPC-locally-symmetric-intro} contains as a special case the solution to Benoist's \cite[Problem 4]{Bquest}. It characterises the compact locally symmetric spaces that virtually admit a convex projective structure in terms of the geometry of their universal cover $Y$:
this happens precisely when $Y$ has a properly convex projective model (see Section \ref{sec:projective-models}). As proved by Vinberg \cite{V} in the 1960s, in the case $d=0$, the symmetric space $Y = X_1$ has a convex projective model precisely when it is associated to one of the simple Lie groups of Theorem \ref{teo:NPC-locally-symmetric-intro}. 

The proof of Theorem \ref{teo:NPC-locally-symmetric-intro} carries more information than the sole statement reproduced here. Loosely speaking, it shows that all convex projective structures on compact locally symmetric spaces are built via a standard procedure from some basic ingredients. These consist of real hyperbolic lattices that divide indecomposable, non-symmetric, properly convex domains, and irreducible lattices in semisimple Lie groups that act cocompactly on a product of indecomposable, symmetric, properly convex domains. These actions are then ``combined'' with a representation of $\mathbb{Z}^d<\mathbb{R}^d$ in the group of independent positive homotheties in the cones above these domains (together with possible extra factors coming from cones over points). We refer the reader to Section \ref{sec:spazi-simmetrici} (in particular to Theorem \ref{teo:ostruzioni-NPC-localmente-simmetrici}) for the details.


\subsection{Euler characteristic of four-manifolds} \label{sec:intro-geography}

Let us now come back to dimension four. Two important homotopy invariants for 
4-manifolds are the Euler characteristic $\chi$ and the signature $\sigma$ of the intersection form. Given a class of closed oriented 4-manifolds, the \emph{geography problem} consists in determining the pairs $(\chi,\sigma) \in \matZ^2$ that are realised by manifolds in that class. This is a classical problem in several contexts, like complex surfaces, symplectic manifolds, Kähler manifolds, Einstein manifolds, manifolds of positive scalar curvature, and many others. 

Let $M$ be a closed oriented convex projective $4k$-manifold. As we already observed $\sigma(M) = 0$, so by the Poincar\'e duality $\chi(M)$ is even
. This reduces the geography problem for convex projective 4-manifolds to realising or excluding some even integers as the Euler characteristics of such manifolds.
We build closed 
convex projective 4-manifolds of any possible positive Euler characteristic:

\begin{thm} \label{thm:construction} 
    Every positive even integer is the Euler characteristic of a closed, orientable, convex projective $4$-manifold.
\end{thm}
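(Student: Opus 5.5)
Here is the plan. First build a single closed orientable convex projective $4$-manifold $M_0$ with $\chi(M_0)=2$. Then show that the set of Euler characteristics realised contains all positive even integers, using finite covers and gluing constructions.

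The natural source for $M_0$ is the convex projective Coxeter truncation polytope of Choi--Lee--Marquis mentioned in Section~\ref{sec:intro-decompositions}. Take a compact $4$-dimensional Coxeter polytope $P$ (a truncation polytope) whose reflection group $W$ divides a properly convex domain $\Omega\subset\matRP^4$. Then:
\begin{itemize}
\item By Selberg's lemma, $W$ contains a torsion-free finite-index subgroup. Using a colouring of the facets of $P$ (a homomorphism $W\to(\matZ/2)^k$), one gets an explicit torsion-free subgroup $\Gamma$ whose quotient $\Omega/\Gamma$ is tessellated by $2^k$ copies of $P$.
\item The orbifold Euler characteristic $\chi^{orb}(P)$ is computable combinatorially from the face lattice and the orders of the finite face stabilisers: $\chi^{orb}=\sum_F (-1)^{\dim F}/|W_F|$. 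Then $\chi(\Omega/\Gamma)=[W:\Gamma]\,\chi^{orb}(P)$.
\item Choosing $P$ and the colouring carefully, ideally with $\chi^{orb}(P)$ equal to $1/2^{k-1}$ for a $k$-colouring, should yield $\chi=2$. The colouring can be chosen orientable by requiring all colours to have odd weight, so that $\Gamma$ lies in the orientation-preserving subgroup.
\end{itemize}

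Next, produce every even value $2n$. Finite covers only give multiples $2n$ of $\chi(M_0)$, and a degree-$n$ cover realises $2n$ only if $\pi_1(M_0)$ has a subgroup of every index $n$. I would therefore secure such subgroups directly. If $\pi_1(M_0)$ surjects onto $\matZ$, which I would verify from the colouring (for example by exhibiting a nonzero first Betti number), then composing with $\matZ\to\matZ/n$ gives a cyclic $n$-fold cover. That cover is again closed, orientable and convex projective, since its fundamental group is a finite-index subgroup of $\Gamma$ dividing the same $\Omega$, and its Euler characteristic is $2n$. This yields every positive even integer.

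Failing a virtual $\matZ$ quotient in the chosen group, I would construct one. The idea is to find a totally geodesic (projectively, a hyperplane-stabilised) hypersurface in $M_0$ coming from a facet or a mirror of $P$. Cutting along it and gluing $n$ copies cyclically produces the $n$-fold cyclic cover, because the nonseparating hypersurface is dual to a class in $H^1(M_0;\matZ)$. This step needs the hypersurface to be nonseparating, which can be arranged by a suitable choice of colouring.

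I expect the main obstacle to be the first step: finding a polytope and colouring with Euler characteristic exactly $2$, rather than some larger even number. The computation of $\chi^{orb}$ is routine, but the right combinatorics is delicate. If only a manifold with $\chi=2m$ is found, it would need a separate construction for the intermediate values, for instance a second manifold with coprime Euler characteristic combined with covers, although covers alone do not add values. So I would insist on finding $\chi=2$, possibly by passing to an index-two orientation or quotient argument that halves the Euler characteristic.

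The remaining verifications are standard: properness and convexity are inherited by finite covers, and orientability is preserved.
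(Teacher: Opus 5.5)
Your second step is the paper's: once you have a closed orientable convex projective $M$ with $\chi(M)=2$ and a non-separating hypersurface (equivalently a surjection $\pi_1(M)\to\matZ$), cyclic covers of every degree give all positive even integers. The gap is in the first step, which is the actual content of the theorem. You do not construct the $\chi=2$ manifold, and the mechanism you propose cannot construct it.

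The kernel of a homomorphism $W\to(\matZ/2)^k$ is torsion-free only if every finite special subgroup injects into an elementary abelian $2$-group. That forces every dihedral angle at a compact face to be $\pi/2$, since $\langle s_i,s_j\rangle$ with $m_{ij}\ge 3$ is already non-abelian. A truncation polytope is never right-angled. For the Choi--Lee--Marquis prism $P$ used in the paper, the vertex groups are $W(D_4)$, $\matZ/2\times\mathfrak{S}_4$ and $\mathfrak{S}_5$. There is a stronger obstruction: since $\chi^{\mathrm{orb}}(P)=1/960$, reaching $\chi=2$ needs a torsion-free subgroup of index exactly $1920$. By Remark~\ref{rem:not-normal-cover}, $\Gamma_P$ has no torsion-free \emph{normal} subgroup of index $\le 1920$. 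So no kernel of any homomorphism out of $\Gamma_P$ works, whether a colouring or not.

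Switching to genuinely right-angled polytopes does not rescue the plan unless you exhibit one that is a compact convex projective Coxeter polytope with $2^k\chi^{\mathrm{orb}}(P)=2$. For example, pentagon $\times$ hexagon has $\chi^{\mathrm{orb}}=1/8$. However, its colouring manifolds have fundamental groups that are virtually products of surface groups, so they admit no convex projective structure by Corollaries~\ref{cor:reducible-implies-decomposable} and~\ref{cor:not-product}. Your fallback ``index-two quotient halving $\chi$'' has no mechanism behind it.

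The missing idea is to build a \emph{non-normal} torsion-free subgroup using the cusped hyperbolic simplex $S=P\sm F_0$. The paper proceeds in four steps.
\begin{enumerate}
\item Take the kernel $\Gamma_1$ of a surjection $\Gamma_S\to W(D_5)$ that is injective on the four finite vertex groups. This gives a cusped hyperbolic $M_1$ with $\chi(M_1)=2$ and $10$ cusps.
\item Extend the surjection to $\Gamma_P$ by $\gamma_0\mapsto 1$. This yields a convex projective manifold with $10$ mirror boundary components.
\item Pair these boundary components using orientation-reversing involutions in the deck group $W(D_5)$. Using deck transformations as gluing maps is what guarantees that the result is again $\Omega/\Gamma$ with $\Gamma<\Gamma_P$ of index $1920$; arbitrary projective gluings need not give a convex projective structure.
\item Since the complement of the glued hypersurface is the single connected piece $M_1$, that hypersurface is non-separating. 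This supplies exactly the $\matZ$-quotient your second step requires.
\end{enumerate}
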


The examples we build do admit a non-trivial geometric decomposition, so they provide a new class of manifolds satisfying the conditions of Theorem \ref{thm:geom-4d}. An essential ingredient is a convex projective reflection group found by Choi, Lee and Marquis \cite{CLM1}.

The sometimes called ``Hopf--Thurston conjecture'' predicts that any closed aspherical $2m$-manifold $M$ satisfies $(-1)^m \cdot \chi(M) \geq 0$. Closed, decomposable, convex projective manifolds, like tori, have $\chi = 0$ \cite[Fact 2.15]{LMR}. So, {if the conjecture were true} in dimension four
, Theorem \ref{thm:construction} would completely determine the spectrum of the Euler characteristics of closed orientable convex projective 4-manifolds
: the even natural numbers. 

Let us briefly mention what was previously known in regard. {For non-positively curved locally symmetric spaces the conjecture is well known to hold true. In the real hyperbolic case, we have indeed $\chi(M^{2m}) = (-1)^m C_m \cdot \Vol(M^{2m})$ with $C_m > 0$ by the Chern--Gauss--Bonnet theorem}. The volume spectrum (equivalently, the actual values of $\chi$) of closed hyperbolic 4-manifolds is currently unknown. The smallest known examples have $\chi = 16$ \cite{CM,L}. The unique non-hyperbolic closed convex projective 4-manifold with previously known $\chi > 0$ has smaller $\chi = 12$ \cite{LMR}. It has been found by Dehn filling a cusped hyperbolic manifold following \cite{CLM2,KS}. (These manifolds cover non-orientable hyperbolic and convex projective manifolds with $\chi = 8$ and $6$, respectively.) 

\subsection{The proof of Theorem \ref{thm:geom-4d}} \label{sec:proof-thm-1}

While in three dimensions there are 8 Thurston geometries, the four-dimensional ones are infinitely many, often referred to as 18 plus a countable infinite family, as proven by Filipkiewicz \cite{F}. Our proof {of Theorem \ref{thm:geom-4d}} relies on this classification.

There are two situations to consider: when the geometric decomposition of $M$ is trivial, and when it is not. 
In the former case, a strong restriction on the possible geometries is given by excluding the non-aspherical and the solvable geometries. The latter are excluded by a result of Islam and Zimmer \cite{IZ}, asserting that if $\Omega/\Gamma$ is  compact (or more generally convex cocompact), then every virtually solvable subgroup of $\Gamma$ is virtually abelian. Since our $M = \Omega/\Gamma$ is indecomposable, $\Gamma$ is not virtually abelian, and therefore not solvable by Islam and Zimmer's result. 
In order to exclude the {
geometries $\matH^3 \times \matE^1$, $\widetilde{\matS\matL} \times \matE^1$ and $\matH^2 \times \matE^2$,} we apply a result of Wang \cite[Corollary 8.28]{Rag} 
that implies that the centre of $\Gamma$ is infinite. This contradicts the fact (proven by Benoist \cite{B2} and Vey \cite{Vey}) that the centre of a group that divides an indecomposable domain is trivial. 
At the end, we are left with $\matH^4$, $\matH^2(\matC)$ and $\matH^2\times\matH^2$, and the latter two geometries are excluded by Theorem \ref{teo:NPC-locally-symmetric-intro}.

The analysis of the situation when the geometric decomposition is non-trivial is more involved, and we do not give a complete overview here. It relies on previous work of Hillman \cite{Hpaper} concerning the possible geometries of the pieces of a 4-manifold with a geometric decomposition. The above result of Islam and Zimmer is now applied to the cusp subgroups of each piece. Then, we are essentially left with excluding the geometries $\matH^3 \times \matE^1$, $\widetilde{\matS\matL} \times \matE^1$, $\matH^2 \times \matE^2$ and 
$\matH^2\times\matH^2$, for which key ingredients are an important result of Islam and Zimmer \cite{IZ} on the centralisers of abelian subgroups of convex cocompact groups, and the irreducibility of the action of $\pi_1(M)$ on $\SL_5(\matR)$ proven by Benoist \cite{B2} and Vey \cite{Vey}. 

\subsection{The proof of Theorem \ref{teo:NPC-locally-symmetric-intro}} \label{sec:proof-thm-2}

When the symmetric space $Y$ has real rank one, Theorem \ref{teo:NPC-locally-symmetric-intro} states that $Y/\Lambda$ virtually admits a convex projective structure if and only if $Y$ is the real hyperbolic space. In higher dimension $n>4$, this is a consequence of the vanishing of the rational Pontryagin classes, together with Novikov's homeomorphism invariance of the rational Pontryagin classes \cite{N} and Farrell and Jones's topological rigidity \cite{FJ1,FJ2}. See Proposition \ref{prop:rank-1} for the details. 

The most technical part of Theorem \ref{teo:NPC-locally-symmetric-intro} then concerns the higher rank case, and consists of two parts. The proof that the second item implies the first is constructive. When $d=0$, it is a consequence of Vinberg's classification of homogeneous convex cones \cite{V} (see also Koecher \cite{Koe}), and the convex projective structures that one constructs are of the form $\Omega/\Lambda'$ for $\Omega$ a symmetric domain, namely the projective model of the symmetric space of $Y$, with the only exception of $Y=\matH^n$, in which case $\Omega$ may not be symmetric. When $d>0$, the construction additionally relies on Eberlein's work \cite{EB1,EB2,EB3} on non-positively curved locally symmetric spaces and Vey's characterisation \cite{B2,Vey} of the centre of a group that divides a properly convex cone. 

The proof of the other implication in fact shows, roughly speaking, that every convex projective structure on $Y/\Lambda'$ arises as one constructed in the proof of the first implication. In addition to the tools mentioned above, the proof significantly applies Benoist's Zariski density theorem \cite{B2} and Margulis' superrigidity \cite{M}. For simplicity, we assume here $d=0$, and we give an idea of how these results are combined. 

If the lattice $\Lambda'$ in the identity component $G$ of $\Isom(Y)$ is isomorphic to a discrete group of projective transformations that acts cocompactly on a properly convex domain $\Omega$, there is a group homomorphism $\phi \colon \Lambda' \to \PGL_{n+1}(\matR)$ with discrete image. If moreover $\Omega$ is not symmetric, the image is Zariski dense by Benoist's density theorem \cite{B2}. We are thus in a good setup to apply superrigidity. If $G$ has rank at least two, $\phi$ extends to a Lie group isomorphism $G \to \PGL_{n+1}(\matR)$, which immediately gives a contradiction. The situation when $G$ has rank one was already discussed at the beginning of this section (where actually it is only the complex hyperbolic case that crucially uses the vanishing of the Pontryagin classes, while $G=\mathrm{SL}_m(\matH)$ or $E_6^{-26}$ could alternatively be excluded by Corlette's superrigidity similarly to the higher rank case). 

Although based on similar ideas, the proof of the other implication for $d>0$ is more technical, and crucially uses Vey's characterisation of the centre of groups dividing domains and, again, Eberlein’s results \cite{EB1, EB2, EB3}.

\subsection{The proof of Theorem \ref{thm:construction}} \label{sec:proof-thm-3}

It is less difficult to build cusped hyperbolic manifolds than closed ones. Ratcliffe and Tschantz \cite{RT} determined the whole volume spectrum of cusped hyperbolic 4-manifolds as follows. By orbifold covering a specific Coxeter polytope, they explicitly built some cusped hyperbolic manifolds with minimal $\chi = 1$. Since some of these manifolds have Betti number $b_1 > 0$, they are cyclically covered by manifolds with arbitrary $\chi > 0$. Our approach to proving Theorem \ref{thm:construction} is similar, and moreover relies on a cusped hyperbolic manifold as well. We indeed build a closed convex projective 4-manifold $M$ with $\chi(M)=2$ (that is the minimal $\chi > 0$) and $b_1(M) > 0$, by orbifold covering a Coxeter 4-polytope $P$. The condition on $b_1$ is {immediate thanks} to the fact that, by construction, $M$ admits a non-trivial geometric decomposition with a single hyperbolic piece $M_1$.

For this purpose, we consider a specific hyperbolic Coxeter simplex $S = \matH^4/\Gamma_S$ with one ideal vertex. By the work of Choi, Lee and Marquis \cite{CLM1} relying on Vinberg's theory of linear reflection groups \cite{V3}, the ideal vertex of $S$ can be, roughly speaking,  truncated orthogonally, in such a way to get a compact convex projective Coxeter prism $P = \Omega/\Gamma_P${. The groups $\Gamma_P, \Gamma_S < \PGL_5(\matR)$ and the domains $\Omega, \matH^4 \subset \matRP^4$ are related by continuous deformation}. Crucially, the orbifold Euler characteristic of $P$ and $S$ equals $1/960$. To prove Theorem \ref{thm:construction}, we thus find a torsion-free subgroup $\Gamma < \Gamma_P$ of index $1920$, so that the convex projective manifold $M = \Omega/\Gamma$ has $\chi(M) = 1920/960 = 2$.

However, we could not find such a subgroup purely computationally, not even with opportune computer software. 
Finding normal subgroups is less difficult than arbitrary subgroups, but unfortunately $\Gamma_P$  does not have such a normal subgroup.
The presence of the cusped hyperbolic simplex $S$  is essential for our proof. Indeed, we first find a torsion-free normal subgroup $\Gamma_1 < \Gamma_S$ of index $1920$, and so a cusped hyperbolic manifold $M_1 = \matH^4 / \Gamma_1$ (the piece) with $\chi(M_1) = 2$.

Replacing $S$ with $P$, we get a convex projective 4-manifold $\overline{M}_1$ with 
totally geodesic boundary. Actually, it is an orbifold $\overline{M}_1 = \Omega/\overline{\Gamma}_1$ with mirror boundary that covers $P$, topologically obtained from $M_1$ by truncating its 
cusps. The torsion in $\overline{\Gamma}_1$ comes from the truncation. To get the closed convex projective manifold $M$, we glue the boundary components of $\overline{M}_1$ in pairs. It is crucial that $\Gamma_1$ (and so $\overline{\Gamma}_1$) is normal in $\Gamma_S$ (resp. $\Gamma_P$) and that $S$ has a unique ideal vertex. Indeed, unlike hyperbolic geometry, in projective geometry there is no canonical reflection associated to a hyperplane. So, it does not suffice to choose isometric gluings to ensure that the resulting closed manifold admits a convex projective structure. To pair the boundary components of $\overline{M}_1$, we use automorphisms of the regular orbifold covering $\overline{M}_1 \to P$.

\subsection{Related work and questions}

Theorem \ref{thm:geom-4d} can be considered as a generalisation of Benoist's theorem on 3-manifolds \cite{B4}, not only because the hypothesis is empty in dimension $n \leq 3$ and necessary for $n > 3$. 
On the one hand, our proof works in lower dimensions as well (see Remark \ref{rem:low-dim-geom-dec}). On the other hand, one may object that Benoist's theorem includes some additional geometric information: the JSJ tori are totally geodesic convex projective submanifolds, each the quotient of the interior of a properly embedded projective triangle in $\Omega$. But the analogous fact in our case already follows from our proof: essentially, from Islam and Zimmer's centraliser theorem \cite{IZ}. 
Alternatively, this additional information can be directly deduced from the statement of Theorem \ref{thm:geom-4d} by the flat torus theorem for convex projective manifolds of Islam and Zimmer \cite{IZ2}, of which their centraliser theorem is somehow an extension. 

Even more, by a theorem of Bobb \cite{B}, every closed indecomposable convex projective $n$-manifold $M$ contains a canonical (possibly empty) finite collection of embedded, disjoint, flat, virtual $(n-1)$-tori, consisting precisely of the image of the properly embedded (interiors of the) projective $(n-1)$-simplices in the domain. If $M$ admits a geometric decomposition into real hyperbolic pieces, 
by the flat torus theorem, one concludes that Bobb's virtual tori are precisely the ones of the geometric decomposition. 

We add a couple of observations. First we note that, roughly speaking, the convex projective manifolds from \cite{BV,CLM2,CLM1,K,LMR} (and, 
partially, also the ones from \cite{Bqi}) 
mentioned in Section \ref{sec:intro-decompositions} are somehow built by manipulating an auxiliary hyperbolic manifold: topologically by surgery, geometrically by deformation. 
It would be interesting to know whether there are non-symmetric convex projective manifolds 
of different nature.

Second, as mentioned above, in this work we made use of the result of Avez and Kobayashi on the vanishing of the rational Pontryagin classes of real projective manifolds, allowing us to exclude complex hyperbolic geometry. 
As evidence that this fact has not been fully exploited so far
, we refer to Section \ref{sec:Ontaneda} for another straightforward application, that answers a question of Ruffoni \cite[Question 1]{R}.

We conclude the introduction with some questions naturally arising from this work:
\begin{enumerate}
    \item If a closed, indecomposable, convex projective manifold of dimension $>4$ admits a (possibly trivial) geometric decomposition, are all the pieces real hyperbolic? 
    \item Is every positive even integer the Euler characteristic of a closed, orientable, convex projective 4-manifold of the types \cite{Bqi,BV,CLM2,K} mentioned above?
    \item Is every odd natural number the Euler characteristic of a closed, non-orientable, convex projective 4-manifold?
\end{enumerate}

\subsection*{Structure of the paper} In Section \ref{sec:rank1} we prove the rank-one part of Theorem \ref{teo:NPC-locally-symmetric-intro}. Section \ref{sec:prel} contains some foundational theorems on convex projective manifolds. In Section \ref{sec:spazi-simmetrici} we conclude the proof of Theorem \ref{teo:NPC-locally-symmetric-intro}, in Section \ref{sec:4d} we prove Theorem \ref{thm:geom-4d}, and in Section \ref{sec:chi} we prove Theorem \ref{thm:construction}.

\subsection*{Acknowledgments}
We are particularly grateful to Lorenzo Ruffoni, for raising our attention on wheth\-er complex hyperbolic surfaces may admit convex projective structures. S.R. thanks Mauricio Bustamante, Stefano Francaviglia, Mitul Islam, Gye-Seon Lee, Ludovic Marquis, Marco Moraschini, Beatrice Pozzetti, Grazia Rago, and J\'er\'emy Toulisse for useful conversations. A.S. is grateful to Davide Dameno, Colin Davalo, Martin Deraux, Philippe Eyssidieux, Balthazar Fl\'echelles, Fanny Kassel and Pierre Py for enlightening discussions. 

We thank Daniel Fox and Andrea Tamburelli for helping us to find out that the vanishing of the rational Pontryagin classes of real projective manifolds, of which we initially intended to include a proof in the special case of convex projective manifolds, has already been established by Avez and Kobayashi.

{\footnotesize \subsection*{Funding information.} S.R. and L.S. were funded by the European Union – NextGenerationEU under the National Recovery and Resilience Plan (PNRR) -- Mission 4 Education and research -- Component 2 From research to business -- Investment 1.1 Notice Prin 2022 -- DD N. 104 del 2/2/2022, with title ``Geometry and topology of manifolds'', proposal code 2022NMPLT8 -- CUP J53D23003820001. A.S. was funded by the European Union (ERC, GENERATE, 101124349). Views and opinions expressed are however those of the author(s) only and do not necessarily reflect those of the European Union or the European Research Council Executive Agency. Neither the European Union nor the granting authority can be held responsible for them.}

\section{Topology and real rank one} \label{sec:rank1}

The main goal of this section is to prove the rank-one part of Theorem \ref{teo:NPC-locally-symmetric-intro}. This is done in Section \ref{sec:rank-1}. Before that, in Section \ref{sec:prel-0} we introduce some basic definitions and facts about convex projective manifolds, and in Section \ref{sec:pontryagin} we deal with some topological obstructions for a manifold to admit a projective structure. Finally, Section \ref{sec:Ontaneda} is a parenthesis on a question of Ruffoni.

\subsection{Convex projective manifolds} \label{sec:prel-0}

A \emph{properly convex domain} is an open set $\Omega \subset \matRP^n$ whose closure is contained in an affine chart, in which $\Omega$ is convex. Any such $\Omega$ has its canonical Hilbert metric, and the group $\mathrm{Aut}(\Omega)=\{ g \in \PGL_{n+1}(\matR) \, | \, g(\Omega) = \Omega \}$ acts on $\Omega$ by isometry. Therefore, a subgroup $\Gamma<\mathrm{Aut}(\Omega)$ acts properly discontinuously on $\Omega$ if and only if it is discrete.

If a discrete subgroup $\Gamma < \Aut(\Omega)$ acts cocompactly on a properly convex domain $\Omega$, we say that $\Gamma$ \emph{divides} $\Omega$ and that $\Omega$ is \emph{divisible}. The action of $\Gamma$ on $\Omega$ is free if and only if $\Gamma$ is torsion free. By Selberg's lemma, this condition can always be achieved \emph{virtually}, i.e. up to possibly passing to a finite-index subgroup of $\Gamma$.

A \emph{convex projective manifold} is a smooth manifold of the form $\Omega/\Gamma$, where $\Omega$ is a properly convex domain and $\Gamma<\mathrm{Aut}(\Omega)$ is discrete and torsion free. More generally, to deal with orbifolds, it suffices to remove the torsion-free condition. A \emph{convex projective structure} on a smooth manifold $M$ is a diffeomorphism from $M$ to a convex projective manifold $\Omega/\Gamma$. This is equivalent to a \emph{projective structure} on $M$ --- i.e. a $(G,X)$-structure with $X = \matRP^n$ and $G=\PGL_{n+1}(\matR)$ --- whose developing map is a diffeomorphism with $\Omega$. Note that $\Omega \to \Omega/\Gamma \cong M$ is a universal cover and $\Gamma \cong \pi_1(M)$. In particular, the universal cover of $M$ is contractible and therefore $M$ is aspherical. 

Recall that Whitehead's theorem implies that two aspherical manifolds $M$ and $N$ are homotopy equivalent if $\pi_1(M) \cong \pi_1(N)$. A closed manifold $M$ is \emph{topologically rigid} if every homotopy equivalence between $M$ and a topological manifold $N$ is homotopic to a homeomorphism. Closed surfaces and closed aspherical 3-manifolds are well known to be topologically rigid
. By a result of Farrell and Jones \cite[Corollary 0.3]{FJ2}, closed convex projective $n$-manifolds are topologically rigid for $n > 4$ as well. Therefore, at least in dimension $n \neq 4$, closed convex projective $n$-manifolds are determined up to homeomorphism by their fundamental group.


\subsection{Topology} \label{sec:pontryagin}

Our starting point, Theorem \ref{thm:pontryagin} below, is a topological obstruction for (not necessarily compact, nor convex) projective manifolds of arbitrary dimension. It has been provided by Kobayashi \cite[Theorem 4.16]{Kob} in the 1980s, relying on a previous result of Avez \cite{A}. We first recall a definition.

Given a complex vector bundle $E \to M$, let $c_j(E) \in H^{2j}(M; \matQ)$ denote the image of its $j$-th Chern class in rational cohomology. Assume now that $M$ is a smooth manifold. The $k$-th \emph{rational Pontryagin class} of $M$ is then defined by
$$p_k(M) = (-1)^k c_{2k}(TM\otimes_\matR \matC) \in H^{4k}(M; \matQ).$$
(Observe that $c_{2k+1}(TM\otimes_\matR \matC)$ vanishes, so the rational Chern classes are meaningful only in even degree.) By a famous theorem of Novikov \cite{N}, these classes are homeomorphism invariant for closed oriented smooth manifolds.

\begin{thm}[Avez \cite{A}, Kobayashi \cite{Kob}] \label{thm:pontryagin}
    If $M$ admits a projective structure, then $p_k(M) = 0$ for all $k>0$.
\end{thm}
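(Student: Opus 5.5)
The strategy is to produce an auxiliary flat vector bundle built from the projective structure and show that its rational Pontryagin classes both vanish and agree with those of $TM$.

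Let $\mathrm{dev}\colon \widetilde M \to \matRP^n$ be the developing map and $\rho\colon \pi_1(M)\to \PGL_{n+1}(\matR)$ the holonomy. Consider the tautological line bundle $L \to \matRP^n$ and the trivial bundle $\underline{\matR}^{n+1}$. There is a $\PGL_{n+1}(\matR)$-equivariant identity
$$T\matRP^n \oplus \underline{\matR} \;\cong\; \mathrm{Hom}(L,\underline{\matR}^{n+1}) = L^*\otimes \underline{\matR}^{n+1}.$$
Because $\PGL_{n+1}(\matR)$ does not act linearly on $\matR^{n+1}$, it is convenient to work with $\SL^\pm_{n+1}(\matR)$, or with projective frames. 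The plan has three steps.

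\emph{Step 1 (projectivised flat bundle).} Form the flat $\matRP^n$-bundle $E = \widetilde M \times_\rho \matRP^n$. Its section $s$ induced by $\mathrm{dev}$ is transverse to the flat foliation, and along $s$ the vertical tangent bundle restricts to $TM$. This is standard $(G,X)$-structure theory. It follows that $TM\oplus \underline{\matR} \cong \ell^*\otimes V$, where $\ell = s^*L$ is a real line bundle and $V$ is the flat vector bundle of rank $n+1$ associated to a lift of $\rho$.

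\emph{Step 2 (the lift).} If $\rho$ does not lift to $\GL_{n+1}(\matR)$, I would replace $V$ by $\mathrm{End}$-type constructions, or note that the obstruction is a $\matZ/2$ or torsion class. Rational characteristic classes only care about the bundle up to finite covers and torsion twisting. A cleaner way to see this is that $\ell^*\otimes V$ is itself defined as a genuine vector bundle, namely $\widetilde M\times_{\pi_1}\{(x,v): v\in \mathrm{dev}(x)\}$ dualised and tensored. So I may instead check the vanishing after pulling back to a finite cover, and the pullback on rational cohomology is injective for finite covers.

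\emph{Step 3 (Chern--Weil).} A flat real vector bundle has vanishing rational Pontryagin classes, since a flat connection has zero curvature. Tensoring with a real line bundle does not change rational Pontryagin classes, because $p(\ell\otimes V)$ is computed from $w_1$-type torsion data and rationally $\ell\otimes\matC$ has $c_1$ of order two. Hence
$$p(TM) = p(TM\oplus\underline{\matR}) = p(\ell^*\otimes V) = 0$$
in rational cohomology.

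I expect the main obstacle to be Step 2: making the identification $TM\oplus\underline{\matR}\cong \ell^*\otimes V$ precise when the holonomy lies only in $\PGL$. The cleanest route I would try first is to lift $\rho$ to $\SL^\pm_{n+1}(\matR)$ on a finite-index subgroup, use the central $\matZ/2$ ambiguity to absorb it into $\ell$, and then invoke injectivity of rational cohomology under finite covers.
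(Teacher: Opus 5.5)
Your overall strategy is sound: use the Euler sequence to identify $TM\oplus\underline{\matR}$ with a bundle built from the holonomy, then apply Chern--Weil. Note that the paper gives no argument of its own here; it only cites Avez and Kobayashi. However, Step 2 is left open, and the fix you propose is not justified as stated.

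A central $\matZ/2$-extension need not split on any finite-index subgroup, so ``lift $\rho$ to $\SL^\pm_{n+1}(\matR)$ on a finite-index subgroup'' needs an argument. One option is residual finiteness of the preimage of $\rho(\pi_1 M)$ in $\SL^\pm_{n+1}(\matR)$. Malcev's theorem provides this when $\pi_1 M$ is finitely generated, but the statement allows arbitrary, possibly non-compact, $M$. Your remark that $\ell^*\otimes V$ is a genuine bundle is correct, since scalars act trivially on $\mathrm{Hom}(L,\underline{\matR}^{n+1})$. By itself, though, it does not give the vanishing, because neither tensor factor is then globally defined.

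The missing observation is that there is no obstruction at all:
\begin{enumerate}
\item Since $\widetilde M$ is simply connected, $\mathrm{dev}$ lifts to $\widetilde{\mathrm{dev}}\colon \widetilde M\to \matS^n=(\matR^{n+1}\sm\{0\})/\matR_{>0}$.
\item For each $\gamma\in\pi_1(M)$, exactly one of the two representatives $\pm A\in\SL^\pm_{n+1}(\matR)$ of $\rho(\gamma)$ satisfies $\widetilde{\mathrm{dev}}\circ\gamma=A\circ\widetilde{\mathrm{dev}}$. This follows from uniqueness of lifts to the double cover $\matS^n\to\matRP^n$ on the connected space $\widetilde M$, together with the fact that $-I$ acts freely on $\matS^n$. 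Call this element $\hat\rho(\gamma)$.
\item The same two facts show that $\hat\rho$ is a homomorphism. Indeed, $\hat\rho(\gamma\eta)$ and $\hat\rho(\gamma)\hat\rho(\eta)$ both represent $\rho(\gamma\eta)$ and both intertwine $\widetilde{\mathrm{dev}}$ with $\gamma\eta$.
\item The pullback of $L$ to $\matS^n$ is canonically oriented by the rays, and $\SL^\pm_{n+1}(\matR)$ preserves this orientation.
\item Hence $\ell$ is orientable, thus trivial, and $TM\oplus\underline{\matR}\cong V$ is itself flat. Chern--Weil then concludes at once. This is the usual fact that $M\times S^1$ carries a radiant affine structure.
\end{enumerate}
Even without this, Step 3 is simpler if you note that every real line bundle admits a flat $\Or(1)$-connection, so $\ell^*\otimes V$ is flat.

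One minor correction: $T\matRP^n\oplus\underline{\matR}\cong\mathrm{Hom}(L,\underline{\matR}^{n+1})$ is not a $\PGL_{n+1}(\matR)$-equivariant isomorphism, because the stabiliser of a point acts transitively on complements to $L$. Only the Euler sequence $0\to\mathrm{Hom}(L,L)\to\mathrm{Hom}(L,\underline{\matR}^{n+1})\to T\matRP^n\to 0$ is equivariant. You should descend the sequence to $M$ and split it there.
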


Recall that the \emph{intersection form} of a closed oriented manifold $M$ of dimension $n=4k$ is the symmetric, non-degenerate, bilinear form
$$H^{2k}(M; \matZ) \times H^{2k}(M; \matZ) \to \matZ$$
defined via the cup product $\smile$ and the Poincar\'e duality by $\alpha \cdot \beta = \langle \alpha \smile \beta, [M] \rangle$, where $[M]$ is the fundamental class of $M$. Let $b^+(M)$ and $b^-(M)$ denote the number of positive and negative eigenvalues of the associated real form, respectively. Then the \emph{signature} of $M$ is defined by $\sigma(M) = b^+(M) - b^-(M) \in \matZ$.
    
\begin{cor} \label{cor:chi-sigma}
    If a closed oriented manifold $M$ of dimension $n \equiv 0 \mod 4$ admits a projective structure, then $\sigma(M) = 0$. In particular, $\chi(M)$ is even. 
\end{cor}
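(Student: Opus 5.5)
By Theorem \ref{thm:pontryagin}, all rational Pontryagin classes $p_k(M)$ with $k>0$ vanish. Write $n=4m$. The Hirzebruch signature theorem expresses $\sigma(M)$ as the evaluation on the fundamental class $[M]$ of the $L$-polynomial $L_m(p_1(M),\dots,p_m(M))$. This polynomial is homogeneous of degree $4m$, with every monomial a product of classes $p_k$ with $k\ge 1$. Since each $p_k(M)$ vanishes, $L_m(p_1,\dots,p_m)=0$ in $H^{4m}(M;\matQ)$, and hence
$$\sigma(M)=\langle L_m(p_1(M),\dots,p_m(M)),[M]\rangle=0.$$
In dimension four this is simply $\sigma(M)=\tfrac13\langle p_1(M),[M]\rangle=0$.

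For the parity of $\chi(M)$, I would use Poincar\'e duality together with the standard congruence between the Euler characteristic and the middle Betti number. Let $b_i$ denote the real Betti numbers. Duality gives $b_i=b_{4m-i}$, so
$$\chi(M)=\sum_{i=0}^{4m}(-1)^i b_i = 2\sum_{i<2m}(-1)^i b_i + b_{2m},$$
using that the paired indices $i$ and $4m-i$ have the same parity. Hence $\chi(M)\equiv b_{2m}\pmod 2$. The intersection form on $H^{2m}(M;\matR)$ is non-degenerate, so $b_{2m}=b^+(M)+b^-(M)$. Combined with $\sigma(M)=b^+(M)-b^-(M)$, this gives $b_{2m}=2b^+(M)-\sigma(M)\equiv\sigma(M)\pmod 2$. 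Therefore $\chi(M)\equiv\sigma(M)=0\pmod 2$.

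No step here is a real obstacle, since the substantive input is Theorem \ref{thm:pontryagin}, which we take as given. The only points needing care are the following. First, the signature theorem is applied to a smooth closed oriented manifold, which $M$ is by assumption. Second, the rational coefficients in the $L$-polynomial are harmless because the relevant classes are zero.
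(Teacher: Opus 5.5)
Your proof is correct and is essentially the paper's: vanishing of the rational Pontryagin classes by Theorem~\ref{thm:pontryagin} combined with the Hirzebruch signature theorem (the paper phrases this via Pontryagin numbers, you via the $L$-polynomial), followed by the parity argument $\chi(M)\equiv b_{2m}=b^+(M)+b^-(M)\equiv\sigma(M) \pmod 2$ from Poincar\'e duality. Your expansion of $\chi(M)$ keeps the signs $(-1)^i$ that the paper's displayed formula omits, though this is harmless modulo $2$.
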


\begin{proof}
    Recall that, for each partition $\iota = (i_1, \ldots, i_m)$ such that $i_1 + \ldots + i_m = n$, the $\iota$-th \emph{Pontryagin number} of $M$ is 
    $p_\iota(M) = \langle p_{i_1}(M) \smile \ldots \smile p_{i_m}(M), [M] \rangle \in \matZ$. By the Hirzebruch signature theorem, for every positive integer $k$ there exists a linear combination $\ell_k$ of the Pontryagin numbers such that $\sigma(M) = \ell_k(M)$ for every closed oriented $4k$-manifold $M$. As in our case $p_\iota(M) = 0$ for each $\iota$ by Theorem \ref{thm:pontryagin}, we have $\sigma(M) = 0$. 

    In particular, $\chi(M)$ is even because $\chi(M)$ and $\sigma(M)$ have always the same parity. Indeed, denoting by $b_i$ the $i$-th Betti number, by definition and Poincar\'e duality
    \begin{align*}
        \chi(M)  & = \sum_{i=0}^{4k} (-1)^i b_i(M) = b_{2k}(M) + 2 \sum_{i=0}^{2k-1} b_i(M) \\
        & \equiv b_{2k}(M) = b^+(M) + b^-(M) & \mod2& \\
        & \equiv b^+(M) - b^-(M) = \sigma(M) & \mod2&.
    \end{align*}
\end{proof}

\subsection{Rank-one locally symmetric spaces} \label{sec:rank-1}

Recall that a non-positively curved symmetric space $X$ of real rank one is a $\matK$-hyperbolic $m$-space for $\matK = \matR, \matC, \matH, \matO$, where $m = \dim_\matK(X) \geq 2$. In the octonionic case $\matK = \matO$, we have only $m=2$. The previous obstructions allow us to generalise and solve Benoist's \cite[Problem 4]{Bquest}:

\begin{prop} \label{prop:rank-1} 
    If a group that divides a properly convex domain is isomorphic to a $\matK$-hyperbolic lattice, then $\matK = \matR$.
\end{prop}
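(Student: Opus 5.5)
Suppose $\Gamma$ divides a properly convex domain $\Omega \subset \matRP^n$ and $\Gamma \cong \Lambda$, where $\Lambda$ is a lattice in the isometry group of a $\matK$-hyperbolic space $X$ with $\matK \neq \matR$. We aim for a contradiction.

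\textbf{Reductions.} Since $\Gamma$ acts cocompactly and properly on the contractible manifold $\Omega$, it has finite virtual cohomological dimension $n$. Passing via Selberg's lemma to a common torsion-free finite-index subgroup, both $\Omega/\Gamma'$ and $X/\Lambda'$ are closed aspherical manifolds with isomorphic fundamental groups. Here $\Lambda'$ is cocompact: a non-uniform lattice has cohomological dimension strictly less than $\dim X$, whereas $\Gamma'$ is a Poincaré duality group of dimension $n$, so the two would be inconsistent. Comparing cohomological dimensions then gives $n = \dim_\matR X = dm$, with $d = 2,4,8$. After passing to a further subgroup of index at most two, both manifolds are orientable. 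The closed convex projective manifold $M = \Omega/\Gamma'$ and the locally symmetric manifold $N = X/\Lambda'$ are therefore homotopy equivalent by Whitehead's theorem.

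\textbf{Low dimension.} If $n = 4$, then $N$ is a closed complex hyperbolic surface. Signature is a homotopy invariant, since it is determined by the cup product and the fundamental class, so $\sigma(M) = \sigma(N)$. Corollary \ref{cor:chi-sigma} gives $\sigma(M) = 0$, while $\sigma(N) > 0$ because $c_1^2 = 3c_2$ for ball quotients and $\chi > 0$, which forces $\sigma = \chi/3$. This is a contradiction.

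\textbf{Higher dimension.} If $n > 4$, we use topological rigidity. By Farrell--Jones (either for $N$, which is non-positively curved, or via \cite[Corollary 0.3]{FJ2} for $M$), the homotopy equivalence $M \to N$ is homotopic to a homeomorphism. Novikov's theorem then shows that the rational Pontryagin classes of $N$ agree with those of $M$, which vanish by Theorem \ref{thm:pontryagin}. It remains to show that a closed $\matK$-hyperbolic manifold with $\matK \neq \matR$ has a nonzero rational Pontryagin class. By Hirzebruch proportionality, the Pontryagin numbers of $N$ are a nonzero multiple (the volume ratio, up to sign) of those of the compact dual $\matK\matP^m$. So it suffices that the compact dual has some nonzero Pontryagin number; a nonzero Pontryagin number forces a nonzero Pontryagin class.

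\textbf{Compact duals.} Here we check $\matC\matP^m$, $\matH\matP^m$ and $\matO\matP^2$. When $n = dm \equiv 0 \pmod 4$, the signature of each is $\pm 1$, so some Pontryagin number is nonzero. The remaining case is $\matC\matP^m$ with $m$ odd, where the dimension is not divisible by $4$, so there are no Pontryagin numbers. There we argue with classes directly: proportionality holds at the level of characteristic classes, transferring polynomial relations in cohomology. Concretely, $p_1(\matC\matP^m) = (m+1)h^2 \neq 0$, and the proportional class on $N$ is a nonzero multiple of $\omega^2$, where $\omega$ is the Kähler class. This class is nonzero because $\omega^m \neq 0$ and $m \geq 3$, so $p_1(N) \neq 0$.

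\textbf{Main obstacle.} The hardest step is this last one: justifying that Pontryagin classes, not just numbers, transfer under Hirzebruch proportionality in the odd complex case. I would try Chern--Weil theory first, since the curvature tensor of $N$ is parallel and proportional to minus that of $\matC\matP^m$, so $p_1(N)$ is represented by a nonzero multiple of the parallel form $\omega^2$.
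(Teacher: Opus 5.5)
Your outline follows the paper's proof: reduce to closed manifolds, use the signature when $n=4$, and use Farrell--Jones, Novikov and Theorem~\ref{thm:pontryagin} when $n>4$. The difference is that the paper cites Tshishiku~\cite{T} for the non-vanishing of the rational Pontryagin classes of closed $\matK$-hyperbolic manifolds with $\matK\neq\matR$, while you derive it from Hirzebruch proportionality. That derivation contains a false step.

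You assert that the compact dual has signature $\pm1$ whenever its dimension is divisible by $4$. But $H^*(\matH\matP^m;\matZ)=\matZ[u]/(u^{m+1})$ with $\deg u=4$. For $m$ odd the middle group $H^{2m}(\matH\matP^m;\matZ)$ is zero, so $\sigma(\matH\matP^m)=0$; for example $\sigma(\matH\matP^3)=0$. Hence closed quaternionic hyperbolic manifolds of real dimension $12,20,28,\dots$ are not ruled out by your argument. This is why the paper's parenthetical remark claims the signature argument only for $m$ even.

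The repair is short. From $p(\matH\matP^m)=(1+u)^{2m+2}(1+4u)^{-1}$ one gets $p_1(\matH\matP^m)=2(m-1)u$. So $p_1^m[\matH\matP^m]=\pm(2m-2)^m\neq 0$ for every $m\geq 2$, and proportionality then gives a non-zero Pontryagin number of $N$. Other options:
\begin{itemize}
\item run your Chern--Weil argument from the odd complex case, which works uniformly: the parallel $p_1$-form is a non-zero multiple of the $\mathrm{Sp}(m)\mathrm{Sp}(1)$-invariant $4$-form, whose $m$-th power is a volume form;
\item cite~\cite{T};
\item exclude $\matK=\matH,\matO$ by Corlette's superrigidity and Benoist's density theorem, as in Remark~\ref{rem:no-corlette}.
\end{itemize}
Your treatment of $\matC\matP^m$ with $m$ odd via Chern--Weil is fine.

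A second, smaller gap is in your cocompactness step. Knowing $\mathrm{cd}(\Lambda')<\dim X$ and $\mathrm{cd}(\Gamma')=n$ gives no contradiction by itself, since a priori $n<\dim X$ is possible. You need one more input, for instance one of the following.
\begin{itemize}
\item A torsion-free non-uniform lattice is the fundamental group of a compact aspherical manifold with non-empty boundary, and so is not a Poincar\'e duality group.
\item As in the paper: non-compactness forces $n=\dim X-1$, which is odd, so $\chi(M)=0$, while $\chi(N)\neq 0$ by Gauss--Bonnet.
\item The cusp groups are virtually nilpotent but not virtually abelian, contradicting Theorem~\ref{thm:IZ}--\eqref{it:abelian}.
\end{itemize}
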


\begin{proof}
Let $\Gamma$ divide $\Omega \subset \matRP^n$, where $n \geq 4$. By contradiction, we suppose that $\Gamma$ is isomorphic to a $\matK$-hyperbolic lattice $\Lambda$ with $\matK \neq \matR$. Up to replacing with finite-index subgroups, we assume by the Selberg lemma that $\Gamma \cong \Lambda$ are torsion free and that the manifolds $M = \Omega/\Gamma$ and $N = \matH^m(\matK)/\Lambda$ are orientable.

We claim that $M$ and $N$ have the same real dimension $n$ and are both closed (in particular, $\Lambda$ is cocompact). By asphericity, $M$ and $N$ are homotopically equivalent.  Now, assume by contradiction that $N$ is non-compact, which implies that the cohomological dimension of $\Lambda$ is $<d$. By the Margulis lemma, $N$ is diffeomorphic to the interior of a compact $d$-manifold $\overline{N}$ with $\pi_1$-injective, aspherical, boundary components. Therefore $\Lambda \cong \pi_1(\overline{N})$ has cohomological dimension $d-1$ (because the cohomological dimension of a group is at least the one of a subgroup), and so $n = d-1$ because $\Gamma \cong \Lambda$. But $n$ is odd (because $\matK \neq \matR$, so $d$ is even), so $\chi(M) = 0$ by the Poincar\'e duality. This contradicts the (well-known) fact that $\chi(N)>0$ by the Chern--Gauss--Bonnet theorem, as $\chi$ is a homotopy invariant. (Alternatively, as $\matK \neq \matR$, the cusp groups in $\Lambda$ are well-known to be virtually solvable but not virtually abelian, and this contradicts Islam and Zimmer's Theorem \ref{thm:IZ}--(1), stated and used in the sequel.) Therefore $N$ is closed and $d = n$. 

If $n=4$, so $\matK = \matC$ and $m = 2$, we have a contradiction because the signature is a homotopy invariant. Indeed, Corollary \ref{cor:chi-sigma} gives $\sigma(M)=0$, while $\sigma(N) \neq 0$ for every 
closed complex hyperbolic surface $N$. The latter (well-known) fact follows, for example, by combining the Hirzebruch signature theorem with the Hirzebruch proportionality principle
, since the compact dual of $\matH^2(\matC)$ is $\matCP^2$ and $\sigma(\matCP^2) \neq 0$. (In fact, the same argument gives a contradiction in higher dimensions for $\matK = \matC, \matH, \matO$ when $m$ is even.) 

If instead $n > 4$, by the topological rigidity of $M$ \cite{FJ2} (or of $N$ \cite{FJ1}
), the manifolds $M$ and $N$ are homeomorphic. Therefore, by Novikov's theorem \cite{N}, they have the same rational Pontryagin classes, which are trivial by Theorem \ref{thm:pontryagin}. But the only closed hyperbolic manifolds with trivial rational Pontryagin classes are the real ones \cite{T}, and we have another contradiction.
\end{proof}

An equivalent statement follows for the sake of completeness.
\begin{cor}\label{cor:no complex hyperbolic}
    If a divisible convex domain is quasi-isometric to a $\matK$-hyperbolic space, then $\matK = \matR$.
\end{cor}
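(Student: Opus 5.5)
The plan is to reduce Corollary \ref{cor:no complex hyperbolic} to Proposition \ref{prop:rank-1} through the Švarc--Milnor lemma together with quasi-isometric rigidity of rank-one symmetric spaces. Let $\Omega$ be divisible, say divided by $\Gamma$, and suppose that $\Omega$ with its Hilbert metric is quasi-isometric to $\matH^m(\matK)$. The action of $\Gamma$ on $\Omega$ is properly discontinuous and cocompact by isometries of a proper geodesic metric space, since the Hilbert metric is proper and geodesic. Hence by Švarc--Milnor, $\Gamma$ is finitely generated and quasi-isometric to $\Omega$, and therefore to $\matH^m(\matK)$.

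The next step is to apply quasi-isometric rigidity of lattices in rank-one symmetric spaces. This is due to Tukia, Gabai, and Casson--Jungreis for $\matH^2$, Sullivan--Tukia for $\matR$-hyperbolic spaces of dimension at least three, Chow for the complex case, and Pansu for the quaternionic and octonionic cases. It gives that a finitely generated group quasi-isometric to $\matH^m(\matK)$ has a finite normal subgroup $F$ such that $\Gamma/F$ is isomorphic to a uniform lattice in $\Isom(\matH^m(\matK))$.

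I then need to pass from a lattice up to finite kernel to an honest lattice, and here I expect the only real care to be needed. By Selberg's lemma, $\Gamma$ is linear and thus has a torsion-free finite-index subgroup $\Gamma'$. Since $\Gamma' \cap F$ is finite and torsion free, it is trivial, so $\Gamma'$ injects into $\Gamma/F$ with finite-index image. Therefore $\Gamma'$ is isomorphic to a uniform $\matK$-hyperbolic lattice. On the other side, $\Gamma'$ still divides $\Omega$, because a finite-index subgroup of a dividing group acts cocompactly. Proposition \ref{prop:rank-1} now forces $\matK = \matR$.

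For the converse equivalence, which justifies calling the statement equivalent, I would note the following. If $\Gamma$ divides $\Omega$ and is isomorphic to a $\matK$-hyperbolic lattice, the lattice is cocompact, as shown in the proof of Proposition \ref{prop:rank-1}. Švarc--Milnor applied twice then makes $\Omega$ quasi-isometric to $\matH^m(\matK)$. The main obstacle is just invoking the rigidity theorem correctly, including the finite-kernel issue handled above. For $\matK=\matC,\matH,\matO$ only Chow and Pansu are needed, since those are the cases to exclude.
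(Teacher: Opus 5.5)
Your proposal is correct and follows the paper's own route: \v{S}varc--Milnor gives that $\Gamma$ is quasi-isometric to $\matH^m(\matK)$, Chow and Pansu's quasi-isometric rigidity gives a finite-index subgroup isomorphic to a uniform lattice, and Proposition~\ref{prop:rank-1} then forces $\matK = \matR$. Your Selberg-lemma step makes explicit how one passes from ``a uniform lattice modulo a finite normal subgroup'' to ``a finite-index subgroup isomorphic to a lattice''. The paper leaves that step implicit, so spelling it out is a sensible addition.
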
 

\begin{proof}
Let $\Gamma$ divide 
$\Omega$ and assume that $\Omega$ is quasi-isometric to $\matH^m(\matK)$. By the Milnor--\v{S}varc Lemma, $\Gamma$ is quasi-isometric to $\Omega$ (endowed with any $\Gamma$-invariant metric), and to $\matH^m(\matK)$. By Chow and Pansu's quasi-isometric rigidity \cite{C,P}, $\Gamma$ admits a finite-index subgroup isomorphic to a lattice in $\Isom(\matH^m(\matK))$, so Proposition \ref{prop:rank-1} applies.    
\end{proof}

We conclude the section with a couple of observations.

\begin{rem} 
The argument of Proposition \ref{prop:rank-1} does not apply to all the remaining locally symmetric spaces of non-compact type that do not admit convex projective models. The ones associated to complex Lie groups have indeed trivial rational Pontryagin classes (see Tshishiku \cite{T}). Thus the proof of Theorem \ref{teo:NPC-locally-symmetric-intro}, that will be done in Section \ref{sec:spazi-simmetrici}, needs to rely on a different approach.
\end{rem}

\begin{rem} \label{rem:no-corlette} In the proof of Theorem \ref{teo:NPC-locally-symmetric-intro}, Proposition \ref{prop:rank-1} is really essential only to exclude complex hyperbolic lattices.
Indeed, the higher-rank lattices will be excluded in Proposition \ref{prop:localmente-simmetrico} thanks to Benoist's density Theorem \ref{teo:Benoist} and Margulis's superrigidity. The quaternionic and octonionic hyperbolic lattices can (and will) also be excluded in the same way by Corlette's superrigidity (see Theorem \ref{teo:superrigidity}). 
\end{rem}

\subsection{Ontaneda's manifolds}\label{sec:Ontaneda} 

Before moving on, we remark that Ruffoni \cite[Question 1]{R} asked whether there exists a closed aspherical manifold $M$ such that (1) $M$ does not admit any real projective structure, (2) $M$ does not admit any flat conformal structure, and (3) $\pi_1(M)$ is hyperbolic.

Proposition \ref{prop:rank-1} gives a simple, positive, answer. Indeed, the rational Pontryagin classes of locally conformally flat manifolds are known to be trivial. As a consequence, complex hyperbolic manifolds do not admit any locally conformally flat structure, because (as already mentioned in the proof of Proposition \ref{prop:rank-1}) their rational Pontryagin classes are not all trivial \cite{T}. Hence complex hyperbolic manifolds satisfy all the three conditions (1), (2) and (3) above. This seems to indicate that the result on the triviality of Pontryagin classes has not been taken into account  in several contexts. 

In the spirit of Ruffoni's question, it is now natural to add the extra requirement that (4) $\pi_1(M)$ is not isomorphic to any lattice in a semisimple Lie group of real rank one. We get the same, simple, positive answer with another class of manifolds: the negatively-pinched manifolds with non-trivial rational Pontryagin classes built by Ontaneda via Riemannian hyperbolisation \cite[Corollary 4]{O}.
By (1), these are also examples of manifolds of arbitrary dimension $n \geq 4$ whose fundamental groups do not admit any projective Anosov representation into $\SL_d(\matR)$ for $d \leq n+1$, although they do for some $d$ by \cite{DFWZ,LR2}.

\section{Preliminaries on convex projective manifolds} \label{sec:prel}

Before continuing with the proof of Theorem \ref{teo:NPC-locally-symmetric-intro}, we collect in this section some foundational results on divisible convex domains and convex projective manifolds. 

\subsection{(In)decomposable domains} \label{sec:(in)dec}

We call \emph{convex cone} an open subset of $\matR^{n+1}$ invariant under linear combinations with positive coefficients. A convex cone is \emph{proper} if its closure is contained in a half-space of the form $a_0x_0+\dots+a_nx_n>0$. 

Given a convex cone $C \subset \matR^{n+1}$, we set $$\Aut(C)=\{g\in \mathrm{GL}_{n+1}(\matR)\,|\,g(C)=C\}.$$
Let $\Omega_1=\mathbb{P}(C_1)$ and $\Omega_2=\mathbb{P}(C_1)$ be properly convex domains obtained by projectivising two proper convex cones $C_1 \subset V_1$ and $C_2 \subset V_2$ respectively (where $V_1$ and $V_2$ are finite dimensional real vector spaces). The direct sum $C_1 \oplus C_2$ is a convex cone in $V_1 \oplus V_2$, and we define the (projective) \emph{join} of $\Omega_1$ and $\Omega_2$ as 
$$\Omega_1 \ast \Omega_2=\mathbb{P}(C_1\oplus C_2)\subset \mathbb{P}(V_1 \oplus V_2).$$ 
{If $\dim(\Omega_i) = n_i$, then $\Omega_1 \ast \Omega_2$ is a properly convex domain of dimension $n_1 + n_2 + 1$.}

A properly convex domain $\Omega =\mathbb{P}(C) \subset \matRP^n$ is said to be \emph{decomposable} if there exists a non-trivial direct sum decomposition $\matR^{n+1}=V_1 \oplus V_2$ which induces a decomposition of $C$, in the sense that $C=(C\cap V_1) \oplus (C \cap V_2)$. In this case $\Omega=\Omega_1 \ast \Omega_2$ with $\Omega_i=\mathbb{P}(C \cap V_i)$. The domain $\Omega$ is said to be \emph{indecomposable} otherwise.

Any properly convex domain $\Omega=\mathbb{P}(C)\subset \matRP^n$ can be written as a join of indecomposable domains 
\begin{equation}\label{eq:indecomposable-deomposition}\Omega=\Omega_1 \ast \dots \ast \Omega_k,\end{equation}
meaning that there exists a direct sum decomposition $\matR^{n+1}=V_1 \oplus \dots \oplus V_k$ such that $\Omega_i=\mathbb{P}(C\cap V_i)$ is indecomposable for every $i=1,\dots,k$. Let us set $C_i= C \cap V_i$ and let $\matR_{+}^k=\{A \in \mathrm{GL}_{n+1}(\matR) \, |\, \forall i=1,\dots,k\; \exists \lambda_i >0 :\, Av=\lambda_i v\; \forall v \in V_i\}$ denote the group of ``independent'' positive homotheties in each factor $V_i$. Then $\mathrm{Aut}(C_1) \times \dots \times \mathrm{Aut}(C_k)$ is a finite-index subgroup of $\mathrm{Aut}(C)$ and

\begin{equation}\label{eq:automorphism-divisible}
\mathbb{P}(\mathrm{Aut}(C_1) \times \dots\times \mathrm{Aut}(C_k))\cong \mathrm{Aut}(\Omega_1) \times \dots \times \mathrm{Aut}(\Omega_k) \times \mathbb{P}(\mathbb{\matR}_{+}^k)
\end{equation} is a finite-index subgroup of $\mathrm{Aut}(\Omega)$. The full group $\mathrm{Aut}(\Omega)$ acts via non-trivial permutations on the projectively equivalent factors of \eqref{eq:indecomposable-deomposition} (if they exist).

\subsection{Lifts to the special and general linear groups} \label{sec:lifts}

We start with an easy property for a subgroup of the projective automorphisms of properly convex domains and cones.

\begin{defn}\label{defn:divide-cone}
    Let $C \subset \matR^{n+1}$ be a convex cone. A group $\widetilde{\Gamma}<\mathrm{Aut}(C)$ \emph{spans} the cone $C$ if there exists a compact set $K \subset C$ such that $\widetilde{\Gamma} \cdot K=C$. If $\widetilde{\Gamma} < \mathrm{Aut}(C)$ is discrete and spans the cone $C$, then $\widetilde{\Gamma}$ \emph{divides} and $C$ is said to be \emph{divisible}.
\end{defn}

We denote, for a subgroup $G<\GL_{n+1}(\matR)$ and $C$ a convex cone in $\matR^{n+1}$,
$$\Aut_G(C)=\{g\in G\,|\,g(C)= C\}.$$
We will particularly apply this notation to $G=\SL^\pm_{n+1}(\matR)$, where
$$\SL^\pm_{n+1}(\matR)=\{g\in \GL_{n+1}(\matR)\,|\,\det(g)=\pm 1\}~.$$

Now, let $\Omega$ be a properly convex domain in $\matRP^{n}$, and let $\pi \colon \matR^{n+1}\setminus \{0\}\to\matRP^n$ be the projection. Then $\pi^{-1}(\Omega)$ has two connected components, each of which is a proper convex cone in $\matR^{n+1}$. We can lift any group $\Gamma < \Aut(\Omega)$ to $\SL^\pm_{n+1}(\matR)$ as follows:

\begin{prop}\label{prop:lift}
    
    Let $\Omega \subset \matRP^{n}$ be a properly convex domain and $\Gamma < \Aut(\Omega)$. Then there exists a subgroup $\widehat \Gamma<\SL^\pm_{n+1}(\matR)$ preserving each connected component of $\pi^{-1}(\Omega)$ that lifts $\Gamma$, that is, such that the projection $\GL_{n+1}(\matR)\to\PGL_{n+1}(\matR)$ induces an isomorphism between $\widehat\Gamma$ and $\Gamma$.
\end{prop}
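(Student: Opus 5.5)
Fix one connected component $C$ of $\pi^{-1}(\Omega)$; the other is $-C$. The idea is to lift each $g\in\Gamma$ to the unique matrix of determinant $\pm1$ that maps $C$ to itself. Then I check that these lifts form a group and that projection restricts to an isomorphism.

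\emph{Existence and uniqueness of the lift.} Let $g\in\Gamma$ and choose any lift $A\in\GL_{n+1}(\matR)$. Since $g(\Omega)=\Omega$, the matrix $A$ maps $\pi^{-1}(\Omega)=C\sqcup(-C)$ onto itself. Because $A$ is a homeomorphism and $C$ is connected, $A(C)$ is either $C$ or $-C$. Replacing $A$ by $-A$ if needed, we may assume $A(C)=C$. Rescaling by the positive scalar $|\det A|^{-1/(n+1)}$ preserves $C$, since $C$ is a cone, and gives a lift $\widehat g\in\SL^\pm_{n+1}(\matR)$ with $\widehat g(C)=C$. Linearity then gives $\widehat g(-C)=-C$ as well.

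Now suppose $\lambda\widehat g$ is another such lift, with $\lambda\in\matR^*$. The condition $|\det|=1$ forces $|\lambda|=1$. The choice $\lambda=-1$ is impossible, because it would send $C$ to $-C$, and $C\cap(-C)=\emptyset$ by proper convexity. So $\widehat g$ is unique.

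\emph{Group structure.} Define
$$\widehat\Gamma=\{\widehat g \mid g\in\Gamma\}=\{A\in\SL^\pm_{n+1}(\matR)\mid A(C)=C,\ [A]\in\Gamma\}.$$
This set is closed under products and inverses, since both conditions are. The projection $\widehat\Gamma\to\Gamma$ is a homomorphism. It is surjective by the existence step and injective by the uniqueness step. Hence $\widehat\Gamma$ is a subgroup lifting $\Gamma$ isomorphically, and it preserves both components $C$ and $-C$.

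The only point that needs care is the claim $C\cap(-C)=\emptyset$, which rules out the sign ambiguity. It follows directly from properness: the closure of $C$ lies in an open half-space $\{a\cdot x>0\}$, so no nonzero $x$ can lie in both $C$ and $-C$.
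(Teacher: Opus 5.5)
Your proof is correct and follows the paper's argument: fix the sign of a representative so that it preserves the component, then rescale by $|\det|^{-1/(n+1)}$. You also make explicit the uniqueness of such a lift (via $C\cap(-C)=\emptyset$), which is exactly what justifies the paper's unproved assertion that the assignment $\gamma\mapsto\widehat\gamma$ is a group homomorphism.
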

\begin{proof}
    Given $\gamma \in \Gamma$, let $M_\gamma\in \GL_{n+1}(\matR)$ be a representative of $\gamma$. Then $M_\gamma$ either preserves the two connected components of $\pi^{-1}(\Omega)$, or switches them, so up to composing with $-\mathrm{id}$ if necessary, we can assume that $M_\gamma$ preserves each connected component of $\pi^{-1}(\Omega)$. Dividing by $\sqrt[n+1]{|\mathrm{det}(M_\gamma)|}$, we can find a representative that moreover has determinant $\pm 1$. This defines a group homomorphism from $\Gamma$ to $\SL^\pm_{n+1}(\matR)$, which is clearly injective because otherwise $\Gamma$ would contain a nontrivial element acting trivially on $\matRP^{n}$. The image of $\Gamma$ is the desired lift $\widehat\Gamma$.
\end{proof}

We also need to show how to promote a cocompact group action on {$\Omega$} to a cocompact action on the cone above $\Omega$.

Let {$\Gamma$ divide $\Omega$}, and let $\widehat{\Gamma}<\SL^\pm_{n+1}(\matR)$ denote the lift of $\Gamma$ as in Proposition \ref{prop:lift}. We now fix a real number $\lambda>1$. The group $\tilde{\Gamma}$ generated by $\widehat{\Gamma}$ and the matrix $\lambda \cdot \mathrm{id}$ is a discrete subgroup of $\mathrm{GL}_{n+1}(\mathbb{R})$ that divides $C$ in the sense of Definition \ref{defn:divide-cone}. We call the group $\tilde{\Gamma}$ the \emph{lift of $\Gamma$ to $\mathrm{GL}_{n+1}(\mathbb{R})$}. Notice that $\tilde{\Gamma}$ depends on the choice of $\lambda$. However, the intersection $\tilde{\Gamma}\cap \mathrm{SL}_{n+1}(\mathbb{R})=\widehat{\Gamma}$ is independent of this choice.

This construction allows us to easily prove that if $\Omega_1$ and $\Omega_2$ are divisible properly convex domains, then their join $\Omega_1 \ast \Omega_2$ is divisible. Let $\Gamma_1<\mathrm{Aut}(\Omega_1)$ and $\Gamma_2<\mathrm{Aut}(\Omega_2)$ be discrete subgroups that divide $\Omega_1=\matP(C_1)\subset\matP(V_1)$ and $\Omega_2=\matP(C_2)\subset\matP(V_2)$, respectively.  Consider their lifts $\widehat{\Gamma}_1$ and $\widehat{\Gamma}_2$ to $\mathrm{SL}^\pm(V_1)$ and $\mathrm{SL}^\pm(V_2)$ respectively. Now, fix two real numbers $\lambda_1,\lambda_2>1$ which are commensurable in a multiplicative sense (meaning that $\lambda_1^m=\lambda_2^n$ for some positive $m,n$ or, equivalently, that there exists $\lambda >1$ such that $\lambda^n=\lambda_1$ and $\lambda^m=\lambda_2$) and consider the lifts $\tilde{\Gamma}_1$ and $\tilde{\Gamma}_2$ to $\mathrm{GL}(V_1)$ and $\mathrm{GL}(V_2)$ defined using $\lambda_1$ and $\lambda_2$, respectively. 

The direct product $\tilde{\Gamma}_1 \times \tilde{\Gamma}_2$ divides $C_1 \oplus C_2 \subset V_1 \oplus V_2$, and intersects the centre of $\mathrm{GL}(V_1\oplus V_2)$ in the group of scalar matrices $\{\lambda^m \cdot \mathrm{id} \, |\,m \in \matZ\}$. The projectivisation $\matP(\tilde{\Gamma}_1 \times \tilde{\Gamma}_2)<\mathrm{PGL}(V_1 \oplus V_2)$ is a discrete group that divides $\Omega_1 \ast \Omega_2$. It is crucial for the discreteness of $\matP(\tilde{\Gamma}_1 \times \tilde{\Gamma}_2)$ that the lifts $\tilde{\Gamma}_1$ and $\tilde{\Gamma}_2$ are defined using real numbers $\lambda_1,\lambda_2>1$ which are commensurable in a multiplicative sense. Indeed we have that $$\mathrm{Aut}(\Omega_1 \ast \Omega_2) \cong \mathrm{Aut}(\Omega_1) \times \Aut(\Omega_2) \times \matP(\matR_{+}^2),$$ and $(\matP(\tilde{\Gamma}_1 \times \tilde{\Gamma}_2)) \cap \matP(\matR_{+}^2)$ is discrete if and only if $(\tilde{\Gamma}_1 \times \tilde{\Gamma}_2) \cap \matR_+ \cdot \mathrm{id}$ is non-trivial.

\subsection{Symmetric domains} \label{sec:symmetric-domains}
A properly convex domain $\Omega$ is said to be \emph{homogeneous} if $\mathrm{Aut}(\Omega)$ acts transitively on $\Omega$.
Given a properly convex cone $C\subset V$, we define its \emph{dual cone} as $C^{\star}=\{f \in V^{\star}\,|\;f(v)>0\, \forall v \in \overline{C} \setminus \{0\}\}\subset V^{\star}$.
A properly convex domain $\Omega=\matP(C)$ is said to be \emph{symmetric} if it is homogeneous and projectively equivalent to $\Omega^{\star}=\matP(C^{\star})$. All symmetric convex cones are divisible. By a result of Vinberg \cite{V2}, if $\Omega$ is homogeneous, then $\Omega$ is symmetric if and only if $\mathrm{Aut}(\Omega)$ is unimodular. Since cocompact lattices only exist in unimodular Lie groups, if $\Omega$ is homogeneous and divisible then it is necessarily symmetric.

The indecomposable symmetric convex domains have been classified by Koecher and Vinberg. Before stating their theorem, let us introduce some notation.

The Lorentz cone
$$\calL_{n+1} = \{(x_0, \ldots, x_n) \in \matR^{n+1}\, |\, -x_0^2+x_1^2+\dots+x_n^2<0,\, x_0>0\} $$
has $\Aut(\calL_{n+1}) \cong \mathrm{O}^+_{n,1}(\matR)\times \matR_{+}$. Its projectivisation $\Omega \subset \matRP^n$ is the projective model of the real hyperbolic $n$-space, with $\Aut(\Omega)^\circ \cong \SO^+_{n,1}(\matR)$. We include with the same notation for $n=0$ also the half-line $\calL_1 = \{x \in \matR \, |\, x>0\}$, with $\Aut(\calL_1) \cong \matR_+$ and $\Aut(\Omega) = \{ \id \}$. 

For $\matK = \matR, \matC, \matH$ and an integer $m \geq 3$, the cone
$$\calP_m(\matK) = \{\mathrm{positive \ definite \ hermitian}\ m \times m\ \mathrm{matrices \ over \ } \matK \}$$
has $\Aut(\calP_m(\matK)) \cong \GL_m(\matK)$, acting by $\mathrm{GL}_{m}(\matK) \times \calP_m(\matK) \ni (A,M) \mapsto AMA^* \in \calP_m(\matK)$, and real dimension
$$n + 1 = \begin{cases} (m^2 + m)/2 & \mbox{if}\ \matK = \matR, \\
m^2 & \mbox{if}\ \matK = \matC, \\
2m^2 - m & \mbox{if}\ \matK = \matH.
\end{cases}$$
Its projectivisation $\Omega \subset \matRP^n$ has $\Aut(\Omega)^\circ \cong \PSL_m(\matK)$. We include with the same notation also the octonionic case $\matK = \matO$ with $m = 3$ and $n+1=27$, where $\calP_3(\matO)$ is a convex cone in the exceptional Jordan algebra $\mathcal{J}$ of $3 \times 3$ hermitian octonionic matrices, and $\Aut(\calP_3(\matO))^\circ \cong E_6^{-26} \times \matR_+$ and $\Aut(\Omega)^\circ \cong E_6^{-26}$. Here $E_6^{-26}$ denotes the exceptional Lie group of determinant-preserving linear transformations of $\mathcal{J}$.

\begin{thm}[Koecher \cite{Koe}, Vinberg \cite{V}]\label{teo:indecomposable-symmetric}
    A symmetric properly convex domain is a join of indecomposable symmetric domains. A symmetric, indecomposable, properly convex domain is projectively equivalent to the projectivisation of one and only one of the above convex cones $\calL_{n+1}$, $\calP_m(\matR)$, $\calP_m(\matC)$, $\calP_m(\matH)$, or $\calP_3(\matO)$. 
\end{thm}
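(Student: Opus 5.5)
The statement is the Koecher--Vinberg classification, so the plan is to translate symmetric properly convex cones into Euclidean Jordan algebras and then use the Jordan--von Neumann--Wigner classification of simple ones.

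\emph{First assertion.} Let $\Omega=\matP(C)$ be symmetric. Write $C=C_1\oplus\dots\oplus C_k$ as in \eqref{eq:indecomposable-deomposition}. By \eqref{eq:automorphism-divisible}, a finite-index subgroup of $\Aut(C)$ is $\prod_i\Aut(C_i)$. Transitivity of $\Aut(C)$ on $C$ therefore restricts to transitivity of the identity component on $C$, which is a product. Hence each $\Aut(C_i)$ acts transitively on $C_i$.

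Duality is compatible with sums: $(C_1\oplus C_2)^\star=C_1^\star\oplus C_2^\star$. A linear isomorphism $C\to C^\star$ must map indecomposable summands onto indecomposable summands, up to permutation. Composing with automorphisms permuting projectively equivalent factors, we may assume it maps $C_i$ to $C_i^\star$. So each $\Omega_i$ is homogeneous and self-dual, i.e.\ symmetric. Alternatively, one can use Vinberg's unimodularity criterion \cite{V2}: a product of Lie groups is unimodular iff each factor is.

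\emph{Second assertion: algebraisation.} Take $C$ indecomposable, homogeneous and self-dual. Use Vinberg's characteristic function $\varphi(x)=\int_{C^\star}e^{-f(x)}\,df$. Its Hessian $\nabla^2\log\varphi$ gives an $\Aut(C)$-invariant Riemannian metric on $C$, and the duality gives a symmetry at a base point $e$. Following Koecher, define a product on $V=T_eC$ by $x\circ y=-\tfrac12 D^3\log\varphi(e)(x,y,\cdot)$ raised via the metric, with unit $e$. Self-duality and homogeneity make this a Euclidean (formally real) Jordan algebra with $C$ equal to its cone of squares. Indecomposability of $C$ corresponds exactly to simplicity of the algebra, since ideals give orthogonal splittings of the cone.

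This step is the main obstacle. Verifying the Jordan identity requires the symmetric-space structure: the map $x\mapsto x^{-1}$ must be realised as the geodesic symmetry at $e$. I would first try to deduce the identity from the fact that the quadratic representation $P(x)$ lies in $\Aut(C)$ and satisfies $P(x)e=x^2$.

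\emph{Classification and uniqueness.} Apply the Jordan--von Neumann--Wigner theorem. The simple Euclidean Jordan algebras are the spin factors $\matR\oplus\matR^{n}$, whose cones of squares are $\calL_{n+1}$, and the Hermitian matrix algebras $H_m(\matK)$ for $m\ge3$, $\matK=\matR,\matC,\matH$, plus $H_3(\matO)$, whose cones are $\calP_m(\matK)$. In low rank, $H_2(\matK)$ is a spin factor, and $\matR$ itself gives $\calL_1$.

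For uniqueness, projective equivalence of cones yields isomorphism of their automorphism groups, and hence of the identity components of $\Aut(\Omega)$. These groups are pairwise distinct: $\SO^+_{n,1}$, $\PSL_m(\matR)$, $\PSL_m(\matC)$, $\PSL_m(\matH)$ and $E_6^{-26}$ are distinguished by their real rank and root multiplicities. The only possible low-dimensional coincidences are excluded by the restriction $m\ge3$.
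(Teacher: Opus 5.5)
The paper gives no proof of this statement; it quotes Koecher and Vinberg. Your outline follows the standard route (symmetric cone $\Rightarrow$ Euclidean Jordan algebra $\Rightarrow$ Jordan--von Neumann--Wigner), and your classification and uniqueness steps are fine. However, the step you flag as the main obstacle is a genuine gap, and it is the whole content of Koecher's theorem.

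For \emph{every} properly convex cone and every base point, the product $x\circ y$ defined from $D^3\log\varphi(e)$ is commutative, has $e$ as unit (by log-homogeneity of $\varphi$), and makes $D^2\log\varphi(e)$ associative. This holds simply because $D^3\log\varphi(e)$ is totally symmetric. What homogeneity and self-duality must supply is the Jordan identity, a degree-four identity, together with $\overline{C}=\{x\circ x : x\in V\}$. Your sketch establishes neither.

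The plan you propose presupposes the conclusion. For $P(x)=2L(x)^2-L(x\circ x)$, the identity $P(x)e=x\circ x$ is trivial, but $P(x)\in\Aut(C)$ is normally \emph{deduced} from the Jordan identity. Suppose instead you define $P(x)=gg^{*}$ for $g\in\Aut(C)$ with $ge=x$, after choosing an inner product with $C^\star=C$ and a base point $e$ with $\mathrm{Stab}(e)=\Aut(C)\cap\mathrm{O}(V)$. Then $P(x)\in\Aut(C)$ comes for free. But proving that $P$ is quadratic in $x$ with $P(x)e=x\circ x$, and extracting the Jordan identity from the equivariance of $P$, is exactly the work that remains. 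Compare Faraut--Kor\'anyi, Ch.~III, where $L(x)$ is taken to be the unique element of the symmetric part of the Lie algebra of $\Aut(C)$ with $L(x)e=x$, and the Jordan identity still has to be proved from there.

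Your duality argument for the first assertion is also incorrect as written. An isomorphism $C\to C^\star$ may carry a summand $C_i$ onto $C_j^\star$ with $C_i\not\cong C_j$, and no automorphism of $C$ can undo this. For example, let $C_1=\{x\in\matR^5 : \left(\begin{smallmatrix} x_1&x_4&x_5\\ x_4&x_2&0\\ x_5&0&x_3\end{smallmatrix}\right)\ \text{positive definite}\}$ be Vinberg's homogeneous non-self-dual cone. Then $C=C_1\oplus C_1^\star$ is homogeneous and isomorphic to $C^\star$ by swapping the factors, yet neither factor is isomorphic to its own dual.

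So ``projectively equivalent to $\Omega^\star$'' has to be understood in the usual sense: $C^\star=C$ for some inner product, equivalently (by Vinberg) $\Aut(\Omega)$ unimodular. This stronger notion is also what your algebraisation needs. For an arbitrary isomorphism $A\colon C\to C^\star$, the map $x\mapsto A^{-1}x^{*}$, where $x^{*}=-d(\log\varphi)_x$, squares to $A^{-1}A^{\top}$, so it is an involution only when $A$ is symmetric. With the inner-product definition, the indecomposable summands are mutually orthogonal and each is self-dual. Your alternative argument via unimodularity of $\prod_i\Aut(C_i)$ is correct as it stands.
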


\subsection{Centre and Zariski closure}
We recall here two important results by Vey and Benoist that we will use heavily in the following. We begin by recalling a result of Vey, expanded by Benoist, which characterises the centraliser in $\mathrm{GL}_{n+1}(\matR)$ of a group that divides a convex cone (recall Definition \ref{defn:divide-cone}).

\begin{thm}[Benoist \cite{B2}, Vey \cite{Vey}]\label{teo:Vey} 
    Let $C \subset \matR^{n+1}$ be a properly convex cone, $\widetilde{\Gamma} <\mathrm{Aut}(C)$ a group that spans $C$, and $H$ the centraliser of $\widetilde{\Gamma}$ in $\mathrm{GL}_{n+1}(\matR)$. Then:
    \begin{enumerate}
        \item \label{it:vey:1} There exists a direct sum decomposition $\matR^{n+1}=V_1 \oplus \dots \oplus V_{k}$ of $\matR^{n+1}$ into $\widetilde{\Gamma}$-invariant subspaces such that $$H=\{A \in \mathrm{GL}_{n+1}(\matR) \, |\, \forall i=1,\dots,k\; \exists \lambda_i \in \matR^*:\, Av=\lambda_i v\; \forall v \in V_i\}\cong \mathbb{R}^k.$$
        \item \label{it:vey:2} The cone $C$ decomposes as $C=C_1 \oplus \dots \oplus C_k$, where each $C_i \subset V_i$ is a properly convex cone.
        \item \label{it:vey:3} If $\widetilde{\Gamma}$ is discrete, the action of $\widetilde{\Gamma}$ on each $V_i$ is irreducible, each convex cone $C_i\subset V_i$ is divisible and the centre $Z=\widetilde{\Gamma} \cap H$ of $\widetilde{\Gamma}$ is a lattice in $H$, and is thus isomorphic to $\matZ^k$.
    \end{enumerate}
    If $\widetilde{\Gamma}$ is Zariski connected, then the  cones $C_1, \ldots, C_k$ are indecomposable. Otherwise, $\widetilde{\Gamma}$ has a Zariski-connected subgroup of finite index. 
\end{thm}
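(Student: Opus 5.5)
The plan is to analyse the commutant algebra $D=\{A\in \mathrm{End}(\matR^{n+1})\,|\,A\gamma=\gamma A\ \forall\gamma\in\widetilde\Gamma\}$, whose group of units is $H$. I want to show that $D\cong \matR^k$, acting by block scalars on a decomposition $V_1\oplus\dots\oplus V_k$. The cone geometry should then force $C$ to split along this decomposition, and discreteness should give the lattice statement.

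\emph{Step 1 (semisimplicity).} I first show that the linear action of $\widetilde\Gamma$ on $\matR^{n+1}$ is semisimple. The tool is the Vinberg characteristic function $\varphi_C(x)=\int_{C^\star}e^{-f(x)}\,df$. The map $x\mapsto -d\log\varphi_C(x)$ is an $\Aut(C)$-equivariant diffeomorphism $C\to C^\star$, and its level sets in $C$ give $\widetilde\Gamma$-invariant convex hypersurfaces. Let $W$ be a $\widetilde\Gamma$-invariant subspace. Using that $\widetilde\Gamma\cdot K=C$ for a compact $K$, I would show the following: the image of $C$ in $\matR^{n+1}/W$ is again a spanned properly convex cone, and $W\cap \overline C$ spans $W$ (or the dual statement holds for the annihilator of $W$ in $C^\star$). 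A $\widetilde\Gamma$-invariant complement of $W$ is then obtained by a barycentre construction in $C^\star$, namely the centre of mass of a compact set meeting every orbit, averaged via $\varphi_C$. I expect this to be the main obstacle. If the barycentre argument resists, I would instead invoke Benoist's result that the Zariski closure of a spanning group is reductive.

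\emph{Step 2 (structure of $D$).} By semisimplicity, $D$ is a product of matrix algebras over $\matR,\matC,\matH$. I then rule out non-real division algebras and multiplicities bigger than one.

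For reality: let $A\in D$ be close to the identity. Then $A$ commutes with $\widetilde\Gamma$, so the function $x\mapsto d_C(x,Ax)$ is $\widetilde\Gamma$-invariant. By cocompactness it is bounded wherever $AC\cap C\neq\emptyset$. If a factor of $D$ contained $\matC$, the compact circle group $e^{i\theta}$ acting on that factor would give a compact group commuting with $\widetilde\Gamma$. Its elements would preserve $C$, since they move points a bounded amount and are deformable to the identity. A compact group preserving $C$ fixes a point of $C$, namely the barycentre of an orbit. A nontrivial rotation block fixing a point of the open cone while preserving $C$ yields an invariant properly convex cone on which $e^{i\theta}$ acts, which is impossible because the circle has no fixed nonzero vector in that block.

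For multiplicity one: if two irreducible summands $V_i\cong V_j$ were isomorphic, the graphs of $t\cdot\mathrm{iso}$ would give a one-parameter family of invariant subspaces. Intersecting these with $C$ would contradict the spanning property, as in Step 1.

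Hence $D=\{\mathrm{diag}(\lambda_1,\dots,\lambda_k)\}$, which proves item \eqref{it:vey:1}.

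\emph{Step 3 (decomposition of the cone and discreteness).} Take $h_t\in H$ acting by $t>0$ on $V_1$ and by $1$ on the other factors. The element $h_t$ preserves $C$: the set of $t$ for which it does is open and closed, by the bounded-displacement argument of Step 2. Applying $h_t$ with $t\to 0$ and $t\to\infty$ to $x=\sum x_i\in C$ shows that the projections $C_i$ of $C$ onto $V_i$ lie in $\overline C$ and are properly convex. From this one gets $C=C_1\oplus\dots\oplus C_k$, which is item \eqref{it:vey:2}.

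When $\widetilde\Gamma$ is discrete, I argue as follows. Irreducibility on each $V_i$ comes from Steps 1--2. $\widetilde\Gamma$ restricted to $V_i$ spans $C_i$ because the projection of $K$ is compact, and it is discrete because the kernel of the projection lies in $H\cap\widetilde\Gamma$. The quotient $H/(\widetilde\Gamma\cap H)$ is compact: the group $H\widetilde\Gamma$ acts on $C$ with compact quotient and properly, so $\widetilde\Gamma\cap H$ is cocompact in $H\cong\matR^k$, hence $\cong\matZ^k$. This gives item \eqref{it:vey:3}.

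Finally, if $\widetilde\Gamma$ is Zariski connected, every finite-index subgroup has the same Zariski closure, so each $V_i$ is strongly irreducible. A decomposition of $C_i$ would then produce a finite-index subgroup preserving a nontrivial splitting of $V_i$, contradicting strong irreducibility. In general, the identity component of the Zariski closure meets $\widetilde\Gamma$ in a finite-index subgroup, which is the Zariski-connected subgroup required in the last sentence of the statement.
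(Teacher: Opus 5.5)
The paper does not prove this statement; it quotes it from Benoist \cite{B2} and Vey \cite{Vey}. Your proposal therefore has to stand on its own, and it has genuine gaps.

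\textbf{Steps 1 and 2.} Step~1 is not just unproved: it is false in the generality you use it for. Items (1)--(2) concern arbitrary spanning groups. Take a homogeneous non-symmetric cone, e.g.\ Vinberg's five-dimensional cone. The group $\widetilde\Gamma=\Aut(C)^\circ$ spans $C$ but is not reductive, by Vinberg's unimodularity criterion recalled in Section~\ref{sec:symmetric-domains}. So its linear action is not completely reducible, and (1)--(2) cannot be reached through semisimplicity. For discrete $\widetilde\Gamma$ semisimplicity is true, but it is the substance of the irreducibility statement in (3). Falling back on Benoist's reductivity result amounts to citing what you are trying to prove.

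Step~2 does not close either. What you can get cheaply is $H^\circ\subset\Aut(C)$: if $A\in H$ is close to $I$ then $AK\subset C$, so $AC=\widetilde\Gamma AK\subset C$. But a point $x_0\in C$ fixed by the circle $e^{i\theta}$ has zero component in the rotated block $V_j$. Hence the projection of $C$ to $V_j$ is all of $V_j$, and no properly convex cone appears. The contradiction must instead come from the fact that $\bigoplus_{i\neq j}V_i$ is a proper $\widetilde\Gamma$-invariant subspace meeting $C$. Excluding this needs a lemma you do not state. For discrete $\widetilde\Gamma$ it follows from cohomological dimension: a torsion-free finite-index subgroup would act properly and cocompactly on the contractible manifolds $C$ and $C\cap\bigoplus_{i\neq j}V_i$, which have different dimensions.

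\textbf{Item (3): divisibility.} This is the most concrete failure. The restriction of $\widetilde\Gamma$ to $V_i$ need not be discrete, and a central kernel does not make it so. Let $\Gamma_0<\SO^+_{2,1}(\matR)\times\SO^+_{2,1}(\matR)$ be an irreducible cocompact lattice, fix $\lambda,\mu>1$, and let $\widetilde\Gamma=\{(g_1\lambda^a,g_2\mu^b)\,|\,(g_1,g_2)\in\Gamma_0,\ a,b\in\matZ\}$ act on $\matR^3\oplus\matR^3$ with $C=\calL_3\oplus\calL_3$.
\begin{itemize}
\item It is a cocompact lattice in $\SO^+_{2,1}(\matR)^2\times\matR_+^2$, hence discrete and spanning.
\item The kernel of its projection to $\GL(V_1)$ is $\{(I,\mu^b)\}\subset H$.
\item Yet the projection $\{g_1\lambda^a\}$ is not discrete, even modulo scalars, because $\{g_1\}$ is dense in $\SO^+_{2,1}(\matR)$.
\end{itemize}
This is exactly the situation the paper meets in Section~\ref{sec:spazi-simmetrici} when some $\Gamma_i$ is dense in $G_i$. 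So ``$C_i$ is divisible'' needs a separate argument when the image of $\widetilde\Gamma$ in $\PGL(V_i)$ is not discrete. One would show that the indecomposable factors of $\matP(C_i)$ are then symmetric, hence divisible by Borel's theorem. This is where input such as Benoist's Zariski-closure theorem enters.

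\textbf{Item (3): the lattice claim.} Your argument is circular. $H^\circ\widetilde\Gamma$ acts properly on $C$ only if it is closed. Since $\prod_i\Aut(C_i)\cong\prod_i\Aut(\matP(C_i))\times H^\circ$, closedness of $H^\circ\widetilde\Gamma$ is equivalent to discreteness of the image of $\widetilde\Gamma$ in $\prod_i\Aut(\matP(C_i))$. Given cocompactness, that is in turn equivalent to $\widetilde\Gamma\cap H$ being a lattice. An irrational lattice in $\matR\times\matR$ shows that a lattice in $L\times\matR^k$ with $\matR^k$ central need not meet $\matR^k$ in a lattice. So genuine information on the groups $\Aut(\matP(C_i))$ is needed.

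\textbf{What works.} Your derivation of (2) from $H^\circ\subset\Aut(C)$ is fine. Your last paragraph works for discrete $\widetilde\Gamma$. In general, note instead that a Zariski-connected $\widetilde\Gamma$ preserves each indecomposable factor of $C_i$, so the corresponding block homotheties lie in $H$, contradicting (1).
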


We also recall Benoist's Theorem on the Zariski closure of a group that divides a convex cone:
\begin{thm}[Benoist \cite{B2}]\label{teo:Benoist}
Let $C \subset \matR^{n+1}$ be a properly convex cone, and let $\widetilde{\Gamma}<\mathrm{Aut}(C)$ be a discrete, Zariski-connected group that divides $C$. Let $\matR^{n+1}=V_1 \oplus \dots \oplus V_k$ be the direct sum decomposition of $\matR^{n+1}$ into $\widetilde{\Gamma}$-invariant subspaces given in Theorem \ref{teo:Vey} and $C = C_1 \oplus \dots \oplus C_k$ be the corresponding decomposition of $C$ into a direct sum of indecomposable convex cones $C_i \subset V_i$.

The Zariski closure $Z$ of $\widetilde{\Gamma}$ in $\mathrm{GL}_{n+1}(\matR)$ decomposes as 
$$Z=H_1 \times \dots \times H_k,$$ where each $H_i$ is a reductive subgroup of $\mathrm{GL}(V_i)$ with the following properties:
\begin{enumerate}
\item \label{it:benoist:1} If $\Omega_i=\matP(C_i)$ is symmetric, then $H_i$ and $\mathrm{Aut}(C_i)$ are commensurable. In particular $H_i^\circ \cong \mathrm{Aut}(C_i)^{\circ}$ and $\matP(H_i)^{\circ} \cong \mathrm{Aut}(\Omega_i)^{\circ}$ is one of the groups listed in Theorem \ref{teo:indecomposable-symmetric}. 
\item \label{it:benoist:2} If $\Omega_i=\matP(C_i)$ is not symmetric, then $H_i=\mathrm{GL}(V_i)$ and $\matP(H_i)^{\circ}=\mathrm{PSL}(V_i)$.
\end{enumerate}
\end{thm}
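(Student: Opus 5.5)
The statement is Benoist's theorem, quoted from \cite{B2}; here is the strategy I would follow to reprove it.

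First I reduce to a single indecomposable factor. Theorem \ref{teo:Vey} gives the $\widetilde{\Gamma}$-invariant splitting $\matR^{n+1}=V_1\oplus\dots\oplus V_k$ with $C=C_1\oplus\dots\oplus C_k$. So $\widetilde{\Gamma}<\prod_i \GL(V_i)$, and its Zariski closure $Z$ lies in $\prod_i \overline{\rho_i(\widetilde{\Gamma})}^{Z}$, where $\rho_i$ is the restriction to $V_i$.

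To show that $Z$ is the whole product, I use the central lattice $\widetilde{\Gamma}\cap H\cong\matZ^k$ of Theorem \ref{teo:Vey}(\ref{it:vey:3}). It contains elements acting by independent homotheties on the $V_i$, so $Z$ contains the full torus $\matR_+^k$ of independent homotheties. The semisimple parts are handled by a Goursat-type argument. If $\rho_i$ and $\rho_j$ were related on the semisimple parts by an isomorphism of Zariski closures, then by Zariski connectedness $\widetilde{\Gamma}$ would preserve a graph-type subspace of $V_i\oplus V_j$. That subspace would enlarge the centraliser $H$ beyond the diagonal $\matR^k$, contradicting Theorem \ref{teo:Vey}(\ref{it:vey:1}). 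So from now on I assume $k=1$: $C$ is indecomposable and $\widetilde{\Gamma}$ acts strongly irreducibly on $V$, because it is Zariski connected. The Zariski closure $H$ is then reductive, being irreducible, and its centre consists of scalars.

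Second, I show that $H^\circ$ preserves $C$. Since $\widetilde{\Gamma}$ divides $C$, it contains proximal elements: a cocompact action produces elements with an attracting fixed point on $\partial\Omega$. The attracting points are dense in the proximal limit set $\Lambda\subset\partial\Omega$, whose closed convex hull is $\overline{\Omega}$ by minimality and cocompactness. The Zariski closure of $\Lambda$ is the unique closed $H$-orbit $F\subset\matP(V)$, namely the highest-weight orbit of the proximal irreducible representation. I claim $\Lambda=F$, or at least that one lift $\widetilde{F}$ of $F$ to $V$ lies in $\partial C$ and has convex hull with interior $C$. Granting this, $H^\circ$ preserves $\widetilde{F}$ and hence $C$, so $H^\circ<\Aut(C)$. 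I expect this to be the main obstacle: it requires showing that the proximal limit set of a dividing group fills the flag-variety orbit $F$, not merely that it is Zariski dense in $F$. My first attempt would go through Benoist's proximal-limit-set theory, which shows that the limit set is the whole closed orbit when the Zariski closure is connected and irreducible. If that fails, I would use the dual cone $C^\star$ together with duality of the limit sets to trap $C$ between the hulls of $F$ and of the dual closed orbit.

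Third, I finish with a dichotomy. If $H^\circ$ is all of $\GL(V)^\circ$, then $\matP(H)^\circ=\PSL(V)$; this group cannot preserve a properly convex set, so this case occurs exactly when $\Omega$ is not homogeneous, which gives (\ref{it:benoist:2}) once the next case is excluded for non-symmetric $\Omega$. Otherwise $H^\circ<\Aut(C)$ acts properly on $C$, and I argue that $H^\circ$ acts transitively. Suppose an orbit $H^\circ\cdot x\cong H^\circ/K$ were not open. Then $\widetilde{\Gamma}<H$ is discrete and stabilises this orbit, and since $C/\widetilde{\Gamma}$ is compact while orbits of a proper action are closed, $\widetilde{\Gamma}$ acts cocompactly on $H^\circ\cdot x$. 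This would make the virtual cohomological dimension of $\widetilde{\Gamma}$ equal to $\dim H^\circ/K<\dim C$, contradicting the fact that it equals $\dim C$ because $\widetilde{\Gamma}$ divides $C$. Hence $C$ is homogeneous and divisible, so it is symmetric by Vinberg's unimodularity criterion recalled in Section \ref{sec:symmetric-domains}. Finally, $H^\circ$ is a closed subgroup of $\Aut(C)^\circ$ acting transitively and containing a lattice of $\Aut(C)^\circ$; by Borel density it equals $\Aut(C)^\circ$. This gives the commensurability in (\ref{it:benoist:1}), and Theorem \ref{teo:indecomposable-symmetric} then identifies $\matP(H)^\circ$.
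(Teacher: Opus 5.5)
The paper gives no proof of this statement; it quotes it from Benoist \cite{B2}. Taken on its own, your outline breaks down at its central step.

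\textbf{Step~2 is false as stated.} You assert, with no hypothesis on $H$, that the proximal limit set $\Lambda$ equals the closed orbit $F$, so that $H^\circ$ preserves $C$. This cannot hold. It would make the first case of your Step~3 empty, and so would prove that every indecomposable divisible domain is symmetric. Item~\eqref{it:benoist:2} concerns exactly the non-symmetric ones, which exist (e.g.\ the domain of Theorem~\ref{thm:CLM}). For those, $H=\GL(V)$ and $F=\matP(V)$, while $\Lambda\subset\partial\Omega\subsetneq F$.

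The principle you cite is also false. Zariski density of $\widetilde\Gamma$ in $H$ only makes $\Lambda$ \emph{Zariski dense} in $F$. Zariski-dense Schottky subgroups of $\SO^+_{m,1}(\matR)$, whose limit sets are Cantor sets in the sphere $F$, already show this. What must actually be proved is the implication ``$H^\circ\neq\GL(V)^\circ\Rightarrow H^\circ$ preserves $C$''. That is the whole content of Benoist's theorem, and your proposal gives no argument for it.

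\textbf{The fallback does not supply it.} Trapping $C$ between the cones $\Omega_{\min}(H)$ and $\Omega_{\max}(H)$ built from $F$ and the dual closed orbit does essentially work once you know that $H$ itself preserves some properly convex cone. Then $\Omega_{\max}(H)\subseteq\Omega\subseteq\Omega_{\min}(H)\subseteq\Omega_{\max}(H)$ forces $\Omega$ to be $H$-invariant. But a proper subgroup $H$ need not have this property, and excluding the bad cases is exactly where cocompactness must enter.

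For instance, let $\Gamma_1,\Gamma_2<\SL_3(\matR)$ be Zariski dense and divide domains $\matP(C_1),\matP(C_2)\subset\matRP^2$. Then $\Gamma_1\times\Gamma_2$ acting on $\matR^3\otimes\matR^3$ has the following properties:
\begin{itemize}
\item it is strongly irreducible and Zariski dense in $H=\SL_3(\matR)\times\SL_3(\matR)\subsetneq\GL_9(\matR)$;
\item it preserves the properly convex cone spanned by $C_1\otimes C_2$, whose closure is the convex hull of (the lift of) $\Lambda$;
\item its limit set $\Lambda=\partial\matP(C_1)\times\partial\matP(C_2)$ is strictly smaller than the Segre variety $F$;
\item $H$ contains proximal elements with negative top eigenvalue, so no $H$-invariant properly convex cone exists and the trap is vacuous.
\end{itemize}
Every ingredient your Step~2 uses holds here. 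The example is excluded only because it does not divide: its virtual cohomological dimension is $4$, not $8$.

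\textbf{Step~1 has the same defect.} A Goursat relation between the Zariski closures of $\rho_i(\widetilde\Gamma)$ and $\rho_j(\widetilde\Gamma)$ yields an invariant graph subspace only if it is induced by a $\widetilde\Gamma$-equivariant isomorphism $V_i\cong V_j$. Take a cocompact lattice of $\SO^+_{2,1}(\matR)$ acting diagonally on $\matR^3\oplus\mathrm{Sym}^4\matR^2$, preserving the Lorentz cone and the cone of sums of fourth powers. Its Zariski closure is not a product, yet its commutant is only $\matR^2$. So the product decomposition must also be extracted from cocompactness, which your argument never uses.

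Your Step~3 (transitivity via cohomological dimension, then Vinberg's criterion and Borel density) is sound once the missing implication is supplied.
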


\begin{rem}\label{oss:non-simmetrico-discreto}
In particular, it follows from Theorem \ref{teo:Benoist} that if $\Omega \subset \matRP^n$ is a non-symmetric, indecomposable, divisible domain, then $\mathrm{Aut}(\Omega)^{\circ}$ is a Zariski-dense subgroup of the simple Lie group $\mathrm{PGL}_{n+1}(\matR)$. However, a Zariski-dense subgroup of a simple algebraic Lie group is either discrete or dense. The latter case is excluded, because $\mathrm{Aut}(\Omega)^{\circ}$ preserves $\Omega$.  Therefore we necessarily have that $\mathrm{Aut}(\Omega)^{\circ}$ is discrete.
\end{rem}

We record two consequences of Theorems \ref{teo:Vey} and \ref{teo:Benoist} that will be of primary importance in what follows:

\begin{cor} \label{cor:not-product} 
    If $\Gamma$ divides $\Omega$, the following are equivalent:
    \begin{enumerate}
        \item $\Omega$ is decomposable;
        \item $\Gamma$ virtually has infinite centre. 
    \end{enumerate}
\end{cor}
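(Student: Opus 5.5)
We prove both implications by passing to the lift $\widetilde{\Gamma}<\GL_{n+1}(\matR)$ of $\Gamma$, built in Section \ref{sec:lifts} from $\widehat{\Gamma}<\SL^\pm_{n+1}(\matR)$ and a homothety $\lambda\cdot\id$. The lift is discrete and divides the cone $C$ over $\Omega$. We then read off the centre from Theorem \ref{teo:Vey}. Throughout we may pass to a finite-index subgroup: the statement is about $\Gamma$ only virtually, and a finite-index subgroup of $\Gamma$ still divides $\Omega$. Using the last assertion of Theorem \ref{teo:Vey}, we may therefore assume $\widetilde{\Gamma}$ is Zariski connected, after replacing $\Gamma$ by the image in $\PGL_{n+1}(\matR)$ of a finite-index Zariski-connected subgroup of $\widetilde{\Gamma}$, which still contains a power of $\lambda\cdot\id$.

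(1)$\Rightarrow$(2). Suppose $\widetilde{\Gamma}$ is Zariski connected. Theorem \ref{teo:Vey} gives $\matR^{n+1}=V_1\oplus\dots\oplus V_k$ and $C=C_1\oplus\dots\oplus C_k$ with the $C_i$ indecomposable. Since the indecomposable decomposition \eqref{eq:indecomposable-deomposition} is unique, $\Omega$ decomposable forces $k\geq 2$. By item (\ref{it:vey:3}), the centre $Z=\widetilde{\Gamma}\cap H$ is a lattice in $H\cong\matR^k$, so $Z\cong\matZ^k$. The scalar matrices in $\widetilde{\Gamma}$ form only the infinite cyclic group generated by $\lambda\cdot\id$, up to sign. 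Under projection, $Z$ maps into the centre of $\Gamma\cong\widetilde{\Gamma}/\langle\lambda\,\id\rangle$, and the image $Z/(Z\cap\matR^*\id)$ contains a copy of $\matZ^{k-1}$. This copy is infinite when $k\ge2$, so $\Gamma$ (virtually) has infinite centre.

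(2)$\Rightarrow$(1). Suppose a finite-index subgroup $\Gamma'$ has an infinite centre; shrink further so that $\widetilde{\Gamma'}$ is Zariski connected. Here I expect the main obstacle: an element $g\in\Gamma'$ central in $\PGL$ lifts to some $\tilde g\in\widetilde{\Gamma'}$ that commutes with every $\tilde\gamma$ only up to a scalar. That is, $\tilde\gamma\tilde g\tilde\gamma^{-1}=c(\tilde\gamma)\tilde g$, where $c$ is a homomorphism to $\matR^*$. Taking determinants gives $c(\tilde\gamma)^{n+1}=1$, so $c$ takes values in $\{\pm1\}$. Its kernel has index at most $2$, and replacing $\tilde g$ by $\tilde g^2$ makes it genuinely central. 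So after passing to an index-$\le 2$ subgroup, the centre of $\Gamma'$ lifts into the centraliser $H$ of $\widetilde{\Gamma'}$. Infinitely many distinct central elements of $\Gamma'$ then yield elements of $Z=\widetilde{\Gamma'}\cap H$ that are not scalar multiples of one another. Hence $Z\cong\matZ^k$ is not contained in $\matR^*\cdot\id$, which forces $H\neq\matR^*\id$, i.e. $k\geq2$. By item (\ref{it:vey:2}), $C=C_1\oplus\dots\oplus C_k$ is then a non-trivial splitting, so $\Omega=\matP(C_1)\ast\dots\ast\matP(C_k)$ is decomposable.

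Note that "virtually" is needed only in the direction (1)$\Rightarrow$(2), to ensure that the $C_i$ are indecomposable and that $k$ equals the number of factors. In the other direction, any $k\ge2$ already gives decomposability, so Zariski connectedness there is only a convenience.
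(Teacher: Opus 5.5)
Your proof is correct and follows the paper's argument. The paper's proof simply states that the equivalence follows from Theorem \ref{teo:Vey}--\eqref{it:vey:3} applied to the lift $\widetilde{\Gamma}<\GL_{n+1}(\matR)$ of Section \ref{sec:lifts}; you have written out the details, namely passing to a Zariski-connected subgroup, taking $Z\cong\matZ^k$ modulo the cyclic group of scalars, and conversely using $k\geq 2$ to get a splitting via item \eqref{it:vey:2}.

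The ``main obstacle'' you flag in (2)$\Rightarrow$(1) does not actually arise. The map $\widehat{\Gamma'}\to\Gamma'$ is an isomorphism by Proposition \ref{prop:lift}, so central elements of $\Gamma'$ lift to central elements. Equivalently, $c(\tilde\gamma)\,\id=(\tilde\gamma\tilde g\tilde\gamma^{-1})\tilde g^{-1}$ preserves $C$ and has determinant $1$, which forces $c\equiv 1$. This also spares you the slightly loose step where the index-$\le 2$ kernel of $c$ depends on $g$.
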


\begin{proof}
    The equivalence directly follows from Benoist and Vey's Theorem \ref{teo:Vey}--\eqref{it:vey:3} applied to the lift $\widetilde{\Gamma}$ of $\Gamma$ to $\mathrm{GL}_{n+1}(\mathbb{R})$ (see Section \ref{sec:lifts}).
\end{proof}
    
\begin{cor}\label{cor:reducible-implies-decomposable}
    If a product $\Gamma \cong \Gamma_1 \times \Gamma_2$ of non-trivial groups divides $\Omega$, then $\Omega$ is decomposable.
\end{cor}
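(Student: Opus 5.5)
The plan is to argue by contradiction: assume $\Omega \subset \matRP^n$ is indecomposable and divided by $\Gamma \cong \Gamma_1 \times \Gamma_2$ with both factors non-trivial, and derive a contradiction from Benoist's Theorem \ref{teo:Benoist} together with Schur's lemma. We may assume $n \geq 2$. Indeed, in dimension $1$ every properly convex domain is a segment, which is the join of two points and hence decomposable. In dimension $0$ the group $\Gamma$ is trivial.

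\emph{Step 1: reduce to the Zariski-connected case.} Take the lift $\widetilde\Gamma$ of $\Gamma$ to $\GL_{n+1}(\matR)$ from Section \ref{sec:lifts}. By the last part of Theorem \ref{teo:Vey}, it has a Zariski-connected finite-index subgroup $\widetilde\Gamma'$, whose projectivisation $\Gamma' < \Gamma$ has finite index. Since $\Omega$ is indecomposable, Theorem \ref{teo:Vey} gives $k=1$. So $\widetilde\Gamma'$ acts irreducibly on $\matR^{n+1}$, and by Theorem \ref{teo:Benoist} its Zariski closure $H$ satisfies the following. Either $\matP(H)^\circ = \PSL_{n+1}(\matR)$, or $\matP(H)^\circ \cong \Aut(\Omega)^\circ$ is one of the groups $\SO^+_{n,1}(\matR)$, $\PSL_m(\matK)$, $E_6^{-26}$. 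In every case, for $n \geq 2$, $G := \matP(H)^\circ$ is a connected simple non-abelian Lie group acting irreducibly on $\matR^{n+1}$ (through $H^\circ$). Set $\Gamma_i' = \Gamma_i \cap \Gamma'$. Then $\Gamma_1' \times \Gamma_2'$ has finite index in $\Gamma'$, so it has the same (connected) Zariski closure $G$ in $\PGL_{n+1}(\matR)$.

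\emph{Step 2: normal subgroups of a Zariski-dense group.} Let $G_i$ be the Zariski closure of $\Gamma_i'$. Since $\Gamma_i'$ is normalised by the Zariski-dense subgroup $\Gamma_1'\times\Gamma_2'$, the identity component $G_i^\circ$ is a normal connected algebraic subgroup of $G$. By simplicity, $G_i^\circ$ is either trivial or all of $G$. The groups $\Gamma_1'$ and $\Gamma_2'$ commute, hence so do $G_1$ and $G_2$. If both $G_i^\circ$ equalled $G$, then $G$ would be abelian, which is absurd. So one of them, say $G_2$, is finite, and hence $\Gamma_2'$ is finite. As $\Gamma'$ is infinite (it divides $\Omega$ with $n \geq 1$), $G_1^\circ = G$. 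Since $\Gamma_1'$ has finite index in $\Gamma_1$, the Zariski closure of $\Gamma_1$ also has identity component $G$.

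\emph{Step 3: Schur's lemma.} The factor $\Gamma_2$ is non-trivial and centralises $\Gamma_1$, hence centralises its Zariski closure and in particular $G$. Lift an element $g \in \Gamma_2$ to $\hat g \in \GL_{n+1}(\matR)$. For each $h$ in the lift $H^\circ$ of $G$, we have $\hat g h \hat g^{-1} = c(h) h$ with $c(h)$ a scalar. The map $c$ is a continuous character of the connected semisimple group $[H^\circ,H^\circ]$ with values in $\{\pm1\}$, hence trivial. So $\hat g$ commutes with the irreducible group $[H^\circ,H^\circ]$, which still acts irreducibly because $G$ is simple. Then $\hat g$ preserves the proper cone $C$ and commutes with an irreducible group, so Theorem \ref{teo:Vey}\eqref{it:vey:1} (or Schur's lemma directly) makes it a real scalar. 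Therefore $g = \id$ in $\PGL_{n+1}(\matR)$, a contradiction.

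The main obstacle is the bookkeeping between $\PGL$ and $\GL$ and passing to finite-index subgroups while keeping the product structure and Zariski density. In Step 3, one must also check that centralising in $\PGL$ lifts to genuinely centralising an irreducible linear group; I would handle this via the commutator-subgroup argument above.
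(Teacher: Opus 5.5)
Your proof is correct, and its core is the same as the paper's. The Zariski closure of (a finite-index subgroup of) $\Gamma$ is a simple non-abelian group, the closures of the two commuting normal factors are normal in it, and they cannot both be everything. The differences are in the bookkeeping. The paper gets Zariski density from Theorem \ref{teo:indecomposable-symmetric} plus Borel density in the symmetric case, and from Theorem \ref{teo:Benoist} otherwise. It then asserts in one line that a non-trivial normal subgroup of $\Gamma$ has Zariski closure $G$, tacitly working with connected adjoint algebraic groups, where finite normal subgroups are trivial. You use Theorem \ref{teo:Benoist} uniformly and pass to a Zariski-connected finite-index subgroup and to identity components in the real topology. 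As a result you must treat separately the case where one factor becomes finite. Your Step 3 (a non-trivial element centralising $G$ must act by a scalar) is exactly the justification the paper's one-liner leaves implicit. Your route is longer, but it is robust to the finite-index and component issues that the paper glosses over.

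Two small corrections in Step 3. First, taking determinants gives $c(h)^{n+1}=1$. So $c$ is a continuous $\{\pm1\}$-valued function on the connected group $H^\circ$, hence trivial on all of $H^\circ$, and there is no need to restrict to $[H^\circ,H^\circ]$. This matters for your last step. Theorem \ref{teo:Vey}\eqref{it:vey:1} describes the centraliser of a group that spans $C$, and $[H^\circ,H^\circ]$ need not even preserve $C$ (in the non-symmetric case it is $\SL_{n+1}(\matR)$). Apply it instead to $\widetilde\Gamma'\cap H^\circ$, which has finite index in $\widetilde\Gamma'$ and so still divides $C$; with $k=1$ this gives $\hat g\in\matR^*\cdot\id$. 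Second, if you prefer Schur's lemma directly, the real commutant of an irreducible group can be $\matC$ or $\matH$. You then need one more line showing that a non-real element of it cannot preserve a proper cone.
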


\begin{proof}
    Let us assume, by contradiction, that $\Omega$ is indecomposable. If $\Omega$ is symmetric, by Koecher and Vinberg's Theorem \ref{teo:indecomposable-symmetric}, the group $\Gamma$ is virtually isomorphic to a lattice in $G = \mathrm{SO}^+_{n,1}(\mathbb{R})$, $\mathrm{PSL}_m(\matK)$ with $\matK \in \{ \matR, \matC, \matH \}$, or $E_6^{-26}$, and hence is Zariski dense in $G$ by Borel's density theorem. If instead $\Omega$ is not symmetric, $\Gamma$ is Zariski dense in $G = \PGL_{n+1}(\matR)$ by Benoist's density Theorem \ref{teo:Benoist} (see Remark \ref{oss:non-simmetrico-discreto}). So, in both cases, $\Gamma$ is a Zariski-dense subgroup of a simple (algebraic) Lie group $G$.
    
    For any subgroup $H < G$, let $\overline{H}$ denote its Zariski closure in $G$. Since $\Gamma_i$ is normal in $\Gamma$, its Zariski closure $\overline{\Gamma}_i$ is normal in $\overline{\Gamma} = G$. But $G$ is simple and $\Gamma_i$ is non-trivial, so $\overline{\Gamma}_i = G$. Since moreover $\Gamma_1$ and $\Gamma_2$ commute, $\overline{\Gamma}_1 = G$ and $\overline{\Gamma}_2 = G$ commute. In other words $G$ is abelian, and this is a contradiction.
    \end{proof}

\subsection{Convex projective models}\label{sec:projective-models}

Let $X$ be a smooth $n$-manifold and $G$ a group of analytic
diffeomorphisms of $X$. A \emph{convex projective model} for the pair $(G,X)$ is the datum of a properly convex domain $\Omega \subset \matRP^n$, together with a diffeomorphism $\phi \colon X \rightarrow \Omega$ which carries the elements of $G$ into projective automorphisms of $\Omega$, in the sense that $\phi\circ g \circ \phi^{-1} \in \mathrm{Aut}(\Omega)$ for all $g \in G$.

If $M$ is a manifold with a complete $(G,X)$-structure such that $\Gamma \cong \pi_1(M)$ is the image of the holonomy map and $\phi\colon X \rightarrow \Omega$ is a convex projective model, then $\phi$ descends to a diffeomorphism $$f\colon M\cong X/\Gamma \rightarrow \Omega/\phi \Gamma \phi^{-1}$$ so that $M$ is also naturally endowed with a convex projective structure. 

The symmetric indecomposable domains in Theorem \ref{teo:indecomposable-symmetric} are all convex projective models for pairs of the form $(G,X)$, where $X$ is the symmetric space associated to one of the Lie groups $\mathrm{SO}_{n,1}(\mathbb{R})$, $\mathrm{SL}_m(\matK)$ with $\matK \in \{ \matR, \matC, \matH \}$ or $E_6^{-26}$, and $G=\mathrm{Isom}(X)^{\circ}$. Notice that when $X=\mathbb{H}^n$, then $\Omega$ is the Beltrami--Klein model for hyperbolic space, and $\mathrm{Aut}(\Omega)=\mathrm{PO}_{n,1}(\matR)$ is the full isometry group of $\mathbb{H}^n$. In the other cases we cannot choose for $G$ the full isometry group of $X$. For instance, when $\Omega=\matP(\mathcal{P}_m(\matR))$, the geodesic symmetry $s_0$ at the point corresponding to the identity matrix corresponds to the map $A\rightarrow A^{-1}$, which does not arise via the projectivisation of a linear map of the vector space of real $m \times m$ symmetric matrices.

However, for all connected symmetric spaces, $\mathrm{Isom}(X)$ has finitely many connected components, and therefore for any lattice $\Gamma<\mathrm{Isom}(X)$ the group $\Gamma \cap \mathrm{Isom}(X)^{\circ}$ has finite index in $\Gamma$. We therefore obtain the following:

\begin{prop} \label{prop:projective-model}
    Let $X$ be a symmetric space associated to one of the Lie groups $\mathrm{SO}_{n,1}(\mathbb{R})$, $\mathrm{SL}_m(\matK)$ with $\matK \in \{ \matR, \matC, \matH \}$ or $E_6^{-26}$, and $\Omega$ the indecomposable symmetric domain such that $\mathrm{Aut}(\Omega)^{\circ}$ is isomorphic to $\mathrm{Isom}(X)^{\circ}$. If $\Gamma$ is a cocompact lattice in $\mathrm{Isom}(X)$, then there exists a torsion-free subgroup $\Gamma'<\Gamma$ of finite index such that $M=X/\Gamma'$ is diffeomorphic to the convex projective manifold  $\Omega/\phi\Gamma'\phi^{-1}$.
\end{prop}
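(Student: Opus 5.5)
Since $\Gamma$ is a cocompact lattice in $\mathrm{Isom}(X)$, and $\mathrm{Isom}(X)$ has finitely many connected components, the group $\Gamma_0=\Gamma\cap\mathrm{Isom}(X)^{\circ}$ has finite index in $\Gamma$. It is still a cocompact lattice, now in the connected group $G=\mathrm{Isom}(X)^{\circ}$.

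The first step is to obtain torsion-freeness. The group $\Gamma_0$ is finitely generated, being a cocompact lattice acting properly and cocompactly on $X$. It is also linear: the identification $G\cong\mathrm{Aut}(\Omega)^{\circ}<\PGL_{n+1}(\matR)$ realises it as a subgroup of $\PGL_{n+1}(\matR)$, which is linear via the adjoint representation. By Selberg's lemma there is therefore a torsion-free subgroup $\Gamma'<\Gamma_0$ of finite index, hence of finite index in $\Gamma$. Since $\Gamma'$ acts properly discontinuously and freely by isometries, $M=X/\Gamma'$ is a closed manifold. It carries a complete $(G,X)$-structure with holonomy $\Gamma'$.

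The second step is to produce the model. I will use the convex projective model $\phi\colon X\to\Omega$ provided by Koecher--Vinberg's Theorem~\ref{teo:indecomposable-symmetric} and the discussion preceding the statement. Concretely, the homogeneous space $\Omega\cong\mathrm{Aut}(\Omega)^{\circ}/K$ is a symmetric space of non-compact type with isometry group identity component $\mathrm{Aut}(\Omega)^{\circ}\cong G$; here $K$ is the stabiliser of a point, which is a maximal compact subgroup. Since all maximal compact subgroups of $G$ are conjugate, there is a $G$-equivariant diffeomorphism $\phi\colon X=G/K\to\Omega$, equivariant with respect to the isomorphism $G\cong\mathrm{Aut}(\Omega)^{\circ}$. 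So $\phi g\phi^{-1}\in\mathrm{Aut}(\Omega)$ for every $g\in G$.

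The last step is to pass to the quotient. The group $\phi\Gamma'\phi^{-1}<\mathrm{Aut}(\Omega)$ is discrete and torsion-free, and it acts freely and cocompactly on $\Omega$ because $\phi$ conjugates the two actions. Hence $\Omega/\phi\Gamma'\phi^{-1}$ is a closed convex projective manifold, and $\phi$ descends to a diffeomorphism $X/\Gamma'\to\Omega/\phi\Gamma'\phi^{-1}$, as in the paragraph on convex projective models.

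The only point needing care is the equivariance of $\phi$: one must check that the abstract isomorphism $\mathrm{Isom}(X)^{\circ}\cong\mathrm{Aut}(\Omega)^{\circ}$ is compatible with the actions. I would settle this through the uniqueness of the symmetric space attached to a connected semisimple group with finite centre, namely $G/K$ with $K$ maximal compact. In the case $X=\matH^n$ there is nothing to check, since $\Omega$ is the Klein model and $\phi$ is the standard identification.
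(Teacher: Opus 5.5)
Your proof is correct and follows the paper's argument. You pass to $\Gamma\cap\mathrm{Isom}(X)^{\circ}$, which has finite index because $\mathrm{Isom}(X)$ has finitely many components, then apply Selberg's lemma, then descend the equivariant model $\phi$ to the quotient. Your use of the conjugacy of maximal compact subgroups to obtain the $G$-equivariance of $\phi$ fills in a point that the paper simply asserts.
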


Furthermore, one can easily build convex projective models for products of the form $$(G_1 \times G_2 \times \matR, X_1 \times X_2 \times \matR)$$ by considering decomposable domains of the form $\Omega_1 \ast \Omega_2$, where $\Omega_1$ and $\Omega_2$ are convex projective models for $(G_1,X_1)$ and $(G_2,X_2)$, respectively.

\section{NPC locally symmetric spaces} \label{sec:spazi-simmetrici}

In this section we prove Theorem \ref{teo:NPC-locally-symmetric-intro}. We address the following question: {\itshape Can a compact locally symmetric space be diffeomorphic/homeomorphic/homotopically equivalent to a compact convex projective manifold?} A partial, negative, answer is already given for the case of real rank one by Proposition \ref{prop:rank-1}, which will be applied in this section. We will make an essential use of superrigidity: see Section \ref{sec:rigidity} for this and related results, including the proof of a technical lemma that will be applied here. 

We make a preliminary observation: {by asphericity, if a closed locally symmetric space $X/\Lambda$ and a closed convex projective manifold $M$ are homotopically equivalent, then (they have the same dimension and) $X$ cannot have compact factors in its De Rham decomposition. If $n\neq4$, by the topological rigidity $X/\Lambda$ and $M$ are homeomorphic (see Section \ref{sec:prel-0}).}

We will split our analysis into two cases. The first is the case of locally symmetric spaces of non-compact type. Then we will deal with the more general case of non-positively curved locally symmetric spaces.

\subsection{Non-compact type}\label{sec:non-compact-type}

We begin by considering the case where the symmetric space $X$ is \emph{of non-compact type}, there is no compact or Euclidean factor in its De Rham decomposition. We prove:

\begin{prop}\label{prop:localmente-simmetrico} 
Let $X$ be a symmetric space of non-compact type and $\Gamma$ a cocompact lattice in $G =\mathrm{Isom}(X)^{\circ}$. If $\Gamma$ divides a properly convex domain $\Omega \subset \matRP^{n}$, then $\Omega$ is indecomposable, $\Gamma$ is irreducible, and: 
\begin{enumerate}
    \item \label{it:non-cpt-type:hyp} if $\Omega$ is non-symmetric, then $G=\mathrm{SO}^+_{n,1}(\mathbb{R})$;
    \item \label{it:non-cpt-type:non-hyp}  if $\Omega$ is symmetric (and therefore projectively equivalent to one of the domains listed in Theorem \ref{teo:indecomposable-symmetric}), then $G\cong \mathrm{Aut}(\Omega)^{\circ}$.
    \end{enumerate}
\end{prop}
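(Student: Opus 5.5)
The plan is to prove the two structural claims first, since they use only the results of Section~\ref{sec:prel}, and then split according to whether $\Omega$ is symmetric. Throughout I may pass to finite-index subgroups of $\Gamma$. A finite-index subgroup of $\Gamma$ is still a cocompact lattice in $G$ and still divides $\Omega$, so by Selberg's lemma I assume $\Gamma$ torsion free and its lift to $\GL_{n+1}(\matR)$ Zariski connected. Since $\Gamma$ acts freely and cocompactly on both $X$ and $\Omega$, both quotients are closed aspherical manifolds with fundamental group $\Gamma$. Hence
$$\dim X=\mathrm{cd}(\Gamma)=n.$$

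\emph{Indecomposability and irreducibility.} Since $X$ has non-compact type, $G$ is a centreless semisimple Lie group without compact factors. By Borel's density theorem every finite-index subgroup of $\Gamma$ is Zariski dense in $G$, so its centre lies in $Z(G)=\{1\}$. Thus $\Gamma$ does not virtually have infinite centre, and Corollary~\ref{cor:not-product} shows that $\Omega$ is indecomposable. If $\Gamma$ were reducible, some finite-index subgroup would be a product $\Gamma_1\times\Gamma_2$ of non-trivial groups. That subgroup still divides $\Omega$, and Corollary~\ref{cor:reducible-implies-decomposable} would make $\Omega$ decomposable, a contradiction.

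\emph{Case \eqref{it:non-cpt-type:hyp}: $\Omega$ non-symmetric.} By Theorem~\ref{teo:Benoist} and Remark~\ref{oss:non-simmetrico-discreto}, the inclusion $\phi\colon\Gamma\to\PGL_{n+1}(\matR)$ has discrete, infinite, Zariski-dense image.

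First suppose $\mathrm{rank}_\matR G\ge 2$. Since $\Gamma$ is irreducible, Margulis superrigidity (in the form of Theorem~\ref{teo:superrigidity}) applies, possibly after passing to a finite-index subgroup. It extends $\phi$ to a continuous homomorphism $\Phi\colon G\to\PGL_{n+1}(\matR)$. The image $\Phi(G)$ is a connected semisimple, hence algebraic, subgroup containing a Zariski-dense set, so $\Phi(G)=\PSL_{n+1}(\matR)$. The kernel of $\Phi$ consists of simple factors of $G$. If the kernel were non-trivial, the irreducible lattice $\Gamma$ would project densely to the complementary factors, contradicting the discreteness of $\phi(\Gamma)$. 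So $\Phi$ is an isogeny, and $\phi(\Gamma)$ is a cocompact lattice in $\PSL_{n+1}(\matR)$. This gives
$$\mathrm{cd}(\Gamma)=\dim \PSL_{n+1}(\matR)/\mathrm{PSO}_{n+1}=\tfrac{n(n+3)}{2}>n,$$
contradicting $\mathrm{cd}(\Gamma)=n$. (Alternatively, a lattice of $\PSL_{n+1}(\matR)$ cannot preserve a properly convex domain.)

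Now suppose $\mathrm{rank}_\matR G=1$. Then $X$ is $\matK$-hyperbolic, Proposition~\ref{prop:rank-1} forces $\matK=\matR$, and $\dim X=n$ gives $G=\SO^+_{n,1}(\matR)$.

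\emph{Case \eqref{it:non-cpt-type:non-hyp}: $\Omega$ symmetric.} By Theorem~\ref{teo:indecomposable-symmetric}, $G':=\Aut(\Omega)^\circ$ is one of the simple groups listed there. After passing to a finite-index subgroup, $\Gamma$ is a discrete cocompact subgroup of $G'$, i.e.\ a cocompact lattice; this follows since $G'$ acts transitively and properly on $\Omega$ and $\Gamma$ divides $\Omega$. So $\Gamma$ is abstractly an irreducible cocompact lattice in both $G$ and $G'$.

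By Mostow rigidity, or by Margulis/Corlette superrigidity applied to $\Gamma\to G'$ with Zariski-dense unbounded image (as in the previous case), the abstract isomorphism extends to an isomorphism $G\cong G'$. The only case excluded from Mostow's theorem is when one of the groups is $\PSL_2(\matR)$. That happens only for $\Omega=\matH^2$, $n=2$. Then $\Gamma$ is a closed surface group, $\dim X=2$, and $G\cong\PSL_2(\matR)\cong\SO^+_{2,1}(\matR)$ directly.

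I expect the main obstacle to be the bookkeeping in the superrigidity step. One must check that the hypotheses really hold (irreducibility, Zariski density in the correct algebraic group after passing to finite index, and lifting from $\PGL$ to $\PSL$). One must also verify that the extension $\Phi$ cannot kill a factor of $G$ while keeping $\phi(\Gamma)$ discrete.
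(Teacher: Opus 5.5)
Your proof is correct and follows the paper's argument step by step: Borel density plus Corollary~\ref{cor:not-product} for indecomposability, Corollary~\ref{cor:reducible-implies-decomposable} for irreducibility, Benoist density with Margulis superrigidity and the dimension count $n(n+3)/2\neq n$ in the non-symmetric higher-rank case, Proposition~\ref{prop:rank-1} in rank one, and Mostow-type rigidity in the symmetric case (where the paper simply cites Corollary~\ref{cor:weak-Mostow}). The only differences are minor: you exclude the quaternionic and octonionic cases via Proposition~\ref{prop:rank-1} rather than Corlette's superrigidity, which Remark~\ref{rem:no-corlette-2} explicitly allows, and you derive the equality $\dim X = n$ from cohomological dimension, whereas the paper uses it tacitly.
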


In the statement above and in the rest of the paper, we say that an arbitrary group $\Gamma$ \emph{divides} a properly convex domain $\Omega$ if $\Gamma$ is isomorphic to a group (of projective transformations) which divides $\Omega$. 

It is well known that the situation described in case \eqref{it:non-cpt-type:hyp}.

\begin{rem} 
    In case \eqref{it:non-cpt-type:non-hyp} of Proposition \ref{prop:localmente-simmetrico}, $\Omega$ is a convex projective model for the pair $(G,X)$, so that the locally symmetric space $N=X/\Gamma$ is diffeomorphic to the convex projective manifold $M = \Omega/\Gamma$. In case \eqref{it:non-cpt-type:hyp}, despite $\Omega$ and $X$ being diffeomorphic, {if $n > 3$ we do not know if there is a} $\Gamma$-equivariant diffeomorphism between $X$ and $\Omega$, and so we cannot prove that $N$ is diffeomorphic to $M$. On the other hand, if $n \neq 4$, by topological rigidity (recall Section \ref{sec:prel-0}) $M$ and $N$ are homeomorphic.
\end{rem}

\begin{rem}\label{rem:proj-rigidity-non-compact-type}
    If $\Gamma<G$ is a lattice as in Proposition \ref{prop:localmente-simmetrico} and $G$ has real rank $\geq 2$, then $G$ is not isomorphic to $\mathrm{SO}^+_{n,1}(\matR)$, we are in case \eqref{it:non-cpt-type:non-hyp}, and $\Gamma$ is \emph{projectively rigid}: all representations $\Gamma \rightarrow \mathrm{PGL}_{n+1}(\matR)$ of $\Gamma$ as a group that divides a properly convex domain in $\matRP^n$ are conjugate in $\mathrm{PGL}_{n+1}(\matR)$. This holds true because up to conjugation there is a unique realisation of $\Gamma$ as a lattice in $G$ by the Superrigidity Theorem \ref{teo:superrigidity} and a unique realisation of $G$ as $\mathrm{Aut}(\Omega)^{\circ}$ for some symmetric properly convex domain $\Omega\subset \matRP^{n}$ up to projective equivalence by Theorem \ref{teo:indecomposable-symmetric}.
\end{rem}

We will give separate proofs for the various statements of Proposition \ref{prop:localmente-simmetrico}. We begin by recalling some useful facts. 

A connected Lie group $G$ is \emph{algebraic} if it is isomorphic to the identity component $\mathbf{G}(\mathbb{R})^{\circ}$ of the group of real points of a linear algebraic $\mathbb{R}$-group $\mathbf{G}$. We will make use of the following well-known facts about algebraic Lie groups \cite[\S 6, p.328]{M}:
\begin{enumerate}
    \item the centre of a semisimple algebraic Lie group $G$ is finite;
    \item a connected, semisimple, real Lie group with trivial centre is algebraic.
\end{enumerate}
Notice that if $X$ is a symmetric space of non-compact type, the Lie group $G=\mathrm{Isom}(X)^{\circ}$ is semisimple and has trivial centre \cite[p.334]{M}, and therefore it is algebraic.

We begin by proving that, under the hypotheses of Proposition \ref{prop:localmente-simmetrico}, the domain $\Omega$ is indecomposable. 

\begin{lemma}\label{lem:indecomposable-domain} 
    Let $G$ be a connected semisimple algebraic Lie group of non-compact type, and $\Gamma<G$ a lattice. If $\Gamma$ divides a properly convex domain $\Omega \subset \matRP^{n}$, then $\Omega$ is indecomposable.
\end{lemma}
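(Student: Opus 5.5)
By Corollary \ref{cor:not-product}, it suffices to show that no finite-index subgroup of $\Gamma$ has infinite centre. I will prove the following claim: if $\Gamma'<\Gamma$ has finite index, then the centre $Z(\Gamma')$ is finite.

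First, $\Gamma'$ is itself a lattice in $G$. By the Borel density theorem, since $G$ is connected, semisimple, algebraic and without compact factors, $\Gamma'$ is Zariski dense in $G$. Let $z\in Z(\Gamma')$. The centraliser of $z$ in $G$ is Zariski closed and contains $\Gamma'$, so it contains the Zariski closure of $\Gamma'$, which is $G$. Hence $z$ lies in the centre $Z(G)$. This centre is finite by the recalled fact (1) about semisimple algebraic Lie groups. Therefore $Z(\Gamma')\subset Z(G)\cap\Gamma'$ is finite, which proves the claim.

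Now suppose that $\Omega$ were decomposable. Corollary \ref{cor:not-product} would then give a finite-index subgroup of $\Gamma$ with infinite centre, contradicting the claim. Hence $\Omega$ is indecomposable.

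The only step needing care is the use of Borel density, which requires that $G$ have no compact factors. This is exactly the non-compact type hypothesis. If one prefers to avoid Borel density in this generality, an alternative is to apply the same argument factorwise to the almost simple non-compact factors.
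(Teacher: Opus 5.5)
Your proof is correct and is essentially the paper's. The paper deduces the lemma from Corollary \ref{cor:not-product} together with Lemma \ref{lemma:finitecenter}, whose proof is the same Borel density and centraliser argument placing the centre of a lattice inside the finite centre of $G$; your explicit passage to finite-index subgroups, which are again lattices, just spells out the ``virtually'' in Corollary \ref{cor:not-product} that the paper leaves implicit.
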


Lemma \ref{lem:indecomposable-domain} follows immediately from Corollary \ref{cor:not-product} and the following well-known fact:

\begin{lemma}\label{lemma:finitecenter}
    If $G$ is a connected semisimple algebraic Lie group of non-compact type and $\Gamma<G$  is a lattice, then $\Gamma$ has finite centre. If furthermore $G$ has trivial centre, so does $\Gamma$.
    \end{lemma}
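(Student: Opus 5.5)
The plan is to use the Borel density theorem. Since $G$ is a connected semisimple algebraic Lie group of non-compact type, i.e. with no compact factors, Borel's density theorem says that any lattice $\Gamma<G$ is Zariski dense in $G$. (More precisely, $\Gamma$ is Zariski dense in $\mathbf{G}(\matR)^\circ$ for the algebraic group $\mathbf{G}$, and we work inside $\mathbf{G}(\matR)$.)

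Let $z$ be in the centre $Z(\Gamma)$. The centraliser $C_G(z)=\{g\in G \mid gz=zg\}$ is Zariski closed, being the intersection of $G$ with an algebraic subset. It contains $\Gamma$, so it contains the Zariski closure of $\Gamma$, which is $G$. Hence $z$ commutes with all of $G$, and $z\in Z(G)$. This gives $Z(\Gamma)\subseteq Z(G)\cap\Gamma$.

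By fact (1) recalled above, the centre of a semisimple algebraic Lie group is finite, so $Z(\Gamma)$ is finite. If moreover $Z(G)$ is trivial, the inclusion forces $Z(\Gamma)$ to be trivial as well.

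The only delicate point is to apply Borel density correctly. We need that $G$ has no compact factors, which is exactly the non-compact type hypothesis. We also need to handle the identification of $G$ with an identity component of real points. If needed, we can use the Lie-theoretic form of Borel density instead: $\mathrm{Ad}(\Gamma)$ and $\mathrm{Ad}(G)$ have the same Zariski closure in $\GL(\mathfrak g)$. Then $\mathrm{Ad}(z)$ is central in $\mathrm{Ad}(G)$, so $z$ lies in the preimage of the centre of $\mathrm{Ad}(G)$. Since $\mathrm{Ad}(G)$ is centreless, that preimage is $Z(G)$, which is finite by fact (1).

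With this lemma, Lemma \ref{lem:indecomposable-domain} follows at once. Every finite-index subgroup of $\Gamma$ is again a lattice, so it also has finite centre. Therefore $\Gamma$ does not virtually have infinite centre, and Corollary \ref{cor:not-product} gives that $\Omega$ is indecomposable.
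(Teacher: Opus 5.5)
Your proof is correct and is essentially the paper's argument. Borel density makes $\Gamma$ Zariski dense in $G$, and the centraliser of a central element (the paper uses the whole centre) is Zariski closed and contains $\Gamma$, hence equals $G$; so $Z(\Gamma)\subseteq Z(G)$, which is finite, or trivial under the extra hypothesis. The $\mathrm{Ad}$-reformulation you sketch is also valid, since $\mathrm{Ad}(G)$ is centreless for connected semisimple $G$, but it is not needed.
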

\begin{proof}
Denote by $Z$ the centre of $\Gamma$ and let $C_{G}(Z)$ denote the centraliser of $Z$ in $G$. We have that $C_{G}(Z)$ is Zariski closed in $G$, since the property of commuting with an element $g \in G$ is expressible via polynomial equations, and clearly $\Gamma < C_{G}(Z)$. By Borel's Density Theorem \cite{Borel-density} $\Gamma$ is Zariski dense in $G$, and thus $C_{G}(Z)=G$. It follows that $Z$ is contained in the (finite) centre of $G$.

\end{proof}

We now prove the irreducibility of $\Gamma$ in the statement of Proposition \ref{prop:localmente-simmetrico}:

\begin{lemma}\label{lem:irriducibile}
    Under the hypotheses of Proposition \ref{prop:localmente-simmetrico}, the lattice $\Gamma$ is irreducible.
\end{lemma}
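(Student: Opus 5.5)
We argue by contradiction, using the characterisation of reducible lattices together with Corollary \ref{cor:reducible-implies-decomposable} and Lemma \ref{lem:indecomposable-domain}. Suppose $\Gamma$ is reducible. Since $X$ is of non-compact type, $G=\mathrm{Isom}(X)^{\circ}$ is semisimple with trivial centre, so $G = G_1\times\dots\times G_s$ is a product of simple centreless factors. Reducibility of the lattice $\Gamma$ means there is a non-trivial decomposition $G = G' \times G''$ (grouping factors) such that $\Gamma' = \Gamma\cap G'$ and $\Gamma''=\Gamma\cap G''$ are lattices in $G'$ and $G''$, respectively, and $\Gamma'\times\Gamma''$ has finite index in $\Gamma$. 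This is the standard equivalent formulation of reducibility.

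Next we check that $\Gamma'$ and $\Gamma''$ are non-trivial. Both are lattices in non-compact semisimple groups of positive dimension, hence infinite, since a trivial lattice would force $G'$ or $G''$ to have finite volume, i.e. to be compact, contradicting non-compact type.

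Now let $\Gamma_0=\Gamma'\times\Gamma''$, which has finite index in $\Gamma$. Because $\Gamma$ divides $\Omega$, so does $\Gamma_0$: it is still discrete in $\Aut(\Omega)$, and a finite-index subgroup of a cocompact group acts cocompactly. Applying Corollary \ref{cor:reducible-implies-decomposable} to $\Gamma_0\cong \Gamma'\times\Gamma''$, we get that $\Omega$ is decomposable. This contradicts Lemma \ref{lem:indecomposable-domain}, which gives indecomposability of $\Omega$ since $G$ is algebraic and $\Gamma$ is a lattice in it. Alternatively, we can contradict Corollary \ref{cor:not-product} directly via Lemma \ref{lemma:finitecenter}.

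The only delicate point is the first step: justifying that reducibility yields a genuine finite-index product subgroup $\Gamma'\times\Gamma''$, rather than just a lattice commensurable with such a product. If one's definition of irreducibility is phrased via dense projections to factors, we would invoke the standard fact (e.g. Raghunathan, Cor. 5.21) that a lattice in a centreless semisimple group without compact factors is reducible if and only if it is commensurable with a product of lattices in complementary normal subgroups. We would then intersect with $\Gamma$ to obtain the finite-index product inside $\Gamma$ itself. The remaining steps are immediate.
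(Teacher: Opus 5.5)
Your proposal is correct and follows essentially the same route as the paper: pass to a finite-index subgroup of the form $\Gamma'\times\Gamma''$ with non-trivial factors, apply Corollary \ref{cor:reducible-implies-decomposable} to conclude that $\Omega$ is decomposable, and contradict Lemma \ref{lem:indecomposable-domain}. Your extra care about the factors being non-trivial, and about getting an actual finite-index product inside $\Gamma$ (for instance $(\Gamma\cap G')\times(\Gamma\cap G'')$), only spells out what the paper leaves implicit.
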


\begin{proof} 
    Assume, by contradiction, that $\Gamma$ is reducible. This means that $G=\mathrm{Isom}(X)^{\circ}$ decomposes as a direct product of groups $G_1 \times \dots \times G_{\ell}$ with $\ell\geq 2$ and, up to possibly passing to a finite-index subgroup, we can assume that $\Gamma=\Gamma_1 \times, \dots, \times \Gamma_{\ell}$, where each $\Gamma_i$ is an irreducible lattice in $G_i$. Then $\Omega$ is decomposable by Corollary \ref{cor:reducible-implies-decomposable}, but this contradicts Lemma \ref{lem:indecomposable-domain}
\end{proof}

We conclude the proof of Proposition \ref{prop:localmente-simmetrico}:

\begin{proof} [Proof of Proposition \ref{prop:localmente-simmetrico}]
    We first assume that $\Omega$ is not symmetric. Let us denote by $\tilde{\Gamma}$ the lift of $\Gamma$ to $\mathrm{GL}_{n+1}(\mathbb{R})$. Up to possibly passing to a finite-index subgroup, we may assume that $\tilde{\Gamma}$ is Zariski connected. Then it follows from Lemma \ref{lem:indecomposable-domain} and Benoist's density Theorem \ref{teo:Benoist} that $\tilde{\Gamma}$ is Zariski dense in $\mathrm{GL}_{n+1}(\mathbb{R})$ and $\Gamma$ is Zariski dense in $\mathrm{PGL}_{n+1}(\mathbb{R})$.

    We now reason case by case, using the fact that $\Gamma$ is irreducible by Lemma \ref{lem:irriducibile}. If the Lie group $G$ has rank $\geq 2$, by the Superrigidity Theorem \ref{teo:superrigidity}, we obtain a continuous morphism $\phi\colon G \rightarrow \mathrm{PSL}_{n+1}(\mathbb{R})$. Moreover, since $\phi(\Gamma)$ is discrete in $\mathrm{PSL}_{n+1}(\mathbb{R})$, we have that $\phi$ is an isomorphism. Therefore, $X$ is the symmetric space associated to the Lie group $\mathrm{PSL}_{n+1}(\mathbb{R})$ with $n\geq 2$. However, in this case $\mathrm{dim}(X)=n(n+3)/2 \neq n=\mathrm{dim}(\Omega)$, and we obtain a contradiction.

    If $G$ has real rank one and is isomorphic to $\mathrm{PSp}(n,1), n\geq 2$, or the exceptional adjoint group $F_4^{-20}$, by Theorem \ref{teo:superrigidity} we obtain an isomorphism between $G$ and $\mathrm{PSL}_{n+1}(\mathbb{R})$, which is again a contradiction since $\mathrm{PSp}(n,1)$ and $F_4^{-20}$ have absolute type $C_{n+1}$ and $F_4$ respectively, while $\mathrm{PSL}_{n+1}(\mathbb{R})$ has absolute type $A_n$.

    Only two cases remain to be treated, namely the real hyperbolic case (where $G=\mathrm{SO}^+_{n,1}(\mathbb{R})$ with $n \geq 2$) and the complex hyperbolic case (where $G=\mathrm{PSU}_{m,1}$ and $2m=n \geq 4$). These are precisely the Lie groups for which the superrigidity does not hold. However, by Proposition \ref{prop:rank-1} no lattice in $\mathrm{PSU}_{m,1}$ can divide a properly convex domain $\Omega \subset \matRP^{2m}$, so the latter case is excluded. Therefore $\Gamma$ is a lattice in $\mathrm{SO}^+_{n,1}(\mathbb{R})$.

    Finally, if $\Omega$ is symmetric we obtain that $\Omega$ is one of the domains listed in Theorem \ref{teo:indecomposable-symmetric} (and is not a point). The group $\Gamma$ is realised as a cocompact lattice in $G$ and $\mathrm{Aut}(\Omega)^{\circ}$, which are  both connected and have trivial centre and no compact factors. Then Corollary \ref{cor:weak-Mostow} implies that $G$ is isomorphic to $\mathrm{Aut}(\Omega)^{\circ}$.

    In all these cases the domain $\Omega$ is a convex projective model for the pair $(\mathrm{Isom}(X)^{\circ},X)$, and thus there exists a diffeomorphism from $X/\Gamma$ to $\Omega/\Gamma$.    
\end{proof} \label{sec:proof-teo-loc-simm}

\begin{rem} \label{rem:no-corlette-2}
    Recall that the rank-one cases $\mathrm{PSp}(n,1)$ and $F_4^{-20}$ can also be excluded like $\mathrm{PSU}_{m,1}$ by Proposition \ref{prop:rank-1}, without need of Corlette's superrigidity; see Remark \ref{rem:no-corlette}.
\end{rem}

\subsection{The non-positively curved case}\label{sec:NPC-locally-symmetric}
The general case of non-positively curved (NPC) locally symmetric spaces is more intricate. The main difference with the non-compact type case lies in the fact that if the universal cover of an NPC locally symmetric space $M$ contains a nontrivial Euclidean factor, then $\Lambda=\pi_1(M)$ has, at least virtually, infinite centre. As such there is no obstruction for $\Lambda$ to divide a decomposable domain, and the projection of $\Lambda$ to the semisimple part of $\mathrm{Isom}(Y)$ can be a reducible lattice.
In fact in Section \ref{sec:construction} we will construct examples of NPC locally symmetric spaces whose fundamental group divides a decomposable properly convex domain and with both reducible or irreducible projection to the semisimple part.

We begin with some preliminaries on NPC locally symmetric spaces.
Let $Y$ be an NPC symmetric space. We assume that its de Rham decomposition has a nontrivial Euclidean factor, i.e. $Y=X \times \matE^d$ where $d \geq 1$ and $X$ is a symmetric space of non-compact type. We then have the decomposition of the identity component of the isometry group of $Y$:

$$\mathrm{Isom}(Y)^{\circ} = \mathrm{Isom}(X)^{\circ}\times \mathrm{Isom}(\matE^d)^{\circ} = G \times (\matR^d \rtimes \mathrm{SO}_{d}(\matR)),$$
where $G = \mathrm{Isom}(X)^{\circ}$ is a connected, semisimple Lie group with trivial centre, and $\matR^d$ denotes the group of translations in the Euclidean factor. We denote by $p_1\colon \mathrm{Isom}(Y)^{\circ}\rightarrow G$ and $p_2\colon \mathrm{Isom}(Y)^{\circ}\rightarrow \mathrm{SO}_d(\matR)$ the natural projections.

We summarise some results of Eberlein as follows:
\begin{thm}[Eberlein \cite{EB1, EB2, EB3}] \label{thm:Eberlein} 
    Let $Y$ be an NPC symmetric space with non-trivial Euclidean factor, and $\Lambda<\mathrm{Isom}(Y)^{\circ}$ a torsion-free lattice.
    \begin{enumerate}
        \item \label{it:eberlein:1} The unique maximal abelian normal subgroup of $\Lambda$ is the group $C(\Lambda)=\Lambda \cap \matR^d$, and it is a lattice in $\matR^d$, so that $C(\Lambda) \cong \matZ^d$ and $\matE^d/C(\Lambda)=T^d$ is a $d$-torus.
        \item \label{it:eberlein:2} The group $\Gamma=p_1(\Lambda)$ is a torsion-free lattice in $G$.
        \item \label{it:eberlein:3} Up to possibly passing to a finite-index subgroup, we have that $p_2(\Lambda)=\{1\}$, so that $\Lambda<G \times \matR^d$ and the centre of $\Lambda$ is $C(\Lambda)$. Moreover, there is a homomorphism $\rho\colon \Gamma\rightarrow \matR^d/C(\Lambda)<\mathrm{Isom}(T^d)$ such that $\Lambda$ consists of pairs of the form $(\gamma,\eta)\in G\times \matR^d$ for $\eta \in \rho(\gamma)$. 
        \item \label{it:eberlein:4} Up to passing to a further finite-index subgroup, we may assume that $\rho\colon \Gamma \rightarrow \matR^d/C(\Lambda)$ is trivial on the preimage in $\Gamma$ of the torsion subgroup of $H_1(\Gamma;\mathbb{Z})=\Gamma/[\Gamma,\Gamma]$. When this is the case, the exact sequence $$1\rightarrow C(\Lambda)\rightarrow \Lambda\rightarrow \Gamma \rightarrow 1$$ splits, and $\Lambda$ is isomorphic to $\Gamma\times \matZ^d$.
    \end{enumerate}
\end{thm}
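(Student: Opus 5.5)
We will deduce the four items from the general structure theory of lattices in Lie groups whose amenable radical is a compact extension of a vector group, as in the Auslander--Wang theorems in Raghunathan's book \cite{Rag}. Write $H=\mathrm{Isom}(Y)^\circ = G\times(\matR^d\rtimes \SO_d(\matR))$. The maximal connected amenable normal subgroup of $H$ is $A=\matR^d\rtimes\SO_d(\matR)$, and $H/A\cong G$ is semisimple without compact factors.

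\textbf{Step 1: item \eqref{it:eberlein:2} and the lattice part of item \eqref{it:eberlein:1}.} By the theorem of Auslander and Wang \cite[Corollary 8.28]{Rag}, the intersection $\Lambda\cap A$ is a lattice in $A$ and $p_1(\Lambda)$ is a lattice in $G$. This is the step I expect to be the main obstacle: the real content is the discreteness of $p_1(\Lambda)$, and I would quote it rather than reprove it.

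Since $\Lambda\cap A$ is a torsion-free cocompact discrete group of Euclidean isometries, Bieberbach's theorem shows that its translation part $C=\Lambda\cap\matR^d$ is a lattice in $\matR^d$ of finite index in $\Lambda\cap A$. The torsion-freeness of $\Gamma=p_1(\Lambda)$ is achieved by passing to a finite-index subgroup, using Selberg's lemma in $G$ and taking the preimage in $\Lambda$.

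\textbf{Step 2: item \eqref{it:eberlein:3}.} Conjugation by $\lambda\in\Lambda$ preserves the lattice $C\subset\matR^d$. It acts on $\matR^d$ through the rotation $p_2(\lambda)$, so $p_2(\Lambda)$ lies in the finite group of rotations preserving $C$. Passing to the kernel of this action gives $p_2(\Lambda)=\{1\}$, so $\Lambda<G\times\matR^d$.

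An element $(\gamma,\eta)$ is central in $\Lambda$ only if $\gamma$ is central in $\Gamma$. By Lemma~\ref{lemma:finitecenter}, $\Gamma$ has trivial centre because $G$ does, hence $\gamma=1$. Therefore $Z(\Lambda)=C$.

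More generally, any abelian normal subgroup of $\Lambda$ projects to an abelian normal subgroup of $\Gamma$. Its Zariski closure is an abelian normal subgroup of $G$ by Borel density, hence trivial. This shows that $C$ is the unique maximal abelian normal subgroup, which completes item \eqref{it:eberlein:1}.

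The fibre of $p_1|_\Lambda$ over $\gamma$ is a coset $\eta+C$. Setting $\rho(\gamma)=\eta+C\in\matR^d/C$ gives a well-defined map, and it is a homomorphism because the $\matR^d$-coordinates add under multiplication in $G\times\matR^d$.

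\textbf{Step 3: item \eqref{it:eberlein:4}.} The group $\Lambda$ is the central extension $1\to C\to\Lambda\to\Gamma\to1$. Its class in $H^2(\Gamma;C)$ is $\delta\rho$, where $\delta$ is the connecting map of $0\to C\to\matR^d\to\matR^d/C\to0$. Hence the extension splits whenever $\rho$ lifts to a homomorphism $\Gamma\to\matR^d$.

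The homomorphism $\rho$ factors through $H_1(\Gamma;\matZ)\cong\matZ^b\oplus T$ with $T$ finite, and $\rho(T)$ is finite. Pass to a finite-index subgroup of $\Gamma$, and correspondingly of $\Lambda$, on which $\rho$ kills the preimage of the torsion, for instance the kernel of $\Gamma\to T\to\rho(T)$. On this subgroup $\rho$ factors through a free abelian group and so lifts to $\matR^d$. An explicit splitting is then $\gamma\mapsto(\gamma,\tilde\rho(\gamma))$, and since $C$ is central this gives $\Lambda\cong\Gamma\times\matZ^d$.
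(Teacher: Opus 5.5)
Your proof is correct in substance, but it takes a different route from the paper. The paper gives no argument for this statement and simply imports it from Eberlein. Eberlein's arguments are geometric: they rest on central (or abelian normal) elements of $\pi_1$ acting by Clifford translations and on the resulting de Rham splitting, and they apply to arbitrary nonpositively curved lattices, not only locally symmetric ones. You instead use that $\Isom(Y)^\circ=G\times(\matR^d\rtimes\SO_d(\matR))$ is an explicit Lie group and derive everything from standard lattice theory:
\begin{itemize}
\item Auslander--Wang plus Bieberbach for the lattice $\Lambda\cap\matR^d$ and the discreteness of $p_1(\Lambda)$;
\item finiteness of the group of rotations preserving a lattice, for $p_2$;
\item Borel density (via Lemma \ref{lemma:finitecenter}) for the centre and the abelian normal subgroups;
\item the connecting map of $0\to C\to\matR^d\to\matR^d/C\to 0$ for the splitting.
\end{itemize}
In the locally symmetric setting, which is all the paper needs, this is shorter and self-contained.

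It also shows that your finite-index passage in item (2) is genuinely necessary, not a weakness: $p_1(\Lambda)$ need not be torsion free. For instance, let $\Gamma'<\PSL_2(\matR)$ be a cocompact Fuchsian group of signature $(1;2,2)$. Let $\rho\colon\Gamma'\to\matR/\matZ$ send both elliptic generators to $\tfrac12+\matZ$ and the two hyperbolic generators to $0$. Then $\Lambda=\{(\gamma,\eta)\,|\,\eta\in\rho(\gamma)\}<\PSL_2(\matR)\times\matR$ is a torsion-free cocompact lattice with $p_1(\Lambda)=\Gamma'$, which has $2$-torsion.

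Two points should be tightened.

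First, Wang's theorem is a statement about the solvable radical. For $d\geq 3$ the radical is $\matR^d$, and the Levi factor $G\times\SO_d(\matR)$ has compact factors. So you must invoke the version that only excludes compact Levi factors acting trivially on the radical; here $\SO_d(\matR)$ acts faithfully on $\matR^d$. That version gives directly that $\Lambda\cap\matR^d$ is a lattice. Your formulation via the amenable radical $A$ is true in this case, but it is not literally what is cited.

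Second, you prove the maximality claim of item (1) only after replacing $\Lambda$ by the finite-index subgroup with $p_2(\Lambda)=\{1\}$, whereas item (1) is asserted for $\Lambda$ itself. The fix is short:
\begin{itemize}
\item For an abelian normal subgroup $N$ of $\Lambda$, Borel density still gives $p_1(N)=\{1\}$, so $N\subset\Lambda\cap A$.
\item If $(v,k)\in N$, commutators with the translations by $c\in C$ show that $N$ contains the translations by $(1-k)c$.
\item Commutativity of $N$ then gives $k(1-k)c=(1-k)c$, that is $(1-k)^2c=0$, for all $c\in C$.
\item Since $C$ spans $\matR^d$ and $k$ is orthogonal, this forces $k=1$, so $N\subset C$.
\end{itemize}
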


Essentially, Theorem \ref{thm:Eberlein} states that any closed NPC locally symmetric space $M=Y/\Lambda$ is virtually diffeomorphic to a product $N \times T^d$, where $N=X/\Gamma$ is a locally symmetric space of non-compact type and $T^d$ is a torus whose dimension $d$ is equal to the dimension of the Euclidean factor in its De Rham decomposition of $X$. 

However, it is by no means true, in general, that $M$ as above should be isometric to a product of $N$ with a flat torus: this will happen only when the homomorphism $\rho\colon \Lambda\rightarrow T^d$ is trivial. Moreover, if the image of the homomorphism $\rho$ is infinite, no finite cover of $M$ is isometric to a Riemannian product of $N$ with a flat torus.

Item \eqref{it:eberlein:3} of Theorem \ref{thm:Eberlein} implies that if $\Lambda<\mathrm{Isom}(Y)$ is a cocompact lattice, there exists a finite-index subgroup $\Lambda'<\Lambda$ such that $\Lambda'<\mathrm{Isom}(X)^{\circ}\times \mathbb{R}^d$. In particular, in order to determine if some finite cover of an NPC locally symmetric space of the form $M=Y/\Lambda$ admits a convex projective structure, we may assume that $\Lambda<\mathrm{Isom}(X)^{\circ}\times \mathbb{R}^d$.

The aim of this Section is to prove the following result (Theorem \ref{teo:NPC-locally-symmetric-intro}), which extends Proposition \ref{prop:localmente-simmetrico} to the case where a lattice acts on a symmetric space with a nontrivial Euclidean factor:
\begin{thm}\label{teo:NPC-locally-symmetric} Let {$X$} be a symmetric space of non-compact type, $Y=X \times \matE^d$ an NPC symmetric space, and $\Lambda<\mathrm{Isom}(X)^{\circ}\times \matR^d$ a cocompact lattice. Then $\Lambda$ divides a properly convex domain $\Omega \subset \matRP^{n}$ if and only if $X$ is of the form $$X=X_1 \times  \dots \times X_r,$$ where $r \leq d+1$ and each $X_i$ is the irreducible symmetric space associated to one of the simple Lie groups $\mathrm{SO}_{m,1} (\mathbb{R})$, $\mathrm{SL}_m(\matK)$ with $\matK \in \{ \matR, \matC, \matH \}$, or $E_6^{-26}$.
\end{thm}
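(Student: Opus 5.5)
We prove the two implications separately. Throughout, by Theorem \ref{thm:Eberlein} we may pass to a finite-index subgroup and assume $\Lambda\cong \Gamma\times\matZ^d$. Here $\Gamma=p_1(\Lambda)$ is a torsion-free cocompact lattice in $G=\mathrm{Isom}(X)^\circ$, and $C(\Lambda)\cong\matZ^d$ is the centre of $\Lambda$. Passing to finite-index subgroups is harmless in both directions: divisibility passes to finite-index subgroups, and the lattice conditions are commensurability invariant.

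\emph{Necessity.} Suppose $\Lambda$ divides $\Omega=\Omega_1\ast\dots\ast\Omega_k$, written as its indecomposable decomposition. Take the lift $\tilde\Lambda<\GL_{n+1}(\matR)$ of Section \ref{sec:lifts}, made Zariski connected after passing to a finite-index subgroup. By Theorem \ref{teo:Vey}, the centre of $\tilde\Lambda$ is a lattice in $H\cong\matR^k$. The centre of $\tilde\Lambda$ is the preimage of $C(\Lambda)$ extended by the homotheties $\lambda^{\matZ}\cdot\id$, so it has rank $d+1$. Hence $k=d+1$.

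Set $\Lambda_i$ to be the image of $\tilde\Lambda$ in $\Aut(\Omega_i)$, which divides $\Omega_i$ by Theorem \ref{teo:Vey}\eqref{it:vey:3}. If $\Omega_i$ is a point, $\Lambda_i$ is trivial. Otherwise, since $\matZ^d$ is central and acts on $V_i$ by scalars, $\Lambda_i$ is a quotient of $\Gamma$. The kernel of $\Gamma\to\Lambda_i$ is a normal subgroup $N_i$ of the lattice $\Gamma$.

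The next step uses the Margulis normal subgroup theorem together with the decomposition of $G$ into simple factors. Write $\Gamma$, up to commensurability, as a product $\Gamma^{(1)}\times\dots\times\Gamma^{(s)}$ of irreducible lattices in groups $G^{(1)},\dots,G^{(s)}$. Each non-trivial quotient $\Lambda_i$ is a group dividing an indecomposable domain, so its centre is finite. It is then not a product of infinite groups, by Corollary \ref{cor:reducible-implies-decomposable}, and it is infinite because $\Omega_i$ is not a point. This should force $\Lambda_i$ to be commensurable to a single irreducible factor $\Gamma^{(j)}$, with the remaining factors lying in the kernel.

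I expect this to be the main obstacle. For irreducible factors of rank $\geq 2$, Margulis' normal subgroup theorem shows that normal subgroups are finite or of finite index. In rank one, however, there are infinite quotients, and these must be controlled differently. The plan is to use cocompactness of $\tilde\Lambda$ on the cone: the product map $\Lambda\to\prod_i\Lambda_i\times\matZ^{d+1}$ has discrete image and is cocompact on $\prod C_i$. Comparing cohomological dimensions, so that $\dim X=\sum_i\dim\Omega_i$, should then show that each $\Gamma^{(j)}$ injects, virtually, into exactly one $\Lambda_i$ and as a finite-index subgroup of it.

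Once this is established, each $\Gamma^{(j)}$ divides an indecomposable $\Omega_i$. Proposition \ref{prop:localmente-simmetrico} then gives that $G^{(j)}$ is $\SO^+_{m,1}(\matR)$, or is isomorphic to $\Aut(\Omega_i)^\circ$ from the list in Theorem \ref{teo:indecomposable-symmetric}. In particular $G^{(j)}$ is simple, since Lemma \ref{lem:irriducibile} forbids reducibility. Therefore $X=X_1\times\dots\times X_r$, where $r$ is at most the number of non-point factors, and so $r\le k=d+1$.

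\emph{Sufficiency.} Given $X=X_1\times\dots\times X_r$ with $r\le d+1$, replace $\Lambda$ by a finite-index subgroup $\Gamma\times\matZ^d$ with $\Gamma=\Gamma_1\times\dots\times\Gamma_r$ a product of irreducible lattices. I expect this reduction to be possible up to commensurability after Theorem \ref{thm:Eberlein}\eqref{it:eberlein:4}; if it fails, one would instead divide a product of symmetric cones directly, using an irreducible $\Gamma$. By Proposition \ref{prop:projective-model}, each $\Gamma_i$ divides the symmetric model $\Omega_i$ of $X_i$. Add $d+1-r$ points $\Omega_j=\matP(\calL_1)$.

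Next lift each $\Gamma_i$ to $\SL^\pm$ as in Proposition \ref{prop:lift}, and let $\matZ^{d+1}$ act by a lattice of independent positive homotheties in $\matR_+^{d+1}$ that contains the diagonal $\lambda\cdot\id$. As in the join construction of Section \ref{sec:lifts}, the projectivisation divides $\Omega_1\ast\dots\ast\Omega_{d+1}$. Modulo the diagonal this group is $\Gamma\times\matZ^d\cong\Lambda$, so $\Lambda$ divides this properly convex domain.
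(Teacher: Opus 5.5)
Your proposal has genuine gaps in both directions. The more serious one is in necessity.

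\textbf{Necessity.} Theorem \ref{teo:Vey}\eqref{it:vey:3} only says that each cone $C_i$ is divisible. It does not say that the image $\Lambda_i$ of $\tilde\Lambda$ in $\Aut(\Omega_i)$ divides $\Omega_i$, and in general it does not, because $\Lambda_i$ need not be discrete.

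For a concrete case, take $d=1$ and $\Lambda=\Gamma\times\matZ$, with $\Gamma$ an irreducible cocompact lattice in $\PSL_2(\matR)\times\PSL_2(\matR)$. Then $\Lambda$ divides $\matH^2\ast\matH^2$, a case the theorem allows since $r=2\le d+1$. Yet both projections $\Lambda_1,\Lambda_2$ are dense in $\PSL_2(\matR)$, and $\Gamma$ embeds in each of them.

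So your chain breaks: ``$\Lambda_i$ has finite centre, Corollary \ref{cor:reducible-implies-decomposable} applies to $\Lambda_i$, each $\Gamma^{(j)}$ injects into exactly one $\Lambda_i$ and hence divides $\Omega_i$''. Followed through, it would prove the false statement that every irreducible factor of $\Gamma$ is a lattice in a simple group.

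The rank-one case is also left as a heuristic. The dimension count $\dim X=\sum_i\dim\Omega_i$ alone does not stop a lattice in $\SO^+_{m,1}(\matR)$ or $\mathrm{PSU}_{m,1}$ from mapping with infinite kernel onto several $\Lambda_i$.

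The missing idea is the paper's Zariski-closure bookkeeping:
\begin{itemize}
\item By Theorem \ref{teo:Benoist}, the Zariski closure of the image of $\Gamma$ in $\prod_i\Aut(\Omega_i)$ is a product $H_1\times\dots\times H_{d+1}$ of simple factors, one per indecomposable factor.
\item The closures $Z_j$ of the images of distinct irreducible factors $\Gamma^{(j)}$ commute. Since the $H_i$ are non-abelian, the $Z_j$ are products of disjoint sets of $H_i$.
\item Outside the $\SO_{m,1}$ and $\mathrm{SU}_{m,1}$ cases, superrigidity gives $G^{(j)}\cong Z_j^\circ$. If $G^{(j)}$ is not simple, its projections are dense, so the relevant $\Aut(\Omega_i)$ are non-discrete and the $\Omega_i$ are symmetric by Remark \ref{oss:non-simmetrico-discreto}.
\item In rank one, a separate argument (free abelian subgroups of rank $\geq 2$, and Lemma \ref{lemma:reducibility}) shows that $Z_j$ is a single $H_i$. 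Then $\Gamma^{(j)}$ divides a single $\Omega_i$ and Proposition \ref{prop:localmente-simmetrico} applies.
\end{itemize}

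\textbf{Sufficiency.} Your claim that passing to finite index is harmless ``in both directions'' is false in this direction. Divisibility descends to finite-index subgroups, but the theorem asks that $\Lambda$ itself divide. A dividing representation of a split finite-index subgroup $\Gamma'\times\matZ^d$ need not extend to $\Lambda$, which may be a non-split, twisted extension. Your main line also assumes $\Gamma$ is virtually a product of lattices in the simple factors, which fails for irreducible lattices in non-simple groups; only your fallback survives.

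The paper avoids both problems by keeping Eberlein's twisting homomorphism $\rho\colon\Gamma\to\matR^d/C(\Lambda)$. It transports $\rho$ to $\matP(\matR^{d+1}_+)/L$ via a Lie group isomorphism $\matR^d\to\matP(\matR^{d+1}_+)$ sending $C(\Lambda)$ to $L$. Equivalently: map $G\times\matR^d$ into the automorphism group of the join of the symmetric models with $d+1-r$ points, using the models on $G$ and this isomorphism on $\matR^d$. The image acts properly and transitively on the join, so the image of $\Lambda$ itself divides it.
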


We can be more precise as to how the domain $\Omega$ is built and how $\Lambda$ acts on it. {Let $$G = \Isom(X)^\circ = G_1 \times \dots \times G_r$$ be} the decomposition of $G=\Isom(X)^\circ$ into simple factors, set $\Gamma=p_1(\Lambda)$ the projection of $\Lambda$ to $G$, and denote by $\Gamma_i$ the projection of $\Gamma$ to $G_i$. 
There is a one-to-one correspondence between the simple factors of the decomposition of $G$ and the indecomposable factors $\Omega_1, \dots,\Omega_r$ of $\Omega$ of dimension $\geq 1$, such that the action of $\Lambda$ on $\Omega$ projects to an action by projective automorphisms of $\Gamma_i$ on $\Omega$.

If a factor $G_i$ is of the form $\mathrm{SL}_m(\matK)$ or $E_6^{-26}$, then $\Omega_i$ is the corresponding symmetric domain from the list in Theorem \ref{teo:indecomposable-symmetric}.
If $G_i$ is isomorphic to $\mathrm{SO}^+_{m,1}(\matR)$ and $\Gamma_i$ is dense in $G_i$, then $\Omega_i=\matP(\mathcal{L}_{m+1}) = \matH^m$ as in Theorem \ref{teo:indecomposable-symmetric}. If $G_i$ is isomorphic to $\mathrm{SO}^+_{m,1}(\matR)$ and $\Gamma_i$ is discrete in $G_i$, then $\Gamma_i$ is an irreducible factor of the lattice $\Gamma$ and, similarly to the case of Theorem \ref{teo:indecomposable-symmetric}, $\Omega$ is either $\matP(\mathcal{L}_{m+1}) = \matH^m$ or it is non-symmetric and divided by $\Gamma_i$. In total, $\Omega$ has $d+1$ indecomposable factors, and the centre $C(\Lambda)$ of $\Lambda$ is realised as a lattice in $\matP(\matR_+^{d+1})\cong \matR^d$. Similarly, $\Gamma$ is realised as a discrete subgroup of $\mathrm{Aut}(\Omega_1)^{\circ}\times \dots \times \mathrm{Aut}(\Omega_r)^{\circ}$ and its action on the product $\Omega_1 \times \dots \times \Omega_r$ of indecomposable factors of positive dimension is cocompact.

\subsection{Constructing convex projective structures}\label{sec:construction} 
We prove here the ``if'' part of Theorem \ref{teo:NPC-locally-symmetric}.

\begin{prop}\label{prop:constructing-projective-structures-NPC}
Any cocompact lattice $\Lambda<\mathrm{Isom}(X)^{\circ}\times \mathbb{R}^d$ that satisfies the hypotheses of Theorem  \ref{teo:NPC-locally-symmetric} divides some properly convex domain $\Omega\subset \mathbb{RP}^n$.
\end{prop}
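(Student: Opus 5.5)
We construct $\Omega$ as an explicit join and build the action of $\Lambda$ factor by factor. First we reduce, by Theorem \ref{thm:Eberlein} and up to finite index, to $\Lambda<G\times\matR^d$, where $\Gamma=p_1(\Lambda)$ is a torsion-free lattice in $G=G_1\times\dots\times G_r$, $C(\Lambda)=\Lambda\cap\matR^d\cong\matZ^d$ is a lattice, and $\Lambda=\{(\gamma,\eta)\,|\,\eta\in\rho(\gamma)\}$ for a homomorphism $\rho\colon\Gamma\to\matR^d/C(\Lambda)$. Passing to a finite-index subgroup is harmless: if a finite-index subgroup of $\Lambda$ divides $\Omega$, an induction-type argument is not obviously available, so I would instead read the statement of Theorem \ref{teo:NPC-locally-symmetric} as allowing this passage (as in Theorem \ref{teo:NPC-locally-symmetric-intro}). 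For each $i\le r$, let $\Omega_i=\matP(C_i)$ be the symmetric model of $X_i$ from Theorem \ref{teo:indecomposable-symmetric}, with $\Aut(\Omega_i)^\circ\cong G_i$. For $i=r+1,\dots,d+1$, take $C_i=\calL_1$, a half-line (a point in $\Omega$). Set $C=C_1\oplus\dots\oplus C_{d+1}$ and $\Omega=\matP(C)$. The dimension count then works: $\dim\Omega=\sum_i(\dim\Omega_i+1)-1=\dim X+d$.

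Next we define the representation. Each $G_i$ lifts to $\SL(V_i)\cap\Aut(C_i)$; this is the identity component of the determinant-one part, and we use Proposition \ref{prop:lift} for the projective version. This gives an injective homomorphism $\Gamma\to\prod_i\Aut_{\SL^\pm}(C_i)$, trivial on the half-line factors. The group of independent homotheties $\matR_+^{d+1}$ modulo scalars is $\matP(\matR_+^{d+1})\cong\matR^d$. Fix a linear isomorphism $\psi\colon\matR^d\to\matP(\matR_+^{d+1})$; it can be written through $\log$ coordinates as any isomorphism onto the hyperplane complement of the diagonal. Define
$$\Phi(\gamma,\eta)=\iota(\gamma)\cdot\psi(\eta)\in\Aut(\Omega).$$
Since $\matR^d$ is central and commutes with the block-diagonal image of $G$, $\Phi$ is a homomorphism on $G\times\matR^d$. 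It is injective on $\Lambda$, because $\iota$ is injective on $G$ (the centre is trivial) and $\psi$ is injective.

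It remains to check discreteness and cocompactness. The map $\Phi$ restricted to $G\times\matR^d$ is an isomorphism onto the finite-index subgroup $\prod_i\Aut(\Omega_i)^\circ\times\matP(\matR_+^{d+1})$ of $\Aut(\Omega)$, given by \eqref{eq:automorphism-divisible}, at least up to the finite centres. The identification of $\Aut(\Omega_i)^\circ$ with $G_i$ is exact, since both are centreless. Hence $\Phi(\Lambda)$ is a discrete cocompact subgroup of this closed subgroup, since $\Lambda$ is a cocompact lattice in $G\times\matR^d$. This subgroup acts properly and transitively on $\Omega$: $\Omega$ is homogeneous, being the join of homogeneous domains together with the homotheties. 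Therefore $\Phi(\Lambda)$ acts on $\Omega$ properly discontinuously and cocompactly, and it divides $\Omega$.

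The main obstacle is making sure that the full group $\prod\Aut(\Omega_i)^\circ\times\matP(\matR_+^{d+1})$ acts on $\Omega$ with compact stabilisers, so that cocompactness transfers from the group to $\Omega$. For this, use that $\Omega\cong\prod_i\Omega_i\times\matR^{d}$ equivariantly, via the join coordinates together with the ratios of the cone norms. Under this identification the action is the product of the isometric actions on the symmetric spaces and the translation action. So $\Omega$ is $(G\times\matR^d)$-equivariantly diffeomorphic to $Y$, which also yields the model for Theorem \ref{teo:NPC-locally-symmetric-intro}. The condition $r\le d+1$ is exactly what guarantees that there are enough cone factors for $\psi$ to exist.
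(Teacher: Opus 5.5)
Your argument is correct, but it is organised differently from the paper's.

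\textbf{The paper's route.} The paper works at the level of the lattice. It uses Theorem~\ref{thm:Eberlein} to split $\Lambda$ into $\Gamma=p_1(\Lambda)$ and a homomorphism $\rho\colon\Gamma\to\matR^d/C(\Lambda)$. It then realises $\Gamma$ factor by factor in $\prod_j\Aut(\Omega_j)$, places $C(\Lambda)$ as a lattice $L$ in $\matP(\matR^{d+1}_+)$, and reassembles $\Lambda$ as the pairs $(\gamma,\eta)$ with $\eta\in\rho(\gamma)$. Discreteness and cocompactness are left implicit.

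\textbf{Your route.} You work at the level of the ambient group. Taking every positive-dimensional factor symmetric, you build one continuous isomorphism $\Phi$ from $G\times\matR^d$ onto the closed subgroup $\prod_j\Aut(\Omega_j)^\circ\times\matP(\matR^{d+1}_+)$, which is the identity component of $\Aut(\Omega)$. This group acts transitively on $\Omega$, and properly, being a closed group of Hilbert isometries. Any cocompact lattice of $G\times\matR^d$ therefore transfers to a group dividing $\Omega$. Its image is exactly the paper's group of pairs, with $L=\psi(C(\Lambda))$. In effect you have turned Remark~\ref{rem:NPC-projective-structure} into the proof.

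\textbf{What each buys.} Your version verifies discreteness and cocompactness honestly and gives the equivariant diffeomorphism $Y\cong\Omega$. It needs neither Eberlein's theorem, nor torsion-freeness, nor any passage to finite index. The paper's factorwise scheme is more flexible. It also allows real hyperbolic irreducible factors $\Gamma_i$ to act on non-symmetric domains, where no transitive Lie group exists. It is also the form matched by the converse, Theorem~\ref{teo:ostruzioni-NPC-localmente-simmetrici}.

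\textbf{Two small corrections.} First, your opening reduction via Theorem~\ref{thm:Eberlein} is superfluous, and the self-contradictory sentence about finite index should go. The hypothesis already gives $\Lambda<G\times\matR^d$, and $\Phi$ is defined on all of $G\times\matR^d$, so your argument proves the statement for $\Lambda$ itself. Second, the ``main obstacle'' about compact stabilisers is automatic, since $\Aut(\Omega)$ acts properly on $\Omega$. The explicit identification $\Omega\cong\prod_i\Omega_i\times\matR^d$ is needed only for the diffeomorphism $Y/\Lambda\cong\Omega/\Lambda$, not for divisibility.
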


The proof follows easily from a general construction that we now outline.
Let $X$ be a symmetric space of non-compact type with $G=\mathrm{Isom}(X)^{\circ}$, and let $$G=F_1 \times \dots \times F_r,$$ be the decomposition of $G$ into simple factors. We require that the group $G$ contains an irreducible lattice, and also that there exists a convex projective model for all pairs of the form $(F_j,X_j)$, where $X_j$ is a factor of the De Rham decomposition of $X$. It is a well-known corollary of the Superrigidity Theorem \cite[p.335]{M} that if there exists an irreducible lattice in $G$ then $G$ is \emph{isotypic}, meaning that all the simple factors of the complexification of the Lie algebra of $G$ are isomorphic. Combining this with the discussion in Section \ref{sec:projective-models} we assume that:

\begin{enumerate}
\item \label{it:ass:1} $F_1, \ldots, F_r$ are isogenous to $\mathrm{SO}_{m,1}(\mathbb{R})$, $\mathrm{SL}_m(\matK)$ with $\matK \in \{\matR, \matC, \matH \}$, or $E_6^{-26}$, 
\item \label{it:ass:2} $G$ is isotypic,
\end{enumerate}
and we say that the group $G$ is \emph{projectively admissible}. 

Given that $\mathrm{SO}_{2m,1}(\matR)$ has absolute type $B_m$, $\mathrm{SO}_{2m+1,1}(\matR)$ has absolute type $D_{m+1}$, $\mathrm{SL}_m(\matR)$ and $\mathrm{SL}_m(\matC)$ have absolute type $A_{m-1}$, $\mathrm{SL}_m(\matH)$ has absolute type $A_{2m-1}$ and $E_6^{-26}$ has absolute type $E_6$, we have the following possibilities:

\begin{itemize}
    \item Absolute type $A_1\cong B_1\cong D_1$: the factors $F_j$ are all isomorphic to $SO^+_{2,1}(\matR)\cong \mathrm{PSL}_2(\matR)$ or $SO^+_{3,1}(\matR)\cong \mathrm{PSL}_2(\matC) $.
    \item Absolute type $B_m$, $m\geq 2$: the factors $F_j$ are all isomorphic to $\mathrm{SO}^+_{2m,1}(\matR)$.
    \item Absolute type $D_{m+1}$, $m\geq 2$: the factors $F_j$ are all isomorphic to $\mathrm{SO}^+_{2m+1,1}(\matR)$.
    \item Absolute type $A_{2m-1}$, $m\geq 2$: the factors $F_j$ are all isomorphic to $\mathrm{PSL}_m(\matH)$ or $\mathrm{PSL}_{2m}(\matR)$ or $\mathrm{PSL}_{2m}(\matC)$.
    \item Absolute type $A_{2m}$, $m\geq 1$: the factors $F_j$ are all isomorphic to $\mathrm{PSL}_{2m+1}(\matR)$ or $\mathrm{PSL}_{2m+1}(\matC)$.
    \item Absolute type $E_6$: the factors $F_j$ are all isomorphic to $E_6^{-26}$.
\end{itemize}
In the case of absolute type $A_3\cong D_3$ in the list above, we have $\mathrm{PSL}_2(\matH) \cong \mathrm{SO}^+_{5,1}(\matR)$. 

Now, let us consider a finite collection $G_1,\dots, G_{\ell}$ of projectively admissible groups. Denoting by $S_i$ the symmetric space corresponding to $G_i$, we choose a cocompact irreducible lattice $\Gamma_i < G_i$ for every $i=1,\dots,\ell$, and consider the locally symmetric space
$$M=S_1/\Gamma_1 \times \dots \times S_{\ell}/\Gamma_{\ell}.$$
Among the irreducible factors of $\pi_1(M)=\Gamma=\Gamma_1 \times \dots \times \Gamma_{\ell}$ we look at those which correspond to a lattice in a real hyperbolic space. For each such factor $\Gamma_i$, we choose a properly convex domain $\Omega_i$ divided by $\Gamma_i$. By Proposition \ref{prop:localmente-simmetrico}, the domain $\Omega_i$ is indecomposable and is either bounded by an ellipsoid as in Theorem \ref{teo:indecomposable-symmetric} or it is not symmetric.

The remaining factors of $\Gamma$ correspond to an irreducible lattice in a semisimple Lie group of rank $\geq 2$ with associated symmetric space $S_i$. For each factor $X_{i_j}$, $j=1,\dots,q$, of the De Rham decomposition of $S_i$, we choose the (indecomposable) symmetric domain $\Omega_{i_{j}}$ which is a convex projective model for $X_{i_j}$ as in Theorem \ref{teo:indecomposable-symmetric}. Notice that $\Gamma_i$ is naturally realised as a discrete, cocompact, irreducible subgroup of $\mathrm{Aut}(\Omega_{i_1})\times\dots\times \mathrm{Aut}(\Omega_{i_q})$.

Let $\Delta^h \subset \matRP^h$ denote the interior of an $h$-simplex, that is the join of $h+1$ copies of a single point ${p}=\matP(\mathcal{L}_1)$ (seen as a properly convex domain in $\matRP^0$ as in Theorem \ref{teo:indecomposable-symmetric}). Now we form the join $\Omega=\Omega_1 \ast \dots \ast \Omega_r\ast \Delta^h \subset \matRP^n$ of all the chosen properly convex domains with $\Delta^h \subset \matRP^h$, so that
$$\mathrm{Aut}(\Omega_1)\times \dots \times \mathrm{Aut}(\Omega_r) \times \matP(\matR_+^{r+h+1})< \mathrm{Aut}(\Omega)$$
and $\Gamma$ is naturally realised as a discrete, cocompact subgroup of $\mathrm{Aut}(\Omega_1)\times \dots \times \mathrm{Aut}(\Omega_r)$. Notice how $\Omega$ decomposes as a direct product of $r+h+1=d+1$ indecomposable domains: the $\Omega_i$ for $i=1,\dots,r$ and $h+1$ points $\{p_j\}=\Omega_{r+j}$ for $j=1,\dots,h+1$ corresponding to the vertices of the $h$-simplex. Here we allow the case where $h=-1$, meaning that $\Omega=\Omega_1 \ast \dots \ast \Omega_r$ and $d+1=r$.

Finally, we construct a discrete group $\Lambda<\mathrm{Aut}(\Omega)$ that divides $\Omega$ as follows:
\begin{enumerate}
    \item Denote by $\matR_+^{d+1}$ the group of independent positive homotheties the cones above the indecomposable factors $\Omega_i$, and by $\matP(\matR_+^{d+1})$ its projectivisation. Choose a lattice $L\cong \matZ^d<\matP(\matR_+^{d+1})$.
    \item Choose a homomorphism $\rho\colon \Gamma \rightarrow \matP(\matR_+^{d+1})/L$ and take $\Lambda$ to consists of pairs of the form $(\gamma,\eta)\in \prod_{j=1}^{r}\mathrm{Aut}(\Omega_j)\times \matP(\matR^{d+1}_+)$ for $\eta \in \rho(\gamma)$.
\end{enumerate}

\begin{proof} [Proof of Proposition \ref{prop:constructing-projective-structures-NPC}]
    Let $\Lambda<\mathrm{Isom}(X)^{\circ} \times \mathbb{R}^d$ satisfy the hypotheses of Theorem \ref{teo:NPC-locally-symmetric}. The projection $\Gamma=p_1(\Lambda)$ decomposes as a product $\Gamma_1\times \dots \times \Gamma_{\ell}$ of irreducible factors, where each factor $\Gamma_i$ is a lattice in a projectively admissible group. We carry over the construction outlined above as follows:
    \begin{enumerate}
        \item We realise $\Gamma$ as a discrete cocompact subgroup of $\mathrm{Aut}(\Omega_1) \times \dots \times \mathrm{Aut}(\Omega_r)$, where $\Omega_1, \ldots, \Omega_r$ are indecomposable domains of positive dimension in a one-to-one correspondence with the irreducible factors of the De Rham decomposition of $X$. 
        \item We set $\Omega=\Omega_1 \ast\dots\ast \Omega_r \ast \Delta^h$ with $h = d - r$, and choose a lattice $L\cong \mathbb{Z}^d<\mathbb{P}(\mathbb{R}_+^{d+1})$.
        \item We choose an isomorphism between the Lie groups $\mathbb{R}^d/C(\Lambda)$ and $\mathbb{P}(\mathbb{R}^{d+1}_+)/L$ and use it to push the morphism $\Gamma \rightarrow \mathbb{R}^d/C(\Lambda)$ given in Theorem \ref{thm:Eberlein} to a morphism $\rho\colon\Gamma\rightarrow\mathbb{P}(\mathbb{R}^{d+1}_+)/L$.
    \end{enumerate}
We thus obtain a properly convex domain $\Omega$ divided by $\Lambda$. This completes the proof.
\end{proof}

\begin{rem}\label{rem:NPC-projective-structure}
    Proposition \ref{prop:constructing-projective-structures-NPC} produced two closed manifolds, namely the locally symmetric space $N=Y/\Lambda$, where $\Lambda$ is a cocompact lattice in $\mathrm{Isom}(X)^{\circ} \times \matR^{d}$, and the properly convex manifold $\Omega/\Lambda$, having isomorphic fundamental group by construction. If in the construction above we choose all the indecomposable properly convex domains $\Omega_j$ to be symmetric, we obtain that $\Omega=\Omega_1 \ast \dots \ast \Omega_r\ast \Delta^h$ is a convex projective model for the pair $(\mathrm{Isom}(X)^{\circ} \times \matR^{d},Y)$ with $d=r+h$. As a consequence, in this case  $N$ is actually \emph{diffeomorphic} to $\Omega/\Lambda$ and therefore admits a convex projective structure. 
\end{rem}

\subsection{Obstructions for convex projective structures}
The discussion above proves one of the two implications of Theorem \ref{teo:NPC-locally-symmetric}. We now prove the opposite one, which amounts to showing that if $Y=X \times \matE^d$ is an NPC symmetric space and $\Lambda<\mathrm{Isom}(X)^{\circ} \times \matR^d$ is a cocompact lattice that divides a properly convex domain $\Omega \subset \matRP^n$, then $\Omega/\Lambda$ is virtually one of the examples constructed in Section \ref{sec:construction}.

Up to passing to a finite-index subgroup, we may assume that:
\begin{enumerate}
    \item \label{it:ass2:1} $\Lambda$ is isomorphic to $\Gamma \times \matZ^d$, where $\Gamma=p_1(\Lambda)$ is the projection of $\Lambda$ to $\mathrm{Isom}(X)^{\circ}$ and $\Gamma=\Lambda \cap \mathbb{R}^d$ (by Theorem \ref{thm:Eberlein});
    \item \label{it:ass2:2} $\Lambda$ is Zariski connected as a subgroup of $\mathrm{Aut}(\Omega)$, and therefore is realised as a subgroup of $$\mathrm{Aut}(\Omega_1)\times \dots \times \mathrm{Aut}(\Omega_{k+1})\times \matP(\matR_{+}^{k+1})<\mathrm{Aut}(\Omega),$$ where $\Omega_1,\dots,\Omega_{k+1}$ are the indecomposable domains whose join is $\Omega$ (by Theorem \ref{teo:Vey}). 
\end{enumerate}
We will prove the following result, which compares the two representations of $\Lambda$ as a lattice in $\mathrm{Isom}(Y)$ and in $\Aut(\Omega)$.

\begin{thm}\label{teo:ostruzioni-NPC-localmente-simmetrici}
    If $\Lambda \cong \Gamma \times \mathbb{Z}^d$ divides $\Omega$, then
    \begin{enumerate}
    \item \label{it:1} $k=d$, and the centre $\matZ^d$ of $\Lambda$ is realised as a lattice $L$ in the projective group $\matP(\matR^{d+1}_+)$ of independent positive homotheties in the cones above the indecomposable factors of $\Omega$;
    
    \item \label{it:2} the group $\Gamma\cong \Gamma \times \{0\}<\Lambda$ is isomorphic to its image under the projection map $$\pi\colon\prod_{j=1}^{d+1}\mathrm{Aut}(\Omega_j)\times \mathbb{P}(\mathbb{R}_+^{d+1})\rightarrow\prod_{j=1}^{d+1}\mathrm{Aut}(\Omega_j),$$ where the latter is a discrete group such that the quotient $(\Omega_1\times\dots\times \Omega_{d+1})/\Gamma$ is compact;

    \item \label{it:3} the irreducible factors of the representation of $\Gamma\rightarrow\mathrm{Isom}(X)^{\circ}$ are lattices in projectively admissible groups, and they are in a one-to-one correspondence with the irreducible factors of the representation $\pi\colon\Gamma\rightarrow\prod_{j=1}^{d+1} \mathrm{Aut}(\Omega_j)$;
    
    \item \label{it:4} there is a one-to-one correspondence between the irreducible factors of the De Rham decomposition of $X$ and the indecomposable factors of $\Omega$ which are not points;
    
    \item \label{it:5} if an indecomposable factor $\Omega_j$ is symmetric, then it is a projective model for an irreducible factor of the De Rham decomposition of $X$; the projection of $\Gamma$ to $\mathrm{Aut}(\Omega_j)$ is obtained by first projecting $\Gamma$ to the simple factor $F_j<\mathrm{Isom}(X)^{\circ}$ corresponding to $\Omega_j$, and then composing with the morphism $F_j\rightarrow \mathrm{Aut}(\Omega_j)$;
    
    \item \label{it:6} if $\Omega_j$ is non-symmetric, then it is divided by the projection to $\mathrm{Aut}(\Omega_j)$ of an irreducible factor $\Gamma_i$ of $\Gamma$ that corresponds to a lattice in $\mathrm{SO}_{n_j,1}^+(\mathbb{R})$, where $n_j=\mathrm{dim}(\Omega_j)$;
    
    \item \label{it:7} there is a homomorphism $\rho\colon\Gamma \rightarrow \matP(\matR_+^{d+1})/L$ such that $\Lambda$ is realised in $\mathrm{Aut}(\Omega)$ as the group of pairs of the form $(\gamma,\eta)\in \prod_{j=1}^{d+1}\mathrm{Aut}(\Omega_j)\times \matP(\matR^{d+1}_+)$ for $\eta \in \rho(\gamma)$.
    \end{enumerate}
\end{thm}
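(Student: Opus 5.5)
Our plan is to compare the centre of $\Lambda$ with the centraliser structure given by Benoist and Vey's Theorem \ref{teo:Vey}, and then handle the ``semisimple part'' $\Gamma$ through the projection $\pi$, reducing to Proposition \ref{prop:localmente-simmetrico} factor by factor.

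\emph{Step 1 (items \eqref{it:1} and \eqref{it:7}).} Lift $\Lambda$ to $\tilde\Lambda<\GL_{n+1}(\matR)$ as in Section \ref{sec:lifts}. By assumption \eqref{it:ass2:2}, $\tilde\Lambda$ is Zariski connected, so Theorem \ref{teo:Vey} gives $k+1$ indecomposable factors and shows that the centre of $\tilde\Lambda$ is a lattice $\cong\matZ^{k+1}$ in $\matR_+^{k+1}$. Projectivising, the centre of $\Lambda$ is a lattice in $\matP(\matR_+^{k+1})\cong\matR^k$. The centre of $\Lambda\cong\Gamma\times\matZ^d$ is $\matZ^d$ times the centre of $\Gamma$, and the latter is trivial by Lemma \ref{lemma:finitecenter} because $\Gamma$ is torsion free. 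Hence $k=d$ and the centre is a lattice $L$ in $\matP(\matR_+^{d+1})$. Writing each $\lambda\in\Lambda$ as $(\pi(\lambda),\eta)$ then gives the description in \eqref{it:7}, with $\rho(\gamma)$ the class of the homothety component modulo $L$. This is well defined because $L$ is exactly $\Lambda\cap\matP(\matR^{d+1}_+)$; the inclusion $\supseteq$ holds since such elements are central, and the reverse inclusion follows from the centraliser description.

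\emph{Step 2 (item \eqref{it:2}).} The kernel of $\pi|_\Lambda$ is $\Lambda\cap\matP(\matR^{d+1}_+)=L$, which is the $\matZ^d$ factor. Hence $\pi|_{\Gamma\times\{0\}}$ is injective. The image $\pi(\Lambda)=\pi(\Gamma)$ acts on $\prod_j\Omega_j$, which we identify with $\Omega/\matP(\matR^{d+1}_+)$ via the fibration of $\Omega$ over the product of the factors, each factor viewed as a section of its cone. Since $\Lambda$ acts cocompactly on $\Omega$ and $L$ is cocompact in the fibre group, $\pi(\Gamma)$ acts cocompactly on the product. Discreteness follows from properness: a sequence in $\pi(\Gamma)$ tending to the identity would lift, after correcting by elements of $L$, to a sequence in $\Lambda$ with bounded homothety part, which contradicts the discreteness of $\Lambda$. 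Consequently each projection $\Gamma\to\Aut(\Omega_j)$ has cocompact image action in the appropriate sense, and the image of $\pi(\Gamma)$ is a cocompact lattice in $\prod_j\Aut(\Omega_j)$ acting on a product of indecomposable domains.

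\emph{Step 3 (items \eqref{it:3}--\eqref{it:6}), the main obstacle.} We now have $\Gamma$ realised both as a cocompact lattice in $G=\Isom(X)^\circ$ and as a discrete cocompact subgroup of $\prod_j\Aut(\Omega_j)$. The point-factors of $\Omega$ contribute nothing.

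For the remaining factors, we apply Theorem \ref{teo:Benoist} to the Zariski closure. Symmetric factors have semisimple Zariski closure $\Aut(\Omega_j)^\circ$, and non-symmetric factors of dimension $n_j$ have closure $\PGL_{n_j+1}(\matR)$. We then decompose $\Gamma$ into irreducible factors $\Gamma_i<G_i$ and use Margulis superrigidity (Theorem \ref{teo:superrigidity}) on each factor of rank $\geq2$, or of rank one but not real or complex hyperbolic, to extend $\Gamma_i\to\Aut(\Omega_j)$ to continuous morphisms from $G_i$. The normal-subgroup and Zariski-density argument of Corollary \ref{cor:reducible-implies-decomposable} shows that each simple factor of $G$ maps nontrivially to exactly one $\Omega_j$. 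For a non-symmetric $\Omega_j$, the image of its factor must be discrete by Remark \ref{oss:non-simmetrico-discreto}. Hence the relevant $\Gamma_i$ divides $\Omega_j$ alone, and Proposition \ref{prop:localmente-simmetrico} forces $G_i=\SO^+_{n_j,1}(\matR)$, which gives \eqref{it:6}.

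For symmetric $\Omega_j$, the extended morphisms $F\to\Aut(\Omega_j)^\circ$ must be isomorphisms. To see this, use the cocompactness of Step 2 together with dimension counting, $\dim X=\sum_j\dim\Omega_j$, and Corollary \ref{cor:weak-Mostow}. This yields \eqref{it:5} and \eqref{it:4}. Proposition \ref{prop:rank-1} excludes the complex hyperbolic factors, as in the proof of Proposition \ref{prop:localmente-simmetrico}.

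The delicate part will be matching factors when $\Gamma$ projects densely to rank-one real hyperbolic factors of a reducible-looking product, where superrigidity fails. There we plan to argue using isotypicity of $G_i$ (from the existence of an irreducible lattice) and the fact that in higher rank $G_i$, superrigidity still applies to the whole irreducible lattice $\Gamma_i$. Item \eqref{it:3}, projective admissibility, then follows from the list in Theorem \ref{teo:indecomposable-symmetric}.
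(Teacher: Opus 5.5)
\textbf{Verdict.} Steps 1 and 2 are sound; your discreteness argument for $\pi(\Gamma)$ is more explicit than the paper's. The gap is in Step 3, at the claim that the argument of Corollary \ref{cor:reducible-implies-decomposable} shows each factor maps nontrivially to exactly one $\Omega_j$.

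\textbf{What that argument gives.} It only shows that, for distinct irreducible factors $\Gamma_i,\Gamma_{i'}$, the Zariski closures $Z_i,Z_{i'}$ of $\pi(\Gamma_i),\pi(\Gamma_{i'})$ commute. Hence they involve disjoint sets of simple factors of $Z=H_1\times\dots\times H_{d+1}$. It says nothing about how many factors a single $Z_i$ involves.

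\textbf{Where this matters.} When $G_i$ has real rank $\geq 2$, or is $\mathrm{PSp}(m,1)$ or $F_4^{-20}$, superrigidity gives $Z_i^\circ\cong G_i$, and the matching is fine. But when $G_i$ is $\SO^+_{m,1}(\matR)$ or $\mathrm{PSU}_{m,1}$, exactly where superrigidity fails, nothing you wrote excludes $Z_i=H_{i_1}\times\dots\times H_{i_q}$ with $q\geq2$ and $\pi(\Gamma_i)$ acting cocompactly on $\Omega_{i_1}\times\dots\times\Omega_{i_q}$. A single rank-one lattice can be Zariski dense in a product of several simple groups, so Zariski-theoretic arguments alone cannot rule this out.

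\textbf{Consequences for your Step 3.} Your sentence ``hence the relevant $\Gamma_i$ divides $\Omega_j$ alone'' does not follow. Discreteness of the projection to $\Aut(\Omega_j)$ neither kills the other projections nor makes $\Gamma_i\to\Aut(\Omega_j)$ injective. Injectivity is exactly what you need to invoke Proposition \ref{prop:localmente-simmetrico}, Proposition \ref{prop:rank-1} or Corollary \ref{cor:weak-Mostow}. For a symmetric $\Omega_j$ reached by a real hyperbolic $\Gamma_i$ there is no extended morphism $F\to\Aut(\Omega_j)^\circ$ at all. Dimension counting only constrains $\sum_k\dim\Omega_{i_k}$, so it does not help either. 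Your ``delicate part'' paragraph addresses irreducible lattices in products of rank-one groups, where superrigidity does apply. The genuinely delicate case is a lattice in a single rank-one group.

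\textbf{The missing step.} This is what the paper does: prove $q=1$ for rank-one $G_i$, using that a cocompact lattice in a rank-one group contains no $\matZ^2$.
\begin{itemize}
\item If all the $\Omega_{i_k}$ are symmetric, $\Gamma_i$ is a lattice in the non-simple group $Z_i^\circ$ of real rank $\geq2$. Hence it contains $\matZ^2$ by Prasad--Raghunathan \cite{PR}, or alternatively contradicts Corollary \ref{cor:weak-Mostow}.
\item If all are non-symmetric, the groups $\Aut(\Omega_{i_k})$ are discrete by Remark \ref{oss:non-simmetrico-discreto}. So $\pi(\Gamma_i)$ has finite index in their product of infinite groups, and contains $\matZ^2$.
\item In the mixed case, $\pi(\Gamma_i)$ is a cocompact lattice in $G'\times D$ with $D$ discrete. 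Lemma \ref{lemma:reducibility} makes it virtually a product of two infinite torsion-free groups, again yielding $\matZ^2$.
\end{itemize}
Only once $q=1$ is established does the projection $\Gamma_i\to\Aut(\Omega_j)$ coincide with the injective map $\pi|_{\Gamma_i}$. Then $\Gamma_i$ itself divides $\Omega_j$, and Proposition \ref{prop:localmente-simmetrico} gives items (4)--(6) and excludes $\mathrm{PSU}_{m,1}$.
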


\begin{proof}
Item \eqref{it:1} easily follows from Theorem \ref{teo:Vey}. Indeed, consider the lift $\widetilde{\Lambda}$ of $\Lambda$ to $\mathrm{GL}_{n+1}(\mathbb{R})$, where $n=\mathrm{dim}(\Omega)$. The centre of $\widetilde{\Lambda}$ is a lattice in the group $\mathbb{R}_+^{k+1}$ of independent homotheties in the cones above the $k+1$ indecomposable factors of $\Omega$, and is therefore isomorphic to $\mathbb{Z}^{k+1}$. The rank $d$ of the centre of $\Lambda$ is therefore equal to $k$.

We now prove item \eqref{it:2}. Since $\Gamma$ intersects $L=\Lambda \cap \mathbb{P}(\mathbb{R}_+^{d+1})$ trivially by \eqref{it:1}, $\pi$ is injective on $\Gamma$. The fact that the discrete group $\pi(\Gamma)$ acts cocompactly on $\Omega_1\times \dots\times \Omega_{d+1}$ follows directly from the cocompactness of the action of $\Lambda$ on $\Omega$.

Let us prove items \eqref{it:3}, \eqref{it:4}, \eqref{it:5} and \eqref{it:6}. We begin by noticing that the group $\prod_{j=1}^{d+1}\mathrm{Aut}(\Omega_j)$ is linear. Indeed each factor of the form $\mathrm{Aut}(\Omega_j)$ is realised as a subgroup of  $\mathrm{SL}_{n_j+1}(\mathbb{R})$, where $n_j=\mathrm{dim}(\Omega_j)$. 

Let us now consider the representation $\Gamma\rightarrow \mathrm{Isom}(X)^{\circ}$ and write the decomposition 
$$\Gamma=\Gamma_1 \times \dots \times \Gamma_{\ell}$$ of $\Gamma$ as a product of irreducible lattices (which certainly exists up to possibly passing to a finite-index subgroup). We denote by $Z_i$ the Zariski closure of $\pi(\Gamma_i)$ inside $\mathrm{GL}_K(\mathbb{R})$, where $K=\sum_{j=1}^{d+1} (n_j+1)$. Since $\Gamma_i$ and $\Gamma_j$ commute for every $i,j$, the same property holds for the respective Zariski closures $Z_i$ and $Z_j$. Moreover the Zariski closure $Z$ of $\pi(\Gamma)$ is generated by the Zariski closures of its irreducible factors.

By applying Benoist's density Theorem \ref{teo:Benoist} to a lift $\widetilde{\Lambda}$ of $\Lambda$ and projecting to the factor $\prod_{j=1}^{d+1}\mathrm{Aut}(\Omega_j)$, we deduce that the group $Z$ decomposes as a direct product of $\mathbb{R}$-simple factors
$$Z=H_1 \times \dots \times H_{d+1},$$
 in a one-to-one correspondence with the indecomposable factors of $\Omega$. For each indecomposable symmetric factor $\Omega_j$, we obtain a factor $H_j$ whose real points are of the form $\mathrm{PSO}_{m,1}(\matR)$ or $\mathrm{PSL}_m(\matK)$ with $\matK \in \{ \matR, \matC, \matH \}$ or $E_6^{-26}$ if $\mathrm{dim}(\Omega_i)\geq 1$, or the trivial group if $\Omega_i$ is a single point. If $\Omega_j$ is non-symmetric, we have that $H_j=\mathrm{PGL}_{n_j+1}(\matR)$.

Each of the groups $Z_1,\dots,Z_\ell$ is a connected, closed, normal subgroup of $Z$, and is therefore equal to a product of some of its $\mathbb{R}$-simple factors. We claim that if $i\neq j$, then $Z_i$ and $Z_j$ are products of distinct simple factors of $Z$. Indeed $Z_i$ and $Z_j$ commute with each other, so if $H_k<Z_i\cap Z_j$ for some $k=1,\dots,d+1$ then $H_k$ is abelian, which is clearly not the case.

Recall that $\Gamma_i$ is an irreducible factor of $\Gamma$ and is therefore an irreducible lattice in a subgroup $G_i<\mathrm{Isom}(X)^{\circ}$.  On the other hand $\Gamma_i$ is isomorphic to its image $\pi(\Gamma_i)< \mathrm{Aut}(\Omega_{1}) \times \dots \times \mathrm{Aut}(\Omega_{i_{d+1}})$, whose Zariski closure is $Z_i$ by definition, and therefore we see $\Gamma_i$ as a Zariski-dense subgroup of $Z_i$. Moreover $\Gamma_i$ acts cocompactly on the product of indecomposable factors $\Omega_{i_1} \times \dots \times \Omega_{i_q}$ where $Z_i=H_{i_1} \times \dots \times H_{i_q}$.

 In order to prove (3) we show that each $\Gamma_i$ is realised as an \emph{irreducible} lattice in $\mathrm{Aut}(\Omega_{i_1}) \times \dots \times \mathrm{Aut}(\Omega_{i_q})$. As a by-product of the case-by-case analysis that we will carry out, we will prove items (4)-(5)-(6). We have the following possible cases:
\begin{itemize}
\item The group $G_i$ is non-simple, and therefore has real rank $\geq 2$. Since $\Gamma_i$ is an irreducible lattice in $G_i$, by the Superrigidity Theorem $G_i$ is isotypic and isomorphic to $Z_i^{\circ}$. 
The projection of $\Gamma_i$ to each simple factor of $G_i$, and therefore to each simple factor of $Z_i^{\circ}$ is dense. This implies that $\mathrm{Aut}(\Omega_{i_k})$ is non-discrete, and by Remark \ref{oss:non-simmetrico-discreto} each $\Omega_{i_k}$ is symmetric (i.e. one of the domains in Theorem \ref{teo:indecomposable-symmetric}). Therefore the group $G_i$ is projectively admissible, and $\Gamma_i$ is realised as an irreducible lattice in $\mathrm{Aut}(\Omega_{i_1}) \times \dots \times \mathrm{Aut}(\Omega_{i_q})\cong G_i$.

\item The group $G_i$ is a simple Lie group of rank $\geq 2$, or $\mathrm{PSp}(m,1)$ or $F_4^{-20}$. Again by the Superrigidity Theorem, $G_i$ is isomorphic to $Z_i^{\circ}$, which is necessarily simple, i.e. $Z_i$ equals a simple factor of $Z$, and $\Gamma_i$ divides a single indecomposable factor $\Omega_{i_1}$  of $\Omega$. By Proposition \ref{prop:localmente-simmetrico}, $\Omega_{i_1}$ is symmetric and $G_i\cong Z_i^{\circ}\cong\mathrm{PGL}_m(\matK)^{\circ}$ with $m\geq 3$, or $E_6^{-26}$. The group $G_i$ is projectively admissible and $\Gamma_i$ is realised as an (irreducible) lattice in $\mathrm{Aut}(\Omega_{i_1})\cong G_i$.

\item The group $G_i$ is either $\mathrm{SO}^+_{m,1}(\matR)$ or $\mathrm{PSU}_{m,1}$. Here we cannot apply the Superrigidity Theorems to conclude that $G_i$ is isomorphic to $Z_i^{\circ}$. We examine the discrete, cocompact action of $\Gamma_i$ on $\Omega_{i_1}\times \dots\times \Omega_{i_q}$, and prove that $q=1$, so that $\Gamma_i$ is an irreducible lattice in $\mathrm{Aut}(\Omega_{i_1})$. We argue by contradiction and assume that $q>1$. 

At least one of $\Omega_{i_1},\dots,\Omega_{i_q}$ must be non-symmetric; otherwise $\Gamma_i$ would be realised as a lattice in the non-simple group $Z_i^{\circ}$. Let $r$ denote the real rank of $Z_i^{\circ}$. Since $Z_i^{\circ}$ has more than one non-compact factor $r\geq2$, and by \cite[Corollary 2.9]{PR} $\Gamma$ contains a free abelian group of rank $\geq 2$, which is impossible for a cocompact lattice in a rank-one Lie group.

If all the indecomposable factors $\Omega_{i_1}, \dots, \Omega_{i_q}$ are non-symmetric, then $\Gamma_i$ is a finite-index subgroup of the discrete group $\mathrm{Aut}(\Omega_{i_1})\times \dots \times \mathrm{Aut}(\Omega_{i_q})$ and it must be reducible. Again, this implies that $\Gamma$ contains a free abelian group of rank $\geq 2$, which is not possible for a lattice in a rank-one Lie group.

Now we set $I=\{i\in \{i_1,\dots,i_q\} \, |\, \Omega_i \text{\ is\ symmetric}\}$ and $I'=\{i_1,\dots,i_q\}\setminus I$. By what we have just shown, both sets are non-empty. We also set $G'=\prod_{i \in I}\mathrm{Aut}(\Omega_i)$ and $D=\prod_{i \in I'}\mathrm{Aut}(\Omega_i)$, so that $\Gamma_i<G' \times D$ is a cocompact lattice in the product of a semisimple Lie group of non-compact type $G'$ with a discrete group $D$. By Lemma \ref{lemma:reducibility} the lattice $\Gamma_i$ is reducible, which again contradicts the fact that $\Gamma$ is a lattice in a rank-one Lie group.

Since $q=1$, we see that $Z_i$ equals a simple factor $H_{j}$ of $Z$ and that $\Gamma_i$ divides the corresponding indecomposable factor $\Omega_{j}$. By Proposition \ref{prop:localmente-simmetrico} $G_i=\mathrm{SO}^+_{m,1}(\matR)$ is projectively admissible, and either $\Omega_{j}=\matP(\mathcal{L}_{n_j+1})$ with $Z_i=\mathrm{SO}_{n_j,1}(\matR)$, or $\Omega_j$ is non-symmetric and $Z_i=\mathrm{PGL}_{n_j+1}(\matR)$ with $n_j=\mathrm{dim}(\Omega_j)$.
\end{itemize}

Finally, we deal with \eqref{it:7}. The short exact sequence $$1\rightarrow L \rightarrow \Lambda \xrightarrow{\pi} \Gamma \rightarrow 1$$ induced by the projection $\pi$ induces the morphism $\rho\colon\Gamma \rightarrow \mathbb{P}(\mathbb{R}_+^{d+1})/L$ via $\rho(\gamma)=\lambda\cdot L$, where $(\gamma,\lambda) \in \prod_{j=1}^{d+1}\mathrm{Aut}(\Omega_j)\times \mathbb{P}(\mathbb{R}_+^{d+1})$ is any element of $\Lambda$ that projects to $\gamma$.
\end{proof}

\begin{proof} [Proof of Theorem \ref{teo:NPC-locally-symmetric}]
    The proof easily follows from  Theorem \ref{teo:ostruzioni-NPC-localmente-simmetrici}, as the latter implies that the representation of $\Lambda=\Gamma \times \mathbb{Z}^d$ as a lattice in $\mathrm{Aut}(\Omega)$ arises from the construction described in Section \ref{sec:construction}. More specifically, items \eqref{it:2}, \eqref{it:3} and \eqref{it:4} show that each irreducible factor $\Gamma_i$ of $\Gamma$ is a lattice in a projectively admissible group $G_i<G=\mathrm{Isom}(X)^{\circ}$, and acts discretely, cocompactly and irreducibly on a product $\Omega_1 \times \dots \times \Omega_{i_q}$ of indecomposable factors of $\Omega$ of positive dimension. 

    If the factor $\Gamma_i$ is not a real hyperbolic lattice, then $\Omega_{i_j}$ is a projective model for an irreducible factor of the de Rham decomposition of the symmetric space associated to $G_i$ and the action of $\Gamma_i$ on $\Omega_{i_j}$ is the ``obvious'' one by item \eqref{it:5}.

    If the factor $\Gamma_i$ is a real hyperbolic lattice then it acts on a single indecomposable factor $\Omega_i$ of $\Omega$, which can be an ellipsoid as in item \eqref{it:5} or non-symmetric as in item \eqref{it:6}.

    By item \eqref{it:1}, the domain $\Omega$ is the join $\Omega=\Omega_1 \ast \dots \ast \Omega_r \ast \Delta^h$, where $\Omega_1 ,\dots, \Omega_r$ are the positive-dimensional indecomposable domains acted upon by $\Gamma$ and $h = d - r$. Moreover, the centre $\mathbb{Z}^d$ of $\Lambda$ is realised as a lattice $L$ in $\mathbb{P}(\mathbb{R}_+^{d+1})$.

    By item \eqref{it:7} the subgroup $\Lambda<\mathrm{Aut}(\Omega)$ consists of the pairs $(\gamma,\eta)\in \prod_{j=1}^{d+1}\mathrm{Aut}(\Omega_j)\times \matP(\matR^{d+1}_+)$ for $\eta \in \rho(\gamma)$. 
\end{proof}

\section{Four-dimensional geometries} \label{sec:4d}

In this section we prove Theorem \ref{thm:geom-4d}. We will first deal with geometric 4-manifolds, and then with 4-manifolds admitting non-trivial geometric decompositions. The terminology is introduced here.

Let $X$ be a simply connected, complete, homogeneous, Riemannian manifold, and $G = \Isom(X)$. Then $X$ is called a \emph{geometry} (in the sense of Thurston) if there exists a lattice $\Gamma < G$ acting freely on $X$. In other words, $X$ is the Riemannian universal covering space of a finite-volume manifold $M = X/\Gamma$, which is said \emph{geometric} (with geometry $X$). 
A geometry $X$ is \emph{aspherical} if $X$ is contractible (so $M$ is aspherical), and \emph{solvable} if $G$ is solvable (so $\Gamma \cong \pi_1(M)$ is solvable). 

A closed manifold $M$ admits a \emph{geometric decomposition} if it has a (possibly empty or disconnected) two-sided, closed hypersurface $F$ such that the connected components $M_1, \ldots, M_k$ of $M \setminus F$, called the \emph{pieces}, are geometric. In other words, each $M_i$ is diffeomorphic to a complete finite-volume manifold $X_i/\Gamma_i$ with geometry $X_i$. The decomposition is \emph{trivial} if $F$ is empty (so $M$ is geometric).

\subsection{Geometric four-manifolds} \label{sec:geometric}

The four-dimensional geometries have been classified by Filipkiewicz \cite{F}; see Hillman's book \cite{H}. They are 18 plus a countable infinite family: 3 are compact, 3 are non-compact products with spheres, and the remaining ones are aspherical. Among the aspherical ones, 5 plus the ones of the infinite family are solvable. The aspherical, non-solvable geometries are the remaining 7:
$$\matH^4, \quad \matH^2(\matC), \quad \matF^4, \quad \matH^2 \times \matH^2, \quad \matH^2 \times \matE^2, \quad \matH^3 \times \matE^1, \quad \widetilde{\matS\matL} \times \matE^1,$$
where $\widetilde{\matS\matL}$ denotes the universal cover of $\SL_2(\matR)$, and $\matF^4$ the tangent space of $\matH^2$. There is no closed manifold with $\matF^4$ geometry. 
Among these 7, $\matH^4$, $\matH^2(\matC)$, $\matH^2 \times \matH^2$, $\matH^2 \times \matE^2$ and $\matH^3 \times \matE^1$ are {symmetric spaces, and convex projective manifolds admitting} any of these 5 geometries have already been studied in Section \ref{sec:spazi-simmetrici}. (Removing the asphericity assumption, the symmetric geometries are all possible products of $\matH^p$, $\matE^q$, $\mathbb S^r$, plus $\matH^2(\matC)$ and $\matC\matP^2$.)

The following results of Islam and Zimmer \cite[Proposition 5.1--(3) and Theorem 1.4]{IZ} hold in the more general context of convex cocompact actions. 

\begin{thm}[Islam--Zimmer \cite{IZ}] \label{thm:IZ}
    Let $\Gamma$ divide $\Omega \subset \matRP^n$, and $A$ be an infinite virtually solvable subgroup of $\Gamma$. Then:
    \begin{enumerate}
        \item \label{it:abelian} $A$ is virtually abelian;
        \item \label{it:centraliser} the centraliser $C_\Gamma(A)$ divides $\Omega \cap \matP(V)$, for some vector subspace $V$ of $\matR^{n+1}.$
    \end{enumerate}
\end{thm}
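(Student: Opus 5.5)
Since Theorem \ref{thm:IZ} is imported from \cite{IZ}, the following is only a sketch of how I would reprove it. The plan follows the CAT(0) template of Bridson--Haefliger and Ruane, replacing the CAT(0) metric by the Hilbert metric $d_\Omega$ and using Benzécri compactness to compensate for the fact that $d_\Omega$ is not uniquely geodesic and its distance function is not strictly convex. We may pass to finite-index subgroups freely, and we lift everything to $\SL^\pm_{n+1}(\matR)$ via Proposition \ref{prop:lift}.

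\textbf{Step 1: every element is semisimple for $d_\Omega$.} For $g\in\Gamma$, consider the translation length $\tau(g)=\inf_{x\in\Omega}d_\Omega(x,gx)$. Because $\Gamma$ divides $\Omega$, a standard compactness argument (translate an almost-minimising sequence into a fixed compact fundamental domain and use discreteness) should show that $\tau(g)$ is attained. It should also express $\tau(g)=\tfrac12\log(\lambda_1/\lambda_{n+1})$ in terms of the moduli of the extreme eigenvalues. Moreover, the Jordan blocks of maximal size occur for eigenvalues of maximal modulus, and the minimal set
$$\mathrm{Min}(g)=\Omega\cap\matP(V_g),$$
where $V_g$ is the sum of the eigenspaces of extreme modulus together with the $g$-invariant complement, is a non-empty properly convex slice.

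\textbf{Step 2: part (1).} Take $A$ infinite and virtually solvable, and pass to a finite-index subgroup whose Zariski closure is connected and solvable. By Lie--Kolchin there is a common triangular form, so the elements of $A$ have compatible generalised eigenspace decompositions. I would then intersect the minimal sets of a finite generating set of a maximal abelian normal subgroup and argue by induction on the derived length. The goal is to show that $A$ acts cocompactly on a properly convex slice
$$\Omega\cap\matP(W)$$
on which the image of $A$ is virtually diagonalisable, hence virtually abelian. The key point is to rule out a non-trivial unipotent part in the Zariski closure. I would do this by observing that a unipotent normal subgroup would have elements with $\tau=0$ that are not elliptic. That contradicts Step 1, since an element realising $\tau=0$ fixes a point of $\Omega$ and hence has finite order by discreteness.

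\textbf{Step 3: part (2).} Let $A$ be (virtually) abelian. Take $V$ to be the intersection over a generating set of $A$ of the subspaces $V_g$ from Step 1, so that $\mathrm{Min}(A)=\Omega\cap\matP(V)$. The centraliser $C_\Gamma(A)$ preserves each generalised eigenspace of each element of $A$, and therefore preserves $\Omega\cap\matP(V)$. It remains to show that the action of $C_\Gamma(A)$ on this slice is cocompact. I would adapt Ruane's argument as follows. Suppose points $x_k\in\Omega\cap\matP(V)$ escape every compact set modulo $C_\Gamma(A)$. Choose $\gamma_k\in\Gamma$ with $\gamma_kx_k$ in a fixed compact set. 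The elements $\gamma_k a\gamma_k^{-1}$, for $a$ in a finite generating set of $A$, move $\gamma_kx_k$ by a bounded amount, so by discreteness they take finitely many values. Passing to a subsequence, $\gamma_j^{-1}\gamma_k$ centralises $A$, which gives the contradiction.

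\textbf{Expected obstacle.} The main difficulty is Step 1, and in particular the identification of $\mathrm{Min}(g)$ with a projective slice $\Omega\cap\matP(V_g)$. In CAT(0) geometry this follows from the convexity of the displacement function. In Hilbert geometry the displacement function is only quasi-convex, and Jordan blocks can produce flat pieces of the boundary. I would first try Benoist's description of the dynamics of elements of dividing groups (proximality of the top eigenvalue on each indecomposable factor, together with the control on Jordan block sizes), combined with Benzécri's theorem that the space of pointed properly convex domains is compact modulo $\PGL_{n+1}(\matR)$. Together these should obtain the needed limiting arguments without strict convexity.
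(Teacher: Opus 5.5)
The paper does not prove this statement; it quotes \cite[Proposition 5.1--(3) and Theorem 1.4]{IZ}. Your sketch therefore has to stand on its own, and part (2) does not. Everything there depends on identifying $\mathrm{Min}(A)$ with an open slice $\Omega\cap\matP(V)$, which you neither prove nor formulate correctly.

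First, $\mathrm{Min}(g)$ is not determined by the linear algebra of $g$. The matrix $g=\mathrm{diag}(\lambda,1,\lambda^{-1})$ preserves the triangle $\{[x:y:z] : x,y,z>0\}$, on which (Hilbert metric of a simplex) its displacement is constant, so $\mathrm{Min}(g)$ is the whole triangle. The same matrix also preserves the disc $\{xz>y^2\}$, where $\mathrm{Min}(g)$ is only the axis $\{y=0\}$. Both situations occur inside dividing groups. The middle eigenline lies in $V_g$ in one case and not in the other, so no recipe built from eigenspaces of $g$ can work. $V_g$ must come from how $\partial\Omega$ sits relative to $g$, and the real content is showing that $\mathrm{Min}(g)$ is a full open slice, not merely a closed convex subset of one.

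Second, even granting slices for single elements, nothing ensures that $\bigcap_a\matP(V_a)$ meets $\Omega$. Non-emptiness of $\mathrm{Min}(A)$ is a flat-torus-type theorem. In CAT(0) it uses the splitting $\mathrm{Min}(g)=Y\times\matR$, and you offer no substitute for that splitting.

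Third, your Ruane argument is sound but purely metric. It uses $d(x_k,ax_k)=\tau(a)$, so it only shows that $C_\Gamma(A)$ acts cocompactly on $\mathrm{Min}(A)$. At points of $\Omega\cap\matP(V)$ outside $\mathrm{Min}(A)$ the displacement of $a$ is unbounded, so concluding that $C_\Gamma(A)$ divides $\Omega\cap\matP(V)$ requires exactly the missing identification. Invoking Benz\'ecri compactness and Benoist's dynamics does not by itself supply it; that is the substance of Islam and Zimmer's work.

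Part (1) follows from your Step 1 more directly than you propose, though not through the claim you state. Your compactness argument shows that $\tau$ is attained. Together with $\tau=\tfrac12\log(\lambda_1/\lambda_{n+1})$, this shows that $\Gamma$ contains no nontrivial unipotent element: such a $u$ has $\tau(u)=0$, fixes a point of $\Omega$, and so has finite order.

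This does not rule out a unipotent part in the Zariski closure of $A$. If some $g\in A$ had a nontrivial Jordan block at an intermediate modulus, the Zariski closure of $\langle g\rangle$ would contain the unipotent part $g_u\neq 1$ of $g$, while $\langle g\rangle$ itself contains no unipotent element. The elementwise statement is all you need, though. Take $A'<A$ of finite index with connected solvable Zariski closure $G$. By Lie--Kolchin, $[A',A']\subset[G,G]$ consists of unipotent elements of $\Gamma$, so it is trivial and $A'$ is abelian. The intersections of minimal sets and the induction on derived length are unnecessary for (1).
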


Item \eqref{it:centraliser} will be used only in  Section \ref{sec:geometric-dec}. We are ready to prove the ``geometric part'' of Theorem \ref{thm:geom-4d}:

\begin{prop} \label{prop:geometric}
    Let $M$ be a closed, aspherical, geometric $4$-manifold. If $\pi_1(M)$ divides an indecomposable domain $\Omega$, then $M$ is real hyperbolic.
\end{prop}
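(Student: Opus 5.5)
The plan is to go through Filipkiewicz's classification of the aspherical four-dimensional geometries and exclude every one except $\matH^4$, using the structural results collected earlier. Let $\Gamma \cong \pi_1(M)$ divide the indecomposable domain $\Omega$. Since $M$ is closed and aspherical, its geometry $X$ is one of the aspherical geometries that admit cocompact lattices. These fall into two groups: the solvable ones (the five solvable geometries together with the infinite family), and the seven non-solvable ones listed above with $\matF^4$ removed, since $\matF^4$ has no closed quotients.

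\emph{Solvable geometries.} If $X$ is solvable, then $\Gamma$ is solvable. Being infinite, $\Gamma$ is virtually abelian by Theorem \ref{thm:IZ}--\eqref{it:abelian}, applied with $A = \Gamma$. A finitely generated torsion-free virtually abelian group virtually has infinite centre, since a finite-index $\matZ^4$ subgroup is its own centre. By Corollary \ref{cor:not-product}, $\Omega$ would then be decomposable, which is a contradiction.

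\emph{Geometries with a Euclidean factor.} Next consider $\matH^3\times\matE^1$, $\matH^2\times\matE^2$ and $\widetilde{\matS\matL}\times\matE^1$. The first two are NPC symmetric spaces with $d\geq 1$. Theorem \ref{thm:Eberlein}--\eqref{it:eberlein:1},\eqref{it:eberlein:3} shows that $\Gamma$ virtually has centre $\matZ^d$, which is infinite, and Corollary \ref{cor:not-product} again gives a contradiction. (Alternatively, Theorem \ref{teo:NPC-locally-symmetric-intro} says that the structure is decomposable exactly when $d>0$.)

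For $\widetilde{\matS\matL}\times\matE^1$, I would argue directly. The identity component of $\Isom(X)$ is $(\widetilde{\SL}_2(\matR)\times\matR)/(\text{discrete central})$, up to finite index, and its centre contains $\matR$ together with the infinite cyclic centre of $\widetilde{\SL}_2$. A lattice $\Gamma$ meets the radical of this group in a lattice of the radical; this is Wang's result \cite[Corollary 8.28]{Rag}. Hence, after passing to a finite-index subgroup, $\Gamma$ intersects the centre in an infinite group. More concretely, $M$ is virtually a product $N^3\times S^1$ or a circle bundle whose fibre is central in $\pi_1$. Either way $\Gamma$ virtually has infinite centre, and Corollary \ref{cor:not-product} applies. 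This step is the main obstacle: the centre of a lattice is not automatically large, so one has to verify carefully that the central $\matR$-direction meets $\Gamma$ in an infinite group. I would first try Wang's theorem on lattices intersecting the solvable radical, and fall back on the Seifert-fibred structure of $\widetilde{\matS\matL}$-manifolds if needed.

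\emph{The remaining symmetric geometries.} This leaves $\matH^4$, $\matH^2(\matC)$ and $\matH^2\times\matH^2$, all of non-compact type. For $\matH^2(\matC)$, Proposition \ref{prop:rank-1} excludes it, or more simply the signature argument of Corollary \ref{cor:chi-sigma}. For $\matH^2\times\matH^2$, a cocompact lattice dividing $\Omega$ must, by Proposition \ref{prop:localmente-simmetrico}, be irreducible, with $\Omega$ indecomposable. Case \eqref{it:non-cpt-type:hyp} of that proposition would force $G=\SO^+_{4,1}(\matR)$, which fails. Case \eqref{it:non-cpt-type:non-hyp} would force $\PSL_2(\matR)\times\PSL_2(\matR)\cong\Aut(\Omega)^\circ$ for an indecomposable symmetric domain, and this is impossible because every group in Theorem \ref{teo:indecomposable-symmetric} is simple. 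Hence the geometry is $\matH^4$, and $M$ is real hyperbolic.
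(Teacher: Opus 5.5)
Your proposal is correct and is essentially the paper's proof. It runs the same case analysis over Filipkiewicz's list: Theorem \ref{thm:IZ}--\eqref{it:abelian} with Corollary \ref{cor:not-product} for the solvable geometries, Wang's theorem on the radical to get a virtually infinite centre for $\widetilde{\matS\matL}\times\matE^1$ (the step you flagged, resolved as you propose), Proposition \ref{prop:localmente-simmetrico} for $\matH^2\times\matH^2$, and Proposition \ref{prop:rank-1} for $\matH^2(\matC)$. The one difference is that you use Eberlein's Theorem \ref{thm:Eberlein} rather than Wang's theorem for $\matH^3\times\matE^1$ and $\matH^2\times\matE^2$; this reaches the same conclusion and is slightly cleaner for $\matH^2\times\matE^2$, where the radical $\matR^2\rtimes\SO(2)$ of $\Isom(X)^\circ$ is not itself central.
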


\begin{proof}
    We have $M = X/\Gamma$ for a cocompact lattice $\Gamma$ in $G = \Isom(X)$, and $X$ is contractible. Hence Filipkiewicz's classification leaves us with the following cases:
\begin{itemize}
    \item If $G$ is solvable, so is $\Gamma \cong \pi_1(M)$. Then $\pi_1(M)$ is virtually abelian by Theorem \ref{thm:IZ}--\eqref{it:abelian}, and this contradicts the fact that $\Omega$ is indecomposable by Corollary \ref{cor:not-product}. 
    \item If $X$ is 
    $\matH^2 \times \matE^2, \matH^3 \times \matE^1$ or $\widetilde{\matS\matL} \times \matE^1$, then the radical $\sqrt{G}$ of the identity component $G$ of $\mathrm{Isom}(X)$ equals the centre of $G$ and is isomorphic to $\mathbb{R}$ or $\mathbb{R}^2$. By a theorem of Wang \cite[Corollary 8.28]{Rag}, $\Gamma \cap \sqrt{G}$ is a lattice in $\sqrt{G}$. Therefore $\Gamma$ has infinite centre, and this contradicts Corollary \ref{cor:not-product}.
    \item If $X = \matH^2 \times \matH^2$, {we have a contradiction to Proposition \ref{prop:localmente-simmetrico}.}  
    \item Otherwise, $X$ is the real or complex hyperbolic space, but the complex case is excluded by Proposition \ref{prop:rank-1}.
\end{itemize}
\end{proof}

\subsection{Geometric decompositions} \label{sec:geometric-dec}

Let $M$ admit a non-trivial geometric decomposition. Each piece $M_i = X_i/\Gamma_i$ is diffeomorphic to the interior of a compact manifold $\overline{M}_i$ with boundary. By a little abuse, we will call $\partial \overline{M}_i$ the \emph{boundary} of $M_i$. We call \emph{cusp group} any subgroup of $\Gamma_i$ induced by the inclusion of a component of $\partial \overline{M}_i$ in $\overline{M}_i$. Note that $\pi_1(M)$ is the fundamental group of a graph of groups with vertex groups the fundamental groups $\Gamma_1, \ldots, \Gamma_k$ of the pieces and edge groups some cusp groups. 

We will make use of the following result of Hillman, here conveniently stated for our purposes:

\begin{thm}[Hillman \cite{H}] \label{thm:hillman} 
    If a closed aspherical $4$-manifold $M$ admits a non-trivial geometric decomposition, then one of the following holds:
    \begin{enumerate}
        \item \label{it:HpxEq} the pieces have geometry $\matH^4$, $\matH^3 \times \matE^1$, $\matH^2 \times \matE^2$ or $\widetilde{\mathbb{SL}} \times \matE^1$; equivalently, the cusp groups are virtually abelian;
        \item \label{it:H2C-F4} the pieces have $\matH^2(\matC)$ or $\mathbb{F}^4$ geometry; equivalently, the cusp groups are nilpotent but not virtually abelian;
        \item \label{it:H2xH2irred} the pieces have irreducible $\matH^2 \times \matH^2$ geometry; equivalently, the cusp groups are solvable but not nilpotent;
        \item \label{it:H2xH2red} the pieces have reducible $\matH^2 \times \matH^2$ geometry, and each piece is virtually the product of a closed and a cusped surface; equivalently, each cusp group is virtually isomorphic to the product of a surface group with $\matZ$.
    \end{enumerate}
\end{thm}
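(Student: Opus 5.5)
The statement is attributed to Hillman and is a repackaging of results from \cite{H} (and \cite{Hpaper}) on 4-manifolds with geometric decompositions. So the plan is to assemble it from Hillman's analysis together with Filipkiewicz's classification, rather than to prove it from scratch.

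First I will list the possible geometries of the pieces. Each piece $M_i$ is a non-compact, finite-volume, geometric $4$-manifold whose boundary components are closed aspherical $3$-manifolds. Since $M$ is aspherical and the decomposing hypersurface is incompressible, the pieces are aspherical. Going through Filipkiewicz's list, I will discard the compact and sphere-product geometries by asphericity. I will discard the solvable geometries because their finite-volume quotients are compact, which would make the decomposition trivial. This leaves $\matH^4$, $\matH^2(\matC)$, $\matF^4$, $\matH^2\times\matH^2$, $\matH^2\times\matE^2$, $\matH^3\times\matE^1$ and $\widetilde{\matS\matL}\times\matE^1$, each admitting non-cocompact lattices.

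Next I will identify the cusp groups geometry by geometry, using the structure of the ends of finite-volume quotients. For $\matH^4$ the cusps are flat $3$-manifolds, so the cusp groups are virtually $\matZ^3$. For $\matH^3\times\matE^1$, $\matH^2\times\matE^2$ and $\widetilde{\matS\matL}\times\matE^1$, a lattice virtually splits off (or centrally extends) a finite-volume lattice in the hyperbolic factor. Hence the ends are virtually products of cusp tori with tori, and the cusp groups are virtually abelian. For $\matH^2(\matC)$ the cusps are Heisenberg nilmanifolds, and for $\matF^4$ Hillman shows the same, so the cusp groups are nilpotent but not virtually abelian. For $\matH^2\times\matH^2$ I will split into two cases.
\begin{enumerate}
\item For irreducible lattices, the $\matQ$-rank is one and the ends are $\mathrm{Sol}$-manifolds, so the cusp groups are solvable and not nilpotent.
\item For reducible lattices, the piece is virtually $\Sigma\times F$ with $F$ a cusped surface. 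Since $\Sigma\times F$ non-compact with finite volume forces exactly one factor to have cusps, the ends are $\Sigma\times S^1$ and the cusp groups are virtually (surface group)$\times\matZ$.
\end{enumerate}

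The main obstacle is the claim that all pieces of the decomposition fall in the \emph{same} case. I will handle it through the graph-of-groups structure: cusp groups of adjacent pieces are identified along edge groups, and the four algebraic classes of cusp groups are mutually exclusive and invariant under commensurability. These classes are virtually abelian; nilpotent but not virtually abelian; solvable but not nilpotent; and virtually containing a non-abelian surface group. Therefore adjacent pieces share the same class, and connectedness of $M$ propagates this to all pieces. This also yields the ``equivalently'' clauses, since each class of cusp groups determines the corresponding set of geometries.
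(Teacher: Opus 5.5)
Your overall plan (Filipkiewicz's list plus Hillman's analysis, reading off the cusp groups geometry by geometry, then propagating the class of cusp group along the gluings) is the same as the paper's. However, your reducible $\matH^2\times\matH^2$ step is false, and it is exactly the exceptional case the paper has to treat separately. A non-compact finite-volume quotient of $\matH^2\times\matH^2$ by a reducible lattice is virtually $F_1\times F_2$, with $F_1,F_2$ finite-volume hyperbolic surfaces. Nothing forces one factor to be closed: if both are cusped, the product still has finite volume. Its end then has cross-section
$$\partial(\overline{F}_1\times\overline{F}_2)=(\partial\overline{F}_1\times\overline{F}_2)\cup(\overline{F}_1\times\partial\overline{F}_2).$$
This is a closed graph manifold with non-trivial JSJ decomposition, since the Seifert fibres of the two kinds of pieces meet the tori $\partial\overline{F}_1\times\partial\overline{F}_2$ in different slopes. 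Its fundamental group is neither virtually solvable nor virtually (surface group)$\times\matZ$. So your list of cusp groups is incomplete, and your argument does not establish the first half of item (4), that each piece is virtually the product of a closed and a cusped surface.

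This configuration cannot be ruled out by looking at the geometry of a single piece; it must be excluded using that $M$ is aspherical. The paper does this by invoking Hillman \cite[page 141]{H}: if such a piece occurs, there are exactly two pieces and $M$ is not aspherical.

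Your claim that the decomposing hypersurface is incompressible is not a hypothesis of the theorem, and it fails precisely here. $\partial(\overline{F}_1\times\overline{F}_2)$ is a closed aspherical $3$-manifold, so its fundamental group has cohomological dimension $3$. It therefore cannot inject into $\pi_1(F_1)\times\pi_1(F_2)$, a product of free groups of cohomological dimension $2$. Assuming incompressibility thus silently discards the one case that needs an argument. In the paper, $\pi_1$-injectivity of the cusps is only deduced afterwards, from the list of geometries.

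A minor point: since nilpotency is not a commensurability invariant, state your classes up to finite index for the propagation argument. Use ``virtually nilpotent but not virtually abelian'' and ``virtually solvable but not virtually nilpotent''.
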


\begin{proof}
    Hillman's \cite[Theorem 7.2]{H} gives the first statement of each item, observing that (the geometry of) every piece must be aspherical. The proof easily follows by analysing the (known: see the references in \cite{H}) topology of the cusps in the aspherical geometries and observing that, topologically, they have to match well in $M$. (In particular, if a piece has one of the geometries in an item, then every piece must have one of those geometries.) So, the second statements essentially follow from the proof of \cite[Theorem 7.2]{H}. With one exception in case \eqref{it:H2xH2red}: if a piece has reducible $\matH^2 \times \matH^2$ geometry, it may also be a virtual product of two cusped surfaces. But in this case (there are exactly two pieces and) $M$ is not aspherical \cite[page 141]{H}. 
\end{proof}

We are ready to conclude the proof of Theorem \ref{thm:geom-4d}.

\begin{prop} \label{prop:geom-dec}
    Let $M$ be a closed aspherical $4$-manifold admitting a non-trivial geometric decomposition. If $\pi_1(M)$ divides an indecomposable domain $\Omega$, then all the pieces are real hyperbolic.
\end{prop}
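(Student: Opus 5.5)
We go through the four cases of Hillman's Theorem \ref{thm:hillman} and rule out everything except real hyperbolic pieces. Write $\Gamma=\pi_1(M)$, dividing the indecomposable domain $\Omega\subset\matRP^4$. Since $M$ is aspherical and the decomposition is along incompressible hypersurfaces, each piece group $\Gamma_i$ and each cusp group injects into $\Gamma$.

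\textbf{Cases \eqref{it:H2C-F4} and \eqref{it:H2xH2irred}.} Here a cusp group is an infinite solvable subgroup of $\Gamma$ that is not virtually abelian. This contradicts Theorem \ref{thm:IZ}--\eqref{it:abelian}.

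\textbf{Case \eqref{it:H2xH2red}.} A cusp group $A$ is virtually $\pi_1(S)\times\matZ$, where $S$ is a closed hyperbolic surface. Take the infinite cyclic factor $Z\cong\matZ$. By Theorem \ref{thm:IZ}--\eqref{it:centraliser}, $C_\Gamma(Z)$ divides $\Omega\cap\matP(V)$ for some subspace $V$. This centraliser contains a finite-index subgroup of $A$, in fact a whole piece group, which is virtually $\pi_1(S)\times\pi_1(S')$.
\begin{itemize}
\item If $C_\Gamma(Z)$ had cohomological dimension $\ge 4$, it would have finite index in $\Gamma$. Then $\Gamma$ would virtually have infinite centre, contradicting Corollary \ref{cor:not-product}.
\item So $\Omega\cap\matP(V)$ has dimension $3$, and the centraliser is (virtually) the cusp group $\pi_1(S)\times\matZ$, which has infinite centre. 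But $C_\Gamma(Z)$ contains a piece group $\pi_1(S)\times\pi_1(S')$ of cohomological dimension $4$, which is impossible in a group dividing a $3$-dimensional domain.
\end{itemize}
Either way we get a contradiction. A cleaner route may be to compare cohomological dimensions directly.

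\textbf{Case \eqref{it:HpxEq}.} We must exclude pieces with geometry $\matH^3\times\matE^1$, $\matH^2\times\matE^2$ or $\widetilde{\matS\matL}\times\matE^1$. For such a piece $M_i=X_i/\Gamma_i$, Wang's theorem (as in Proposition \ref{prop:geometric}) gives that $\Gamma_i$ virtually has infinite centre $Z_i\cong\matZ$ or $\matZ^2$. Apply Theorem \ref{thm:IZ}--\eqref{it:centraliser} to $A=Z_i$: the centraliser $C_\Gamma(Z_i)$ contains a finite-index subgroup of $\Gamma_i$ and divides a convex subdomain $\Omega\cap\matP(V)$.
\begin{itemize}
\item $V\ne\matR^5$. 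Otherwise $C_\Gamma(Z_i)$ would have finite index in $\Gamma$, so $\Gamma$ would virtually have infinite centre, contradicting Corollary \ref{cor:not-product}.
\item Hence $C_\Gamma(Z_i)$ divides a domain of dimension $\le 3$ and has cohomological dimension $\le 3$.
\item But $\Gamma_i$ is the fundamental group of a finite-volume non-compact aspherical $4$-manifold, so its cohomological dimension is $3$. This alone is not yet a contradiction.
\end{itemize}
To conclude, $C_\Gamma(Z_i)$ divides a properly convex domain $\Omega'$ of dimension $3$, and it contains virtually $\Gamma_i$ together with the central subgroup $Z_i$. So $C_\Gamma(Z_i)$ virtually has infinite centre, and by Corollary \ref{cor:not-product} the domain $\Omega'$ is decomposable. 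Then $C_\Gamma(Z_i)$ is virtually $\pi_1$ of a closed $3$-manifold with geometry $\matH^2\times\matE^1$ or $\matE^3$. Now $\Gamma_i$ is a finite-index subgroup of a closed $3$-manifold group, since they have equal cohomological dimension $3$ and the subgroup is a Poincaré duality group (Strebel). But $\Gamma_i$ has cusp subgroups of rank-$3$ virtual type (for example $\matZ^3$ or a nilpotent group) in its boundary. That forces the pieces to be cusped $\matH^3\times\matE^1$ manifolds, whose groups are not closed $3$-manifold groups, which gives the contradiction. Pieces of geometry $\matH^2\times\matE^2$ and $\widetilde{\matS\matL}\times\matE^1$ are handled the same way using the rank-$1$ or rank-$2$ centre.

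The main obstacle is this last step: producing a genuine contradiction once the centraliser is known to divide a lower-dimensional slice. The first attempt is to argue via cohomological dimension and Strebel's theorem that a finite-index subgroup of $\Gamma_i$, the group of a non-compact finite-volume $4$-manifold, cannot be a closed $3$-manifold group. The fallback is the irreducibility of the $\Gamma$-action on $\matR^5$ from Theorem \ref{teo:Vey}, applied to the normaliser of $Z_i$.
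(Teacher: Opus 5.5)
Your cases \eqref{it:H2C-F4} and \eqref{it:H2xH2irred} coincide with the paper. Case \eqref{it:HpxEq} can be completed, but your last step is garbled: nothing ``forces the pieces to be $\matH^3\times\matE^1$''. Also, Strebel's theorem needs the \emph{ambient} group to be a Poincar\'e duality group, not the subgroup. The paper's finish is shorter and skips the decomposability detour. The centraliser $C_\Gamma(Z_i)$ divides a $3$-dimensional slice, so it is a $\mathrm{PD}(3)$ group. It contains a cusp subgroup $H_P\cong\matZ^3$ of $\Gamma_i'$, so by Strebel $H_P$ has finite index in it. Then $C_\Gamma(Z_i)$ would be virtually abelian, which is absurd since it contains $\Gamma_i'$, a group with non-abelian free subgroups.

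The genuine gap is case \eqref{it:H2xH2red}. A piece is virtually $\pi_1(F_i)\times\pi_1(B_i)$, with $F_i$ a \emph{cusped} and $B_i$ a closed hyperbolic surface. The $\matZ$-factor of a cusp group is generated by a peripheral element $c\in\pi_1(F_i)$. Centralisers of non-trivial elements in the free group $\pi_1(F_i)$ are cyclic, so $C_\Gamma(c)$ meets the piece group only in (virtually) $\langle c\rangle\times\pi_1(B_i)$. It does not contain a whole piece group. Moreover, the piece group has cohomological dimension $1+2=3$, not $4$. So your second bullet collapses: $C_\Gamma(c)$ dividing a $3$-dimensional cone $\{p\}\ast\Omega_c$ is perfectly consistent, and this is in fact what happens. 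No argument localised at a single cusp gives a contradiction; a global one is needed.

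The paper lets $g$ range over \emph{all} non-trivial elements of $\pi_1(F_i)$. By Theorem \ref{thm:IZ}--\eqref{it:centraliser} and Theorem \ref{teo:Vey}, each $C_\Gamma(g)$ is virtually $\langle g\rangle\times\pi_1(B_i)$. It divides a slice $\{p_g\}\ast(\Omega\cap\matP(W_g))$ with $\dim W_g=3$, where $\pi_1(B_i)$ acts irreducibly on $W_g$ and $g$ acts by homotheties. Two $3$-dimensional subspaces of $\matR^5$ meet non-trivially, and $W_g\cap W_{g'}$ is $\pi_1(B_i)$-invariant, so irreducibility forces $W_g=W_{g'}$. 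Hence a single proper subspace $W$ is invariant under $\pi_1(F_i)\times\pi_1(B_i)$. Through the identifications of cusp groups, $W$ is invariant under every piece group and so under $\Gamma$. This contradicts the irreducibility of the lifted action given by Theorem \ref{teo:Vey} for indecomposable $\Omega$. Your ``fallback'' of invoking irreducibility is the right instinct, but it must be deployed in this case, and in this way.
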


\begin{proof}
    For each non-compact manifold admitting one of the geometries in Theorem \ref{thm:hillman}, the cusps are well known to be $\pi_1$-injective. Therefore the fundamental groups of the pieces inject in $\pi_1(M) = \Gamma$, and the cusp groups in $\Gamma$ correspond to the fundamental groups of the boundary components of the pieces.
    
    Suppose we are in case \eqref{it:HpxEq} of Theorem \ref{thm:hillman}, and assume that a piece $M_i$ is not hyperbolic. Then the piece has geometry $X=\mathbb{H}^3 \times \mathbb{E}^1$, $\mathbb{H}^2 \times \mathbb{E}^2$ or $\widetilde{\mathbb{SL}} \times \matE^1${. Let $G$ be the identity component of $\mathrm{Isom}(X)$ and $\sqrt{G}$ its radical. If $X = \matH^3 \times \matE^1$ set $\matK = \matC$ and $q = 3$, otherwise set $\matK = \matR$ and $q=2$, so that $\PSL_2(\matK)$ is the identity component of $\Isom(\matH^q)$. We have a short exact sequence $$1\rightarrow \sqrt{G} \rightarrow G \xrightarrow{p} \mathrm{PSL}_2(\mathbb{K})\rightarrow 1.$$} In all cases $\sqrt{G}$ equals the centre of $G$, and what distinguishes the case $X=\mathbb{H}^2\times \mathbb{R}^2$ from $X=\widetilde{\mathbb{SL}} \times \matE^1$ is that in the latter case the sequence does not split. 
    
    Let $\Gamma_i$ denote the fundamental group of the piece $M_i$. By \cite[Corollary 8.28]{Rag}  $\Gamma_i \cap \sqrt{G}$ is a lattice and $\Gamma_i$ surjects onto a lattice in $\mathrm{PSL}_2(\mathbb{K})$. There exists a finite-index subgroup $\Gamma'_i$ such that  $p(\Gamma'_i)$ is torsion-free, and $M'_i=X/\Gamma'_i$ is a circle bundle over a cusped hyperbolic $3$-manifold if $X=\mathbb{H}^3 \times \mathbb{E}^1$, or a torus bundle over a cusped hyperbolic surface in the other cases. In all cases the induced short exact sequence
    $$1\rightarrow \Gamma'_i \cap \sqrt{G} \rightarrow \Gamma'_i \xrightarrow{p} p(\Gamma'_i)\rightarrow 1 $$ splits: if $X=\mathbb{H}^3 \times \mathbb{E}^1$, $\mathbb{H}^2 \times \mathbb{E}^2$ this holds true because $G$ is a direct product, while if $X=\widetilde{\mathbb{SL}} \times \matE^1$ one uses the fact that $p(\Gamma'_i)$ is a free group, and a splitting homomorphism $\rho \colon p(\Gamma_i')\rightarrow \Gamma'_i$ can be (freely) specified by the choice an element in the preimage of each standard generator. In all cases, the left factor $\Gamma'_i \cap \sqrt{G}$ is contained in the centre, therefore it commutes with the image of the splitting and $\Gamma'_i$ decomposes as a direct product $\Gamma'_i= (\Gamma'_i\cap \sqrt{G}) \times p(\Gamma'_i)$.

    Let now $g \in \Gamma$ correspond to a non-trivial element in $\Gamma'_i\cap \sqrt{G}$. Clearly $\Gamma_i'$ is a subgroup of the centraliser $C_{\Gamma}(g)$ of $g$ in $\Gamma$. Since $\Gamma_i'$ surjects onto a lattice in $\PSL_2(\matK)$, the group $C_\Gamma(g)$ is not virtually abelian. Indeed, by Tits' alternative, a lattice in $\PSL_2(\matK)$ always contains a non-abelian free subgroup, and a virtually abelian group has no subgroup that surjects onto a non-abelian free group. 
 
    On the other hand, if $P$ is a maximal parabolic subgroup of $p(\Gamma_i')$ (i.e.\ a cusp subgroup of $p(\Gamma'_i)$), the group $H_P<\Gamma'_i$ generated by $P$ and $\Gamma'_i \cap \sqrt{G}$ is a cusp subgroup of $\Gamma'_i < C_{\Gamma}(g)$ isomorphic to $\mathbb{Z}^3$. We plan to obtain a contradiction by showing that $H_P$ has finite index in $C_{\Gamma}(g)$.
 
    By Theorem \ref{thm:IZ}--\eqref{it:centraliser}, $C_\Gamma(g)$ divides a slice $\Omega \cap \matP(V)$ of $\Omega$. In particular, $C_{\Gamma}(g)$ is the fundamental group of a closed aspherical manifold of dimension $d=\mathrm{dim}(V)-1$. We claim that $d=3$. Indeed,  $d \geq 3$ because $C_\Gamma(g)$ contains a cusp subgroup $H_P <\Gamma'_i$, which is isomorphic to $\matZ^3$, and so has cohomological dimension $d \geq 3$. (Recall that the cohomological dimension of a subgroup does not exceed the one of the group.) Moreover $d \neq 4$, otherwise $C_{\Gamma}(g)$ would divide $\Omega$ and its centre would contain $\langle g \rangle \cong \mathbb{Z}$, {contradicting Corollary \ref{cor:not-product}.}

    Since $d=3$, the Poincar\'e duality groups $H_P\cong \mathbb{Z}^3$ and $C_{\Gamma}(g)$ have the same cohomological dimension. Therefore $H_P$ has finite index in $C_{\Gamma}(g)$, and this contradicts the fact that $C_\Gamma(g)$ is not virtually abelian.
    
    We have shown that in case \eqref{it:HpxEq} all the pieces are real hyperbolic. We have now to exclude the other cases of Theorem \ref{thm:hillman}. Cases \eqref{it:H2C-F4} and \eqref{it:H2xH2irred} contradict Theorem \ref{thm:IZ}--\eqref{it:abelian} because the cusp groups are solvable but not virtually abelian, so let us assume to be in case \eqref{it:H2xH2red}.
    
    Each $\Gamma_i$ has a finite-index subgroup $\Gamma'_i$ isomorphic to $\pi_1(F_i) \times \pi_1(B_i)$, for a cusped hyperbolic surface $F_i$ and a closed hyperbolic surface $B_i$. For every non-trivial $g \in \Gamma$ in the free factor $\pi_1(F_i)$ of $\Gamma'_i$, the centraliser $C_\Gamma(g)$ divides a slice $\Omega \cap \matP(V_g)$ by Theorem \ref{thm:IZ}--\eqref{it:centraliser}. As before, the slice has dimension $d=3$: for $d \leq 3$ by Corollary \ref{cor:not-product} because $\Omega$ is indecomposable, and $d \geq 3$ because $C_\Gamma(g)$ has a subgroup of type $\matZ \times \pi_1(B_i)$ (where the $\matZ$ factor is generated by $g$), which has cohomological dimension 3. In particular, $C_\Gamma(g)$ is virtually of type $\matZ \times \pi_1(B_i)$. So, by Benoist and Vey's Theorem \ref{teo:Vey}, the slice $\Omega \cap \matP(V_g)$ is a cone $\{ p \} \ast \Omega_g$, where the two-dimensional slice $\Omega_g = \Omega \cap \matP(W_g)$ is divided by $\pi_1(B_i)$. Moreover, the lifted linear action of $\pi_1(B_i)$ on $W_g$ is irreducible and the one of $\langle g \rangle$ is by homothety.
        
    Now we claim that $W_g = W_{g'}$ for all $g, g' \in \pi_1(F_i) < \Gamma$. Indeed, $\pi_1(B_i)$ preserves both $W_g$ and $W_{g'}$, so it preserves $W_g \cap W_{g'}$. Since moreover the action is irreducible, either $W_g \cap W_{g'}$ is trivial or $W_g = W_{g'}$, but the former case is easily excluded by Grassmann's formula . So the 3-dimensional vector space $W := W_g$ does not depend on $g$ and is preserved by the action of $\pi_1(B_i)$.
        
    As moreover every $g$ in $\pi_1(F_i)$ acts on $W$ by homothety, $W$ is invariant by the action of the larger group $\pi_1(F_i \times B_i) \cong \Gamma'_i$. Since the cusp groups are suitably identified in $\Gamma$, they act in the same way, and so $W$ is preserved by the fundamental groups of all the pieces. Therefore $W$ is preserved by the lifted action of the whole $\Gamma$. In other words, the linear action of $\Gamma$ on $\matR^5$ is reducible, and this contradicts Benoist and Vey's Theorem \ref{teo:Vey} because $\Omega$ is indecomposable.
\end{proof}

The proof of Theorem \ref{thm:geom-4d} is complete, as it follows from Propositions \ref{prop:geometric} and \ref{prop:geom-dec}.

\begin{rem} \label{rem:low-dim-geom-dec} 
    The same proof of Theorem \ref{thm:geom-4d} (but of course less complicated) essentially holds also in lower dimensions. Note indeed that the aspherical, non-solvable, three-dimensional geometries are just $\matH^3$, $\matH^2 \times \matE^1$ and $\widetilde{\matS\matL}$ (while in two dimensions there is only $\matH^2$). A 3-manifold $M$ with one of the two latter geometries is virtually diffeomorphic to the total space of a circle bundle, so $\pi_1(M)$ virtually has infinite centre, and Theorem \ref{thm:IZ}--\eqref{it:abelian} and Corollary \ref{cor:not-product} apply again in the geometric case. For the case of a non-trivial geometric decomposition, since a cusped manifold with geometry $\matH^2 \times \matE^1$ or $\widetilde{\matS\matL}$ is virtually diffeomorphic to the product of a cusped hyperbolic surface with the circle, our arguments with Theorem \ref{thm:IZ}--\eqref{it:centraliser} apply as well. 
\end{rem}

\section{The construction} \label{sec:chi}

In this section we prove Theorem \ref{thm:construction}. It suffices to show the following:

\begin{thm} \label{thgm:onepiece}
    There exists a closed, orientable, convex projective $4$-manifold $M$ with $\chi(M) = 2$, admitting a non-trivial geometric decomposition with 
    a single piece $M_1$.
\end{thm}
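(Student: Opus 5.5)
The plan follows the strategy outlined in Section \ref{sec:proof-thm-3}. First, I fix the hyperbolic Coxeter simplex $S=\matH^4/\Gamma_S$ with a single ideal vertex, whose orbifold Euler characteristic is $1/960$. Following Choi--Lee--Marquis \cite{CLM1} and Vinberg's theory of linear reflection groups \cite{V3}, I deform the reflection representation of $\Gamma_S$ to get a compact Coxeter prism $P=\Omega/\Gamma_P$. Here the ideal vertex is replaced by a new mirror facet, orthogonal to the facets adjacent to the vertex. The Coxeter diagram of $P$ is that of $S$ plus one extra node. Its vertex links (including those on the new facet, which are the finite links of the cusp section) are spherical, so again $\chi^{\mathrm{orb}}(P)=1/960$. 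I would verify this with the standard alternating sum over faces of $1/|\text{stabiliser}|$.

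Next, I look for a torsion-free normal subgroup $\Gamma_1\lhd\Gamma_S$ of index $1920$. I would search among kernels of surjections $\Gamma_S\to Q$ onto finite groups of order $1920$ such that every finite special subgroup of $\Gamma_S$ injects into $Q$; the natural candidates are $(\matZ/2)^k\rtimes$ something. This gives a cusped hyperbolic manifold $M_1=\matH^4/\Gamma_1$ with $\chi(M_1)=2$, which I can take orientable by requiring that reflections map outside a prescribed index-two subgroup. The number of cusps equals $|Q|/|Q_v|$, where $Q_v$ is the image of the Euclidean cusp group, and each cusp is a flat $3$-manifold.

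I then replace $S$ with $P$. The same surjection defined on the common generators (all of $\Gamma_P$'s generators except the new reflection $r$) extends to $\Gamma_P\to Q$ by sending $r\mapsto 1$, since $r$ commutes with the generators it meets orthogonally. Its kernel $\overline\Gamma_1$ gives an orbifold $\overline M_1=\Omega/\overline\Gamma_1$. This orbifold is a compact manifold with mirrored totally geodesic boundary: the truncation of the cusps of $M_1$. I glue its boundary components in pairs. To obtain a convex projective structure, I pair components by elements $q\in Q$ of order two in the normaliser that swap them. Concretely, I pass to the subgroup of $\Gamma_P$ generated by $\overline\Gamma_1$ with $r$ replaced by the product $r\,\tilde q$, where $\tilde q$ lifts $q$. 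Equivalently, I take the kernel of a map $\Gamma_P\to Q'$ with $r\mapsto$ an element acting freely on the boundary components. This group is torsion-free, still divides $\Omega$ by Vinberg's theory, and has index $1920$. Hence $\chi(M)=1920\cdot\frac{1}{960}=2$.

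Finally, the boundary hypersurface $F$ gives the geometric decomposition with the single piece $M_1$. Orientability follows if the gluing maps reverse the induced boundary orientations; I would check this from parity conditions on $Q'$. The main obstacle is the explicit group theory: finding $Q$ with the injectivity property, and a compatible involution pairing the cusps freely. Simultaneously keeping torsion-freeness, orientability, and the single-ideal-vertex requirement is what makes the naive computer search fail. I would first try abelian or $2$-group quotients of $\Gamma_S$ and check the vertex stabilisers by hand.
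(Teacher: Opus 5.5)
Your outline follows the paper's strategy: the Choi--Lee--Marquis prism $P$, a torsion-free normal subgroup of $\Gamma_S$ of index $1920$, extension by $r\mapsto 1$, and pairing the mirrored boundary components of $\overline M_1$ by deck transformations. But the statement is a pure existence claim, so the proof \emph{is} the construction, and you leave it open; as you say yourself, the main obstacle is finding $Q$. Moreover, the search you propose cannot succeed. The diagram $D_S$ has only labels $2$ and $3$, and its $3$-edges connect all five nodes. Hence the $\gamma_i$ are pairwise conjugate and $\Gamma_S^{\mathrm{ab}}\cong\matZ/2$, whose kernel contains $\gamma_2\gamma_3$ of order $3$. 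Also, injectivity on the vertex groups forces $\mathrm{lcm}(192,48,120)=960$ to divide $|Q|$, which rules out $2$-groups. The missing ingredient is the explicit surjection $\phi\colon\Gamma_S\to W(D_5)$, $\gamma_1\mapsto R_{e_1-e_5}$, $\gamma_i\mapsto R_{\alpha_i}$ for $i\geq 2$. One must then check three things: $\phi$ is injective on $G_1\cong W(D_4)$, $G_3\cong W(A_1)\times W(A_3)$ and $G_4\cong G_5\cong W(A_4)$; $\ker\phi$ consists of even words, since all images have determinant $-1$; and $\phi(G_2)$ is the stabiliser of $e_2$. The last point means the ten cusps are labelled by $\pm e_1,\dots,\pm e_5$ and $W(D_5)$ permutes them through its matrix action.

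The gluing step is also incorrect as you formulate it, because it is not ``equivalent'' to taking a kernel. A homomorphism $\Gamma_P\to W(D_5)$ extending $\phi$ must send $r$ to an element of order at most two centralising $\phi(G_2)\cong W(D_4)$. This centraliser is $\{1,\mathrm{diag}(-1,1,-1,-1,-1)\}\subset\phi(G_2)$. The nontrivial choice has determinant $+1$ and glues every boundary component to itself, so it produces neither a pairing nor an orientable manifold; enlarging $Q'$ instead raises the index above $1920$. Nor can a single involution pair all ten components. An involutive signed permutation $\sigma$ of five coordinates with $\sigma e_i\neq e_i$ for every $i$ has an odd number of sign changes: each of the odd number of fixed indices carries a $-1$, and each transposition carries two equal signs. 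Hence $\sigma\notin W(D_5)$. This is exactly why the paper uses two different involutions of determinant $-1$: $C$, pairing $\pm e_1\leftrightarrow\pm e_2$, $e_3\leftrightarrow -e_3$, $e_4\leftrightarrow -e_4$, and $D$, pairing $e_5\leftrightarrow -e_5$. Consequently $M\to P$ is not regular and $\Gamma$ is not a kernel. What makes the gluing legitimate is that the gluing maps are deck transformations of $\overline M_1\to P$. The glued manifold therefore still orbifold-covers $P$ with degree $1920$, i.e.\ it equals $\Omega/\Gamma$ with $[\Gamma_P:\Gamma]=1920$, and it is orientable because the gluing matrices have determinant $-1$. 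Finally, taken literally, $\langle\overline\Gamma_1, r\tilde q\rangle$ contains $r$. You would first need to pass to the index-two subgroup of $\overline\Gamma_1$ of words with an even number of occurrences of $r$.
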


Indeed, the flat two-sided hypersurface $F \subset M$ of the decomposition is non-separating, so (any union of connected components of) $F$ gives cyclic covers of $M$ of arbitrary degree $m > 0$, and so convex projective manifolds with $\chi = 2m$, as desired. Each of them admits a geometric decomposition with $m$ pieces, all diffeomorphic to the cusped hyperbolic manifold $M_1$. 

The convex projective manifold $M$ will be built by finding a torsion-free subgroup of a projective reflection group that divides some domain. In other words, $M$ will cover a convex projective Coxeter orbifold.

Let $C \subset \matR^{n+1}$ be a closed, convex, polyhedral cone bounded by finitely-many linear hyperplanes, and $P \subset \matRP^n$ the projectivisation of $C$. We call \emph{(projective) reflection group} a discrete subgroup $\Gamma < \PGL_{n+1}(\matR)$ generated by reflections along the bounding hyperplanes of $P$, such that $P$ is a fundamental domain for the action on the union $\Omega$ of $\Gamma$-translates of $P$. By \emph{(projective) reflection}, we mean the projectivisation of a linear reflection along a co\-di\-men\-sion-one hyperplane. Abstractly, $\Gamma$ is a Coxeter group. If $\Omega$ is properly convex and $\Gamma$ divides $\Omega$, the convex projective orbifold $\Omega/\Gamma$ is stratum-preserving homeomorphic to $P$. We refer to \cite{CLM1,V3} for further details on Coxeter groups, linear or projective reflection groups, and convex projective Coxeter orbifolds.

Essentially, we will just need a couple of well-known facts from the theory of Coxeter groups: finite Coxeter groups are classified, and the finite subgroups of a Coxeter group are precisely the conjugates of the finite Coxeter groups that one reads off from the diagram thanks to the classification. Our convention for Coxeter diagrams is the standard one. Although the orbifold theory is, arguably, not strictly needed, we will adopt its language, as it is natural for our geometric arguments.

\subsection{A prism and a simplex}\label{subsec:prism}

\begin{figure}
    \centering
    \includegraphics{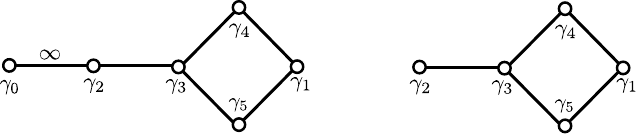}
    \caption{\footnotesize{On the left (resp. right), the Coxeter diagram $D_P$ (resp. $D_S$) of the Coxeter group $\Gamma_P$ (resp. $\Gamma_S$) of the prism $P = \Omega/\Gamma_P$ (resp. the simplex $S = \matH^4/\Gamma_S$). 
    }}
   \label{fig:small-simplex-truncated}
\end{figure}

Let us consider a four-dimensional simplex with facets $F_1, \ldots, F_5$ and vertices $v_1, \ldots, v_5$, where $v_i$ is opposite to $F_i$. Let $S$ and $P$ be obtained from the simplex as follows: $S$ by removing $v_2$, and $P$ by truncating $v_2$ with a new facet $F_0$.

Let now $\Gamma_P$ be the Coxeter group with diagram $D_P$
in Figure \ref{fig:small-simplex-truncated}--left, and $\Gamma_S < \Gamma_P$ the one with diagram $D_S$ in Figure \ref{fig:small-simplex-truncated}--right. As the reader may easily check via the classification of finite Coxeter groups, the nerve posets $\{ \{ i_1, \ldots, i_k \} : |\langle \gamma_{i_1}, \ldots, \gamma_{i_k} \rangle| \neq \infty \}$ of $\Gamma_S$ and $\Gamma_P$, ordered by $\supset$, are isomorphic to the face posets of $S$ and $P$, respectively. Therefore, $S$ and $P$ inherit two orbifold structures as follows. 

Set $G_F = \langle \gamma_{i_1}, \ldots, \gamma_{i_k} \rangle$ for each proper face $F = F_{i_1} \cap \ldots \cap F_{i_k}$, and $G_P = G_S = \{ 1 \}$.  These finite groups act  in the standard way on $\matR^4$ as reflection groups in $\Or(4)$, and the two orbifolds are locally modelled on $\matR^4/G_F$, where $F$ varies among the faces of the polytope. The orbifold fundamental groups of $S$ and $P$ are isomorphic to $\Gamma_S$ and $\Gamma_P$, respectively. Their orbifold Euler characteristic is
$$\chi(P) = 
\chi(S) = \sum_F \frac{(-1)}{|G_F|}^{\dim(F)} = \frac{1}{960},$$
where $F$ runs over the faces of the polytope ($P$ or $S$), including the polytope itself.

It is well known that $S$ is realised as a finite-volume hyperbolic Coxeter simplex $S = \matH^4/\Gamma_S$, where $\Gamma_S$ acts by isometry as a reflection group. Its unique ideal vertex, $v_2$, corresponds to the unique maximal affine subdiagram of $D_S$, spanned by $\gamma_1, \gamma_3, \gamma_4, \gamma_5$. The compact orbifold $P$ admits a convex projective structure as well \cite[Theorem C]{CLM1}:

\begin{thm}[Choi--Lee--Marquis \cite{CLM1}] \label{thm:CLM}
    The Coxeter group $\Gamma_P$ embeds in $\PGL_5(\matR)$ as a reflection group that divides a properly convex domain $\Omega \subset \matRP^4$ so that $P \cong \Omega/\Gamma_P$.
\end{thm}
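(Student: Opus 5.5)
The plan is to realise $\Gamma_P$ through Vinberg's theory of linear reflection groups \cite{V3}. The steps are: write down a suitable Cartan matrix for the six facets $F_0,F_1,\dots,F_5$ of the prism, check Vinberg's conditions, and then read off proper convexity and cocompactness from the combinatorics of $P$ already established above.

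\emph{Step 1: a starting point from the simplex.} For each $i=1,\dots,5$, choose a linear form $\alpha_i$ on $\matR^5$ and a vector $b_i$ with $\alpha_i(b_i)=2$. Set $a_{ij}=\alpha_i(b_j)$. The Cartan matrix $A=(a_{ij})$ must satisfy $a_{ij}\le 0$ for $i\neq j$, $a_{ij}=0\iff a_{ji}=0$, and $a_{ij}a_{ji}=4\cos^2(\pi/m_{ij})$ whenever $m_{ij}<\infty$. The hyperbolic simplex $S$ provides such a matrix, namely twice its Gram matrix. It has signature $(4,1)$, and the subdiagram on $\{1,3,4,5\}$ is affine, which reflects that $v_2$ is ideal.

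\emph{Step 2: deform and truncate.} I would deform the Cartan matrix of $S$ within the space of matrices with the same products $a_{ij}a_{ji}$ but non-symmetric entries, for instance by rescaling a cyclic product of entries. For $\Gamma_S$ the diagram has cycles, so such deformations are not all conjugations. The aim is that, after deformation, the four hyperplanes $\ker\alpha_1,\ker\alpha_3,\ker\alpha_4,\ker\alpha_5$ no longer meet at a point of $\partial\Omega$ but at a point $v_2$ outside the closure of the Tits--Vinberg domain, so that $v_2$ becomes ``hyperideal''. Equivalently, the principal $4\times4$ submatrix on $\{1,3,4,5\}$ should become of negative type, with eigenvalue sign $(3,1)$ rather than affine.

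Next I would add a sixth reflection $(\alpha_0,b_0)$ orthogonal to $F_1,F_3,F_4,F_5$. Concretely, take $b_0=v_2$, so that $\alpha_i(b_0)=0$ for $i\in\{1,3,4,5\}$, and let $\alpha_0$ vanish on the span of $b_1,b_3,b_4,b_5$. This is possible because $\{1,3,4,5\}$ now has a non-degenerate negative-type block. The remaining entries $a_{02}$ and $a_{20}$ should then be normalised so that their product equals $4\cos^2(\pi/m_{02})$ for the label shown in $D_P$ (or is $\ge 4$ if the label is $\infty$). This is exactly the ``orthogonal truncation'' of Choi--Lee--Marquis \cite{CLM1}.

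\emph{Step 3: apply Vinberg's theorem.} With the $6\times6$ Cartan matrix of rank $5$ in hand, Vinberg's theorem says the following. If the cone $C=\bigcap\{\alpha_i\le0\}$ has the combinatorics of $P$, and every proper face $F$ has finite group $G_F$ (true by the nerve/face-poset identification stated above), then the generated group $\Gamma_P$ is discrete and $C$ is a fundamental chamber. It further gives that the union $\Omega$ of translates of $P$ is convex, and that $\Omega$ is open with $P$ compact inside it. Since $A$ is irreducible and of negative type (it is not affine, and $\Gamma_P$ is infinite and not virtually abelian), $\Omega$ is properly convex. Since $P$ is compact, $\Gamma_P$ divides $\Omega$ and $\Omega/\Gamma_P\cong P$.

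\emph{Main obstacle.} The hard step is Step 2. One must show that the deformation genuinely exists, that is, that the vertex link can be pushed from affine to negative type while keeping all other vertices spherical. One must also check that the truncating hyperplane $\ker\alpha_0$ cuts off exactly $v_2$ and meets no other face, so that the cone has the face lattice of the prism. I would first try an explicit one-parameter family of Cartan matrices for $D_S$ and compute the determinant of the $\{1,3,4,5\}$ block as a function of the parameter. A sign change there would produce the hyperideal vertex, and the remaining combinatorial checks are then finitely many sign verifications.
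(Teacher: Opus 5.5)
Your strategy is the right one. The paper gives no argument of its own here; it quotes \cite[Theorem C]{CLM1}, and what you describe is exactly the Vinberg-theoretic deformation plus orthogonal truncation behind that result. As written, however, the proposal stops where the content of the theorem lies, and one step is misconceived.

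The part you call hard is in fact immediate. After diagonal conjugation you may take $a_{14}=-\mu$, $a_{41}=-\mu^{-1}$ and all other non-zero off-diagonal entries equal to $-1$, so the cyclic product $\mu>0$ around the square $1$--$4$--$3$--$5$ is the only parameter. The links of $v_1,v_3,v_4,v_5$ are forests, so their blocks stay of positive type. The block $A'$ on $I=\{1,3,4,5\}$ has $\det A'=2-\mu-\mu^{-1}<0$ for every $\mu\neq1$, and $\det A_\mu=-2(\mu+\mu^{-1})\neq0$. Two inaccuracies in your description: for $\mu\neq1$ the block $A'$ is not symmetrisable (it has a pair of non-real eigenvalues), so ``eigenvalue sign $(3,1)$'' should read ``negative type''. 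Also, $v_2$ does not leave the closure of the Tits--Vinberg domain of the deformed simplex group, since it is a vertex of the closed chamber. What actually changes is that $v_2\notin W:=\langle b_1,b_3,b_4,b_5\rangle$.

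The misconception is the ``normalisation'' of $a_{02}a_{20}$. Commutation with $\sigma_1,\sigma_3,\sigma_4,\sigma_5$ forces $b_0\in\matR v_2$ and $\ker\alpha_0=W$. The only remaining freedom, $b_0\mapsto sb_0$, $\alpha_0\mapsto s^{-1}\alpha_0$, leaves $a_{02}a_{20}$ unchanged, so nothing can be normalised and everything has to be checked. Since $F_0\cap F_2=\emptyset$ in the prism, $m_{02}=\infty$. What you need is $a_{02}<0$, together with $\alpha_0(v_k)<0$ for $k\in I$, i.e.\ $\ker\alpha_0$ cuts off exactly $v_2$, which is what gives the prism's face lattice.

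Both follow from one computation. Take the $v_k$ in the cone and $\alpha_0(v_2)=2$, and decompose $x=\sum_{j\in I}c_jb_j+\lambda v_2$. Then $c=A'^{-1}(\alpha_j(x))_{j\in I}$ and $\lambda=(\alpha_2(x)+c_3)/\alpha_2(v_2)$. Hence $\alpha_0(v_k)$ has the sign of $(A'^{-1})_{3k}$, and $a_{02}a_{20}=2\bigl(2-(A'^{-1})_{33}\bigr)=2\det A_\mu/\det A'$. The row of $\mathrm{adj}(A')$ indexed by $3$ is $(2+2\mu^{-1},\,4,\,3+\mu,\,3+\mu^{-1})$, all positive, while $\det A'<0$. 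Therefore:
$\alpha_0(v_k)<0$ for $k\in I$;
$a_{02}<0$, since $a_{20}=\alpha_2(v_2)<0$;
$a_{02}a_{20}=4(\mu+\mu^{-1})/(\mu+\mu^{-1}-2)>4$.

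With this in place your Step 3 goes through as stated, for every $\mu\neq1$. All eight vertices of the prism have finite groups, and $A_P$ is irreducible of negative type because it contains $A'$.
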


The truncation facet $F_0$ of $P$ is ``orthogonal'' to the adjacent facets, in the sense that the projective reflection associated to it commutes with the ones of the adjacent facets $F_1$, $F_3$, $F_4$ and $F_5$. 

The convex projective orbifold $P = \Omega/\Gamma_P$ admits an orbifold geometric decomposition in the following sense. The complement $P \sm F_0$ of the truncation facet admits a complete, finite-volume, hyperbolic orbifold structure: the one of $S = \matH^4/\Gamma_S$. The deformation space of convex projective structures on $P$ is a line, and every structure is a deformation of the hyperbolic structure of $S$ \cite[Theorems A and 3.1]{CLM1}.

\subsection{Summary of the proof}

In order to prove Theorem \ref{thgm:onepiece}, it thus suffices to prove the following algebraic fact:

\begin{prop} \label{prop:subgroup}
    There exists a torsion-free subgroup $\Gamma < \Gamma_P$ of index $1920$, whose elements have even word length in the generators $\gamma_0, \ldots, \gamma_5$, and such that the subgroup $\Gamma_1 = \Gamma_S \cap \Gamma$ has index $1920$ in $\Gamma_S$.
\end{prop}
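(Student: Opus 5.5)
The strategy follows the outline in the introduction. We first build a normal torsion-free subgroup of the hyperbolic simplex group $\Gamma_S$, then transport it to the prism group $\Gamma_P$ through the finite quotient it defines, and finally correct the resulting orbifold with mirror boundary by a gluing.

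\emph{Step 1: a normal subgroup of $\Gamma_S$.} We look for a surjection $\phi_S\colon \Gamma_S \to Q$ onto a finite group $Q$ with $|Q| = 1920$. It should be injective on every finite special subgroup $G_F$, for $F$ a non-ideal face of $S$, and it should send every generator $\gamma_1,\dots,\gamma_5$ outside the index-two subgroup $Q^+$ defined by an orientation character $Q\to\{\pm1\}$. Since $\chi(S)=1/960$ and the largest vertex stabilisers are finite Coxeter groups of order dividing $1920$, a natural candidate is the finite Coxeter group $W(D_5)$, of order $2^4\cdot 5! = 1920$, or another group of that order. We would find it by computer, with a low-index normal subgroups search in GAP/Magma on the presentation of $\Gamma_S$. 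Then $\Gamma_1 = \ker\phi_S$ is torsion-free, because every finite subgroup of $\Gamma_S$ is conjugate into some $G_F$. It has index $1920$, and the parity condition makes $\Gamma_1$ orientation preserving.

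\emph{Step 2: extension to $\Gamma_P$.} Define $\phi_P\colon\Gamma_P\to Q$ by $\phi_P|_{\Gamma_S}=\phi_S$ and $\phi_P(\gamma_0)=\sigma$, where $\sigma\in Q$ is an involution commuting with $\phi_S(\gamma_1),\phi_S(\gamma_3),\phi_S(\gamma_4),\phi_S(\gamma_5)$. These are the Coxeter relations at the truncation facet $F_0$. Let $\overline\Gamma_1=\ker\phi_P$. Because $\phi_P$ is surjective with $\phi_P|_{\Gamma_S}=\phi_S$, we get $\overline\Gamma_1\cap\Gamma_S=\Gamma_1$ and $[\Gamma_P:\overline\Gamma_1]=1920$. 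However, $\overline\Gamma_1$ contains torsion coming from the reflection in $F_0$. The orbifold $\Omega/\overline\Gamma_1$ is the truncated manifold $\overline M_1$ with mirrored boundary, whose boundary components correspond to cosets of the cusp group $\langle\gamma_1,\gamma_3,\gamma_4,\gamma_5\rangle$.

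\emph{Step 3: the gluing.} To kill the boundary reflections, we replace the requirement $\phi_P(\gamma_0)=\sigma\in Q$ by a map into a slightly larger target. Here we use the regular cover $\overline M_1\to P$ and its deck group $Q$: choose $\tau\in Q$ acting on the set of boundary components (the cosets of the cusp image $C=\phi_S\langle\gamma_1,\gamma_3,\gamma_4,\gamma_5\rangle$) as a fixed-point-free involution. Then define $\Gamma$ as the stabiliser of a point under the action of $\Gamma_P$ on the $1920$ elements of $Q$, with $\gamma_i$ ($i\ne0$) acting by right multiplication via $\phi_S$ and $\gamma_0$ acting by left multiplication by $\tau$. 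This is well defined because left and right multiplications commute, which verifies the Coxeter relations with $\gamma_0$. The subgroup has index $1920$ and restricts to $\Gamma_1$ on $\Gamma_S$, since $\Gamma_S$ acts regularly. Geometrically, this pairs the boundary components of $\overline M_1$ via the automorphism $\tau$.

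\emph{Verification, the main obstacle.} The remaining work is to check torsion-freeness and orientability. A torsion element of $\Gamma$ is conjugate into some $G_F$, so we need every finite special subgroup $G_F$ containing $\gamma_0$ to act freely on $Q$. For $\gamma_0$ alone this means $\tau q\ne q\,\phi(\cdot)$ relations fail. Concretely, we need $q^{-1}\tau q\,c\ne 1$ for all $c$ in the relevant finite subgroups of $C$, i.e.\ $\tau$ is not conjugate into $C$ in the required twisted sense. For evenness, $\tau$ should be odd, so that all generators act by odd permutations in a compatible way, and $\Gamma$ should lie in the even-length subgroup. Choosing $\tau$ (for instance central or in a suitable conjugacy class of $Q$) to satisfy these freeness conditions simultaneously for all faces of $P$ meeting $F_0$ is the delicate point. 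We would settle it by a direct finite computation in $Q$, testing candidate involutions against the finitely many special subgroups.
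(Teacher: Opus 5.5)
There is a genuine gap: nothing is actually constructed. Both the surjection $\phi_S$ and the involution $\tau$ are left to a computer search, so existence is never established. Step 1 is exactly where the paper does its work. It takes $Q=W(D_5)$ with $\phi(\gamma_1)=R_{e_1-e_5}$ and $\phi(\gamma_i)=R_{\alpha_i}$ for $i=2,\dots,5$, and then checks:
the Coxeter relations, via inner products of roots;
surjectivity, via $R_{\alpha_1}=AR_{\alpha_2}A^{-1}$ with $A=R_{\beta}R_{\alpha_4}R_{\alpha_3}$;
evenness, via the determinant;
torsion-freeness, via injectivity on the vertex groups $G_1\cong W(D_4)$, $G_3$, $G_4$, $G_5$.
(For evenness, argue with the character $\det\colon Q\to\{\pm1\}$: once $\det\tau=-1$, one has $\det(\gamma\cdot q)=(-1)^{\ell(\gamma)}\det(q)$. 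As permutations of the set $Q$, right multiplications by reflections are even, not odd.)

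Moreover, the criterion you impose in Step 3 cannot be met by your natural candidate. With the above $\phi$, the boundary components are the cosets of $\phi(G_2)=\{A\in W(D_5) : Ae_2=e_2\}$, labelled by $\pm e_1,\dots,\pm e_5$. No involution of $W(D_5)$ permutes these ten labels without fixed points. On each $2$-cycle of an involutive signed permutation the two signs agree. So fixing no $\pm e_i$ forces the sign $-1$ on every fixed coordinate, and there is an odd number of these. That gives an odd number of sign changes, which $W(D_5)$ forbids. A central $\tau$ is impossible as well, since the centre of $W(D_5)$ is trivial.

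This is why the paper pairs the components with two different orientation-reversing involutions: its matrix $C$ on $\pm e_1,\dots,\pm e_4$, and $D$ on $\pm e_5$. That gluing is not left multiplication by a single element.

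Your permutation model does survive if you impose only the condition from your last paragraph: $\det\tau=-1$, and no conjugate of $\tau$ lies in $\phi\langle\gamma_i,\gamma_j,\gamma_k\rangle$ for $\{i,j,k\}\subset\{1,3,4,5\}$. The paper's $C$ satisfies this. Each of these four copies of $\mathfrak{S}_4$ either fixes two basis vectors pointwise or never sends a basis vector to its negative. Every conjugate of $C$, by contrast, fixes exactly one basis vector and negates two. However, $\tau=C$ folds the components labelled $\pm e_5$ onto themselves, so $p^{-1}(F_0)$ acquires one-sided components. That still proves the proposition as stated. It loses, though, the two-sided geometric decomposition that the paper derives from its construction.
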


Indeed, the closed convex projective manifold $M = \Omega/\Gamma$ has $\chi(M) = 1920 \cdot \chi(P) = 2$. It is orientable, because $\Gamma_P$ acts on $\Omega$ as a reflection group, reflections in $\Aut(\Omega)$ are orientation reversing, and the elements of $\Gamma \cong \pi_1(M)$ have even word length. Moreover, we can pull back the orbifold geometric decomposition of $P$ to a geometric decomposition of $M$ via the induced orbifold covering $p \colon M \to P$. Indeed, as $P \sm F_0 \cong S$, each component of the complement $M \sm F$ of $F = p^{-1}(F_0)$ is diffeomorphic to the cusped hyperbolic manifold $M_1 = \matH^4/\Gamma_1$ that covers $S = \matH^4/\Gamma_S$. But the orbifold coverings $p$ and $p|_{M \sm F}$ have the same degree, 1920, so $M \sm F \cong M_1$ is connected and Theorem \ref{thgm:onepiece} follows. 

We found it difficult to prove Proposition \ref{prop:subgroup} only via computational methods. After weeks of computation with opportune computer software, we were not even able to know whether or not there exists a torsion-free subgroup of $\Gamma_P$ of minimal index 960 (and so a non-orientable manifold with $\chi = 1$). Note that the complexity of the problem increases with the index. We will rather proceed via a mixture of algebraic and geometric arguments.

The presence of the hyperbolic simplex $S$, a priori not needed for Theorem \ref{thm:construction}, is essential for our proof. Indeed, we first find the subgroup $\Gamma_1 < \Gamma_S$ as the kernel of a quite natural homomorphism of $\Gamma_S$ onto the order-1920 Coxeter group of type $D_5$ (see Figure \ref{fig:D5_diagram}). In particular, the latter group acts on $M_1 = \matH^4/\Gamma_1$ by isometry, with quotient map $M_1 \to S$ a regular orbifold cover. The desired properties of $\Gamma_1$ are conveniently shown by means of the standard representation of the $D_5$-group as a linear reflection group, recalled in Section \ref{sec:weyl}.

Then, to find $\Gamma \supset \Gamma_1$, we exploit the geometry of the construction as follows. By Theorem \ref{thm:CLM}, $\Gamma_1$ (seen as a subgroup of $\Gamma_P$) acts on $\Omega$ as the fundamental group of a projective manifold (tessellated by copies of $P$) with flat geodesic boundary (tessellated by copies of $F_0$), corresponding to the cusps of $M_1$. Since the cover $M_1 \to S$ is regular, the boundary components are pairwise isomorphic. We thus look at the cusps of $M_1$, and find a way to glue the corresponding boundary components in pairs with orientation-reversing maps. In general, gluing with projective isomorphisms is not enough to obtain a genuine convex projective structure on the closed manifold. Our gluings are admissible in the latter sense because we use automorphisms of the covering as gluing maps.

\subsection{The Weyl group of the $D_5$ root system} \label{sec:weyl}
We find convenient to represent the finite Coxeter group of type $D_5$ as a linear reflection group $W(D_5) < \Or(5)$ in the standard way, which we recall here.

We equip $\mathbb{R}^n$ with the standard scalar product. Given a root system $\calR \subset \mathbb{R}^n$, we denote by $W(\calR)$ its Weyl group, i.e. 
$$W(\calR)=\langle R_v \, | \, v \in \calR \rangle,$$
where $R_v$ denotes the reflection in the orthogonal complement to a vector $v \in \calR$. 

Denote by $e_1, \ldots, e_5$ the standard basis of $\mathbb{R}^5$. The \emph{$D_5$ root system} is the set $D_5$ consisting of the $40$ vectors of $\mathbb{R}^5$ of the form
$
\pm e_i \pm e_j
$. A set $\Delta=\{\alpha_1, \alpha_2,\alpha_3,\alpha_4,\alpha_5\}$ of simple roots is obtained by choosing
\begin{equation}\label{eq:simpleroots}
    \alpha_1=e_1-e_2,\quad \alpha_2=e_2-e_3,\quad \alpha_3=e_3-e_4,\quad \alpha_4=e_4-e_5,\quad \alpha_5=e_4+e_5. 
\end{equation}

The \emph{Weyl group $W(D_5)$} is the group generated by the reflections $R_{\alpha_1}, \ldots, R_{\alpha_5}$. Abstractly, it is the finite Coxeter group with diagram in Figure \ref{fig:D5_diagram}. 

\begin{figure}[h]
    \centering
    \includegraphics[width=5.4cm]{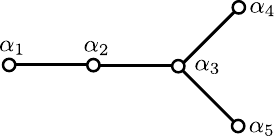}
    \caption{\footnotesize{The Coxeter diagram of $W(D_5)$, with the vertices labelled by the corresponding simple roots.}}
   \label{fig:D5_diagram}
\end{figure}

We now explicitly describe the matrices in this group. Given $v = e_i - e_j \in D_5$, the reflection $R_v$ permutes $e_i$ and $e_j$ while fixing the other vectors of the standard basis. The group generated by reflections of this form is isomorphic to the symmetric group $\mathfrak{S}_5$, represented by permutation matrices. Similarly, given $w = e_i+e_j$, the reflection $R_w$ permutes $e_i$ and $-e_j$ while fixing the other vectors of the standard basis. Then one easily checks that the composition $R_v R_w$ maps $e_i$ and $e_j$ to their opposites while fixing the other vectors of the standard basis. The group generated by these order-two rotations is isomorphic to $(\mathbb{Z}/2\mathbb{Z})^4$, and is represented by unit-determinant diagonal matrices with $\pm 1$ entries. Moreover, we obtain a semidirect product decomposition
$$W(D_5)\cong (\mathbb{Z}/2\mathbb{Z})^4 \rtimes \mathfrak{S}_5,$$
where the action of $\mathfrak{S}_5$ on $(\mathbb{Z}/2\mathbb{Z})^4$ induces the permutation of the diagonal entries. The order of $W(D_5)$ is then the product $2^4 \cdot 5!=1920$ of the orders of the two factors.

\subsection{The cusped hyperbolic manifold} 
We begin by constructing the subgroup $\Gamma_1 < \Gamma_S$, providing a cusped hyperbolic manifold $M_1=\matH^4/\Gamma_1$ as explained in Section \ref{subsec:prism}. The subgroup $\Gamma_1$ will be constructed as the kernel of a homomorphism $\phi$ from the Coxeter group $\Gamma_S$ onto the Weyl group $W(D_5)$. 
{For this purpose, recall that $\Gamma_S$ is generated by $\gamma_1,\ldots,\gamma_5$, and set $$\beta := e_1 - e_5 \in D_5.$$} We will prove the following statement:

\begin{prop}\label{prop:phi and its kernel}
    The assignments
    $$\phi(\gamma_1) = R_\beta, \quad \phi(\gamma_2)=R_{\alpha_2}, \quad \phi(\gamma_3)=R_{\alpha_3}, \quad \phi(\gamma_4)=R_{\alpha_4}, \quad \phi(\gamma_5)=R_{\alpha_5}$$
    define a group homomorphism $\phi\colon \Gamma_S \rightarrow W(D_5)$ such that $\Gamma_1:=\ker(\phi)$ is a torsion-free subgroup of index $1920$ whose elements have even word length in the generators $\gamma_1,\ldots,\gamma_5$.
\end{prop}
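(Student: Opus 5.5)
The plan is to verify, in order: that $\phi$ is well defined, that it is surjective, that its kernel has even word length, and that its kernel is torsion free. Throughout I use the standard fact that two reflections $R_u,R_v$ with $|u|=|v|$ generate a dihedral group whose rotation $R_uR_v$ has order $2$, $3$, $4$ when the angle between the lines $\matR u$, $\matR v$ is $\pi/2$, $\pi/3$, $\pi/4$. So the order of each product can be read off from inner products of roots; for roots of $D_5$ it is $2$ if $u\cdot v=0$ and $3$ if $u\cdot v=\pm1$.

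\emph{Well-definedness.} This is a finite check of the Coxeter relations: for every pair $i\neq j$, the order of $\phi(\gamma_i)\phi(\gamma_j)$ must divide the label $m_{ij}$ of $D_S$. For example, $\beta\cdot\alpha_4=1$ and $\beta\cdot\alpha_5=-1$, while $\beta\perp\alpha_2,\alpha_3$. Likewise $\alpha_3$ makes angle $\pi/3$ with $\alpha_4$ and $\alpha_5$, and $\alpha_4\perp\alpha_5$. Hence the four roots $\beta,\alpha_4,\alpha_3,\alpha_5$ form a $4$-cycle of order-$3$ products. This is consistent with $\gamma_1,\gamma_3,\gamma_4,\gamma_5$ spanning the affine subdiagram $\widetilde A_3$ at the ideal vertex $v_2$, and with $\gamma_2$ being attached only to $\gamma_3$. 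I would compare each such inner product with the corresponding edge label of $D_S$.

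\emph{Surjectivity, index and parity.} The reflections $R_\beta,R_{\alpha_2},R_{\alpha_3},R_{\alpha_4}$ are the transpositions $(15),(23),(34),(45)$. These form a connected graph on $\{1,\dots,5\}$, so they generate $\mathfrak S_5$. Adding $R_{\alpha_5}$ gives $R_{\alpha_4}R_{\alpha_5}=\mathrm{diag}(1,1,1,-1,-1)$, whose $\mathfrak S_5$-conjugates generate the $(\matZ/2\matZ)^4$ factor. Hence $\phi$ is onto $W(D_5)$, and $\ker\phi$ has index $1920$. Each $\phi(\gamma_i)$ is a reflection, so $\det\circ\phi(w)=(-1)^{\ell(w)}$ for any word $w$. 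Elements of the kernel have determinant $1$, hence even word length.

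\emph{Torsion-freeness, the main step.} Finite subgroups of the Coxeter group $\Gamma_S$ are conjugate into the spherical special subgroups, and the maximal ones are the vertex groups $G_{v_i}$ for the finite vertices $i=1,3,4,5$. Each $G_{v_i}$ is generated by the four generators other than $\gamma_i$. Since $\ker\phi$ is normal, it is enough to show that $\phi$ is injective on each such $G_{v_i}$. The four corresponding roots are
\begin{align*}
&\{\alpha_2,\alpha_3,\alpha_4,\alpha_5\},\quad \{\beta,\alpha_2,\alpha_4,\alpha_5\},\\
&\{\beta,\alpha_2,\alpha_3,\alpha_5\},\quad \{\beta,\alpha_2,\alpha_3,\alpha_4\}.
\end{align*}
Each set is linearly independent; for instance, $\{e_1-e_5,\,e_2-e_3,\,e_4-e_5,\,e_4+e_5\}$ is clearly independent. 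Moreover, the pairwise angles of each set reproduce the spherical Coxeter subdiagram of $G_{v_i}$. These subdiagrams are trees, so after changing signs of roots I can make all pairwise inner products non-positive. The roots then form a simple system of a finite root system with that Coxeter diagram, and the group they generate is isomorphic to the abstract Coxeter group $G_{v_i}$, with the same order. So $\phi|_{G_{v_i}}$ is surjective onto a group of the same finite order, hence injective. The obstacle I expect is bookkeeping: confirming for each of the four vertices that the subdiagram read from the roots matches $D_S$ exactly, including any label $4$ edges if present in the figure; otherwise the order argument could fail.
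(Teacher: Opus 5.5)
Your proof is correct. Its skeleton matches the paper's: check the Coxeter relations, prove surjectivity, get parity from determinants, and get torsion-freeness from injectivity on the four finite vertex groups. The differences are in two sub-steps.

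For surjectivity, the paper shows $R_{\alpha_1}=AR_{\alpha_2}A^{-1}$ with $A=R_\beta R_{\alpha_4}R_{\alpha_3}$. You instead note that $R_\beta,R_{\alpha_2},R_{\alpha_3},R_{\alpha_4}$ are the transpositions $(15),(23),(34),(45)$ along a spanning path, hence generate $\mathfrak{S}_5$. You then recover the sign-change subgroup from the conjugates of $R_{\alpha_4}R_{\alpha_5}$. Both arguments are immediate.

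For torsion-freeness, the paper uses your kind of argument only for $G_1$, where it recognises a $D_4$ simple system. It handles $G_3$, $G_4$, $G_5$ by counting: the image order is divisible by $4$ or $24$, and $\mathfrak{S}_4$, $\mathfrak{S}_5$ have no normal subgroups of the remaining possible orders. For $G_3$ it adds a centre argument for the extra $A_1$ factor. You treat all four vertices uniformly: each quadruple of roots is linearly independent, and after sign changes its Gram matrix is twice the Coxeter form of the subdiagram. Faithfulness of the geometric (Tits) representation of a finite Coxeter group then gives $|\phi(G_{v_i})|=|G_{v_i}|$. Your route is more systematic and shorter but relies on citing that faithfulness theorem; the paper's is more hands-on but case-specific.

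Two minor points:
\begin{itemize}
\item $A_1\sqcup A_3$ is a forest rather than a tree, but the sign-flipping works just as well.
\item Your worry about label-$4$ edges is unnecessary. Every pair of generators lies in some vertex group of type $D_4$, $A_1\times A_3$, $A_4$ or $\widetilde A_3$, so all $m_{ij}\in\{2,3\}$. The well-definedness check then already forces the root angles to match the subdiagrams.
\end{itemize}
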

\begin{proof}
    We first check that the assignments above specify a group homomorphism, i.e. that all relations in the presentation of $\Gamma_S$ are mapped to the identity element in $W(D_5)$. Notice that, for any two linearly independent vectors $v,w \in D_5$, the order of $R_v R_w$ is determined by their scalar product: if $\langle v,w \rangle=0$ then $R_v$ and $R_w$ commute, while if $\langle v,w \rangle=\pm1$ then $R_v  R_w$ has order $3$. Since $\beta$ is orthogonal to $\alpha_2$ and $\alpha_3$, we see that $\phi(\gamma_1)$ commutes with $\phi(\gamma_2)$ and $\phi(\gamma_3)$. Moreover, since $\langle \beta,\alpha_4\rangle=1$ and $\langle\beta,\alpha_5\rangle=-1$, both $\phi(\gamma_1) \phi(\gamma_4)$ and $\phi(\gamma_1) \phi(\gamma_5)$ have order $3$. The fact that the other relations are satisfied follows directly from the fact that the the images of $\gamma_2,\dots, \gamma_5$ correspond to simple reflections in a basis of a $D_4$ root subsystem.

    We now claim that $\phi$ is surjective. It is sufficient to prove that $R_{\alpha_1}$ lies in the image of $\phi$, since the other generators of $W(D_5)$ are precisely the images of $\gamma_2, \ldots, \gamma_5$. For this we notice that $R_{\beta}  R_{\alpha_4}  R_{\alpha_3}  \alpha_2=-\alpha_1$. Thus, by setting $A = R_{\beta}  R_{\alpha_4}  R_{\alpha_3}$, we see that $R_{\alpha_1} = A R_{\alpha_2} A^{-1}$ belongs to the image of $\phi$. This also shows that {the normal subgroup $\Gamma_1 := \ker(\phi)$ of $\Gamma_S$ has index 1920: the order of $W(D_5)$.} 

    It is now easy to prove that all elements of $\Gamma_1$ are expressed as words of even length in the generators $\gamma_1, \ldots, \gamma_5$. The matrices $\phi(\gamma_1), \ldots, \phi(\gamma_5)$ represent indeed linear reflections along codimension-one hyperplanes, and so have determinant $-1$. Thus, an element $\gamma \in \Gamma_S$ has even word length if and only if $\phi(\gamma)\in W(D_5)$ has unit determinant.

    It only remains to prove that $\Gamma_1$ is torsion free. For this, it is sufficient to check that all the maximal finite subgroups of $\Gamma_S$ are mapped injectively into $D_5$. There are exactly $4$ conjugacy classes of maximal finite subgroups in $\Gamma_S$, one for each finite vertex of $S$. A list of representatives is given below, together with their generators, their realisation as the Weyl group of some root system, and their order. 

    Let $G_i$ denote the group associated to the vertex $v_i$ of $S$ opposite to the facet $F_i$. It is obtained by considering the subdiagram of $D_S$ spanned by the vertices different from $\gamma_i$. Notice that $G_2$ is the affine (and thus infinite) Coxeter group corresponding to the ideal vertex $v_2$ of $S$, so it does not belong to this list. 
\begin{align*} 
    G_1 & \cong W(D_4) \cong (\mathbb{Z}/2\mathbb{Z})^3 \times \mathfrak{S}_4, & | G_1 | = 192;\\
    G_3 &\cong W(A_1) \times W(A_3) \cong \mathbb{Z}/2\mathbb{Z} \times \mathfrak{S}_4, & | G_3 | = ~\, 48;\\
    G_4 & \cong W(A_4) \cong \mathfrak{S}_5, & | G_4 | = 120;\\
    G_5 & \cong W(A_4) \cong \mathfrak{S}_5, & | G_5 | = 120.
\end{align*}

    We are thus left with the task of showing that $\phi$ is injective on $G_i$ for each $i=1,3,4,5$. We begin by dealing with $G_1$. Notice how the set of vectors of $D_5$ whose first coordinate is equal to $0$ forms a copy of the $D_4$ root system in the vector space {$\{ x_1 = 0 \} \subset \matR^5$}. A set of simple roots for this root system is given by $\Delta'=\{\alpha_2, \alpha_3, \alpha_4, \alpha_5\}$ as in \eqref{eq:simpleroots}. Since the generators of $G_1$ are mapped by $\phi$ precisely in the set $\{R_\alpha \, |\, \alpha \in \Delta'\}$, we see that $\phi(G_1) < W(D_5)$ is isomorphic to $W(D_4)$.

    Now we prove that $\phi$ is injective on $G_3$. The group $\langle R_{\alpha_4}, R_{\alpha_5} \rangle$ is clearly isomorphic to $\mathbb{Z}/2\mathbb{Z} \times \mathbb{Z}/2\mathbb{Z}$. This implies that the order of the image of $\phi \colon \langle \gamma_4, \gamma_5 ,\gamma_1 \rangle \rightarrow \langle R_{\alpha_4}, R_{\alpha_5}, R_{\beta} \rangle$ is a multiple of $4$. Since $\langle \gamma_4,\gamma_5,\gamma_1\rangle \cong \mathfrak{S}_4$ has order $24$, its kernel is a normal subgroup of order 1,2,3 or 6. However, the only non-trivial normal subgroups of $\mathfrak{S}_4$ are the alternating subgroup (of order $12$) and the Klein group generated by ``double transpositions'' (of order $4$). Therefore the group $\langle R_{\alpha_4},R_{\alpha_5}, R_{\beta} \rangle=\phi(\langle \gamma_4, \gamma_5 ,\gamma_1\rangle)$ is isomorphic to $W(A_3)\cong \mathfrak{S_4}$. Moreover $\phi(\gamma_2)= R_{\alpha_2}$ commutes with the generators of this group. Since the centre of $\mathfrak{S}_4$ is trivial and $R_{\alpha_2}$ is non-trivial, it follows that $R_{\alpha_2}$ does not belong to $\phi(\langle \gamma_4, \gamma_5 ,\gamma_1\rangle)$, and therefore $\phi(G_3)\cong W(A_1)\times W(A_3)$.

    The fact that $\phi$ is injective on $G_4$ is checked similarly. Due to the injectivity on $G_1$, $\phi$ is injective on $\langle \gamma_2, \gamma_3, \gamma_5 \rangle\cong \mathfrak{S}_4$, which is a group of order $24$. This implies that the order of the image of $\phi \colon G_4 \rightarrow \langle R_{\beta},R_{\alpha_2}, R_{\alpha_3}, R_{\alpha_5} \rangle$ is a multiple of $24$. Since $120/24=5$, its kernel is a normal subgroup of $\mathfrak{S}_5$ of order $1$ or $5$. If this kernel were to be non-trivial, it would have to be a cyclic group generated by a $5$-cycle. However such a group is never a normal subgroup of $\mathfrak{S}_5$. It follows that the kernel is trivial, i.e. $\phi$ is injective on $G_4$. The very same argument applies to show the injectivity of $\phi$ on $G_5$.

    We have shown that $\Gamma_1$ is torsion free, and the the proof is compete.
\end{proof}

As a consequence of Proposition \ref{prop:phi and its kernel}, the quotient $M_1=\mathbb{H}^4/\Gamma_1$ is an orientable hyperbolic cusped $4$-manifold  with $\chi(M_1) = |W(D_5)| \cdot \chi(S) = 1920/960 = 2$.

\subsection{Parametrising the cusps}
We now prove that $M_1$ has exactly $10$ cusps. The automorphism of the regular orbifold cover $p \colon M_1 \rightarrow S$ associated to $\phi$ satisfies
$$\Aut(p) \cong \Gamma_S/\Gamma_1 \cong W(D_5).$$ (Recall that $\ker(\phi) = \Gamma_1$.) The cusps of $M_1$ correspond to the preimages of the ideal vertex $v_2$ of $S$, and are in one-to-one correspondence with the left cosets of the subgroup $\phi(G_2)<W(D_5)$, where $G_2 = \langle \gamma_3,\gamma_4,\gamma_5,\gamma_1 \rangle$.

We claim that $\phi(G_2)$ is isomorphic to $W(D_4)$. The argument is similar to the one used in the proof of the injectivity of $\phi$ on $G_1$. Indeed, the set of vectors of $D_5$ whose second coordinate is equal to $0$ forms a copy of the $D_4$ root system in the vector space $\{ x_2 = 0 \} \subset \matR^5$. A set of simple roots for this root system is given by $\Delta''=\{\delta, \alpha_3, \alpha_4, \alpha_5\}$ with $\delta=e_1-e_3$. The reflections $R_{\alpha_3}$, $R_{\alpha_4}$ and $R_{\alpha_5}$ are images of the generators of $G_2$, and thus belong to $\phi(G_2)$. Moreover, since $R_{\alpha_3}  R_{\alpha_4}  \beta = \delta$, by setting $B=R_{\alpha_3}  R_{\alpha_4}$ we see that $R_{\delta}=B  R_{\beta} B^{-1}$ belongs to $\phi(G_2)$, and this group is therefore isomorphic to $W(D_4)$.

From the above discussion, we see that 
$$\phi(G_2)=\{ A \in W(D_5) \, |\,  A e_2=e_2\}.$$
This provides us with a very convenient way to check whether or not two matrices in $W(D_5)$ belong to the same left coset for $\phi(G_2)$: given $A,B \in W(D_5)$, we have $$A  \phi(G_2)=B   \phi(G_2) \quad \Leftrightarrow \quad B^{-1}  A \in \phi(G_2) \quad \Leftrightarrow \quad B^{-1}  A e_2=e_2 \quad \Leftrightarrow \quad A  e_2 = B  e_2, $$
i.e. $A$ and $B$ belong to the same coset for $\phi(G_2)$ if and only if their second columns coincide. There are exactly $10$ cosets, since any column of a matrix of $W(D_5)$ corresponds to a vector of the form $\pm e_i$ for $i=1,\dots,5$ and the cusps of $M_1$ are naturally labelled by vectors of this form.

We now wish to understand how an automorphism of the orbifold covering $p \colon M_1 \rightarrow S$ acts on the cusps of $M_1$. The {action of $\Aut(p) \cong W(D_5)$} on the set of cusps of $M_1$ is induced by the action by left multiplication of $W(D_5)$ on the left cosets of $\phi(G_2)$. Now, if $A$ and $B$ belong to $W(D_5)$ and $B  e_2= \pm e_i$, we have that
$$(A  B)  e_2 = A  (B  e_2)= A   (\pm e_i).$$
Thus, the action on the cusps is determined by the action of $W(D_5)$ on $\{ \pm e_1, \ldots, \pm e_5 \}$.

\subsection{Pairing the cusps} 
We now build a convex projective manifold $\overline{M}_1$ with totally geodesic boundary by ``truncating the cusps'' of the hyperbolic manifold $M_1$. More precisely, we extend the homomorphism $\phi \colon \Gamma_S \rightarrow W(D_5)$ to $\overline{\phi} \colon \Gamma_P \rightarrow W(D_5)$ by mapping the extra generator $\gamma_0$ to the identity. The resulting convex projective orbifold $$\overline{M}_1=\Omega/\mathrm{ker(\overline{\phi})}$$ is in fact a ``manifold with mirror boundary'', that is an orbifold whose underlying space is a manifold with boundary and whose singular locus is the boundary. The latter consists of $10$ pairwise isomorphic totally geodesic components, corresponding to the cusps of $M_1$. Indeed, the non-trivial torsion elements in $\mathrm{ker}(\overline{\phi})$ are precisely the conjugates of $\gamma_0$. Each boundary component is isomorphic to the flat manifold corresponding to the kernel of $\phi|_{G_2}$.

We obtain a covering of convex projective orbifolds $\overline{p} \colon \overline{M}_1 \rightarrow P$ whose automorphism group is isomorphic to $W(D_5)$. In order to obtain a closed convex projective manifold $M$ which covers $P$, we will use some of these automorphisms to identify the boundary components of $\overline{M}_1$ in pairs. We shall choose some order-two elements of $\mathrm{Aut}(\overline{p})\cong W(D_5)$. In order for the resulting manifold to be orientable, we are forced to choose orientation-reversing elements, i.e. matrices of $W(D_5)$ with negative determinant. With this in mind, we proceed as follows.

\begin{itemize}
\item The matrix 
$C=\begin{pmatrix} 0 & 1 & 0 & 0 & 0\\
                   1 & 0 & 0 & 0 & 0\\
                   0 & 0 & -1 & 0 & 0\\
                   0 & 0 & 0 & -1 & 0\\
                   0 & 0 & 0 & 0 & 1
    \end{pmatrix}$ belongs to $W(D_5)$, has order $2$ and determinant $-1$. By looking at the images of vectors of the form $\pm e_i$, we see that the corresponding automorphism exchanges the boundary components of $\overline{M}_1$ as follows: 
                   $$\pm e_1 \leftrightarrow \pm e_2, \qquad \e_3 \leftrightarrow -e_3, \qquad e_4 \leftrightarrow -e_4, \qquad \pm e_5 \leftrightarrow \pm e_5.$$
\item The matrix 
$D=\begin{pmatrix} 0 & 1 & 0 & 0 & 0\\
                   1 & 0 & 0 & 0 & 0\\
                   0 & 0 & 1 & 0 & 0\\
                   0 & 0 & 0 & -1 & 0\\
                   0 & 0 & 0 & 0 & -1\end{pmatrix}$ belongs to $W(D_5)$, has order $2$ and determinant $-1$. The corresponding automorphism exchanges the boundary components of $\overline{M}_1$ as follows: 
                   $$\pm e_1 \leftrightarrow \pm e_2, \qquad \pm e_3 \leftrightarrow \pm e_3, \qquad e_4 \leftrightarrow -e_4, \qquad e_5 \leftrightarrow -e_5.$$
\end{itemize}

We use the matrices above to pair the boundary components of $\overline{M}_1$ via the corresponding automorphisms as follows. We use $C$ to pair the boundary components with labels $\pm e_1$ with those with labels $\pm e_2$, and those with label $e_i$ to those with label $-e_i$, for $i=3,4$. Notice that $C$ preserves the boundary components corresponding to $\pm e_5$. In order to pair these remaining two boundary components we use $D$, which indeed exchanges them. 

The resulting closed orientable convex projective manifold $M = \Omega/\Gamma$ covers the orbifold $P = \Omega/\Gamma_P$ with index 1920 by construction. The proof of Proposition \ref{prop:subgroup} (and so of Theorems \ref{thgm:onepiece} and \ref{thm:construction}) is complete.

\begin{rem}\label{rem:not-normal-cover}
    The covering $M\rightarrow P$ is not regular. Essentially, this is due to the fact that we are choosing two distinct elements 
    of $W(D_5) \cong \mathrm{Aut}(\overline{p})$ to pair the boundary components of $\overline{M}_1$. Even more, it can be quickly checked with a computer that there is no torsion-free normal subgroup of $\Gamma_P$ of index $\leq 1920$. 
\end{rem}

\appendix

\section{Rigidity and Superrigidity} \label{sec:rigidity} 

In Section \ref{sec:spazi-simmetrici} we make a heavy use of the classical results on the rigidity of lattices in semisimple Lie groups due to Mostow, Prasad, Margulis (and Corlette). We state them here in the form that we use. In what follows, a connected algebraic group $\mathbf{H}$ defined over a field $k\subset \mathbb{C}$ is \emph{adjoint} if the adjoint action of the complex Lie group $\mathbf{H}(\mathbb{C})$ on its complex Lie algebra is faithful.

\begin{thm}[Superrigidity]\label{teo:superrigidity}
    Let $G$ be a connected semisimple Lie group with trivial centre and no compact factors, $\Gamma<G$ an irreducible lattice, $\mathbf{H}$ a connected, adjoint, real algebraic group such that $\mathbf{H}(\matR)^{\circ}$ has no compact factors, and $\delta\colon \Gamma \rightarrow \mathbf{H}(\matR)$ a non-trivial group homomorphism with Zariski-dense image. 
    
    If either $G$ has real rank $\geq 2$ or $G$ is isomorphic to $\mathrm{PSp}(n,1)$ or the exceptional adjoint group $G_4^{-20}$, then
    $\phi$ extends to a continuous surjective homomorphism $\phi\colon G\rightarrow \mathbf{H}(\matR)^{\circ}$. If moreover $\phi(\Gamma)$ is discrete in $\mathbf{H}(\matR)^{\circ}$, then $\phi$ is an isomorphism.
\end{thm}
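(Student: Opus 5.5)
The plan is to reduce the statement to the classical superrigidity theorems rather than reprove them. We cite Margulis' superrigidity in the form of \cite[Theorem IX.5.8 and Theorem VII.5.15]{M} for $G$ of real rank $\geq 2$. For the rank-one groups $\mathrm{PSp}(n,1)$ and the exceptional adjoint group (i.e.\ $F_4^{-20}$) we cite the archimedean superrigidity of Corlette and its non-archimedean counterpart by Gromov--Schoen.

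The first step is to check the hypotheses of those theorems. Margulis requires the image $\delta(\Gamma)$ to be Zariski dense in an adjoint $\mathbf{H}$ and not relatively compact. The first is assumed. For the second, suppose the closure of $\delta(\Gamma)$ were compact. Its Zariski closure, which is $\mathbf{H}$, would then be the Zariski closure of a compact group. Hence $\mathbf{H}(\matR)^\circ$ would be compact, and nontrivial because $\delta$ is nontrivial and $\mathbf{H}$ is adjoint. This contradicts the assumption that $\mathbf{H}(\matR)^\circ$ has no compact factors. The cited theorems then give a continuous homomorphism $\phi\colon G\to \mathbf{H}(\matR)$ extending $\delta$. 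Adjointness of $\mathbf{H}$ and Zariski density are exactly what ensure that $\phi$ extends $\delta$ on all of $\Gamma$, not just on a finite-index subgroup. Since $G$ is connected, $\phi(G)\subset \mathbf{H}(\matR)^\circ$. Moreover $\phi(G)$ is the image of a semisimple group under a rational map of the underlying algebraic groups, so it is a closed subgroup. It is Zariski dense, so its Zariski closure is all of $\mathbf{H}$, which forces $\phi(G)=\mathbf{H}(\matR)^\circ$.

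For the last assertion, assume $\phi(\Gamma)$ is discrete. The kernel $\ker\phi$ is a closed normal subgroup of $G$. Its identity component is a product of simple factors of $G$. Suppose some factor $G_1$ lies in it, and write $G = G_1\times G_2$ with $G_2$ the complementary product. Irreducibility of $\Gamma$ says that its projection to $G_2$ is dense. Since $\phi$ factors through this projection, $\phi(\Gamma)$ is dense in $\phi(G)=\mathbf{H}(\matR)^\circ$. This contradicts discreteness, because $\mathbf{H}(\matR)^\circ$ is a nontrivial connected group. Hence $\ker\phi$ is discrete and normal in the connected group $G$, so it is central, and therefore trivial since $G$ has trivial centre. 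Thus $\phi$ is a continuous bijective homomorphism between connected Lie groups, and by the open mapping theorem for $\sigma$-compact Lie groups it is an isomorphism.

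The main obstacle is bookkeeping rather than mathematics. Different sources state superrigidity for slightly different targets: simply connected versus adjoint $\mathbf{H}$, and $\mathbf{H}(\matR)$ versus $\mathbf{H}(\matR)^\circ$. The delicate points are therefore matching our hypotheses (adjoint, trivial centre, no compact factors) to those of \cite{M} and Corlette, and justifying that the extension is defined on the whole of $\Gamma$ and surjects onto the identity component.
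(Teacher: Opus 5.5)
Your proposal is correct and takes the same route as the paper, which gives no argument beyond citing Margulis (real rank $\geq 2$) and Corlette (for $\mathrm{PSp}(n,1)$ and $F_4^{-20}$). Your additional checks are sound: non-relative compactness from the no-compact-factor hypothesis, extension to all of $\Gamma$ via adjointness, surjectivity onto $\mathbf{H}(\matR)^\circ$, and the irreducibility argument forcing $\ker\phi$ to be trivial when $\phi(\Gamma)$ is discrete. Two bookkeeping caveats: if the version of Margulis' theorem you invoke assumes a simple target, non-relative compactness must be checked for the projection of $\delta(\Gamma)$ to each simple factor of $\mathbf{H}$, which your argument gives verbatim; and Gromov--Schoen is not needed for real targets.
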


The case where $G$ has real rank $\geq 2$ is essentially one of the formulations of the celebrated Margulis' Superrigidity Theorem \cite[Theorem 6.16, Remark 6.17]{M}. Its extension to the case of the rank-one groups $\mathrm{PSp}(n,1)$ and $F_4^{-20}$ is a subsequent result by Corlette \cite{Cor}.

By combining the Superrigidity Theorem for lattices in semisimple Lie groups of rank $\geq 2$ with the Mostow--Prasad rigidity for lattices in rank-one simple Lie groups we obtain the following:
\begin{thm}[Margulis, Mostow, Prasad]\label{teo:Mostow} 
    Let $G_1$ and $G_2$ be connected, semisimple Lie groups with trivial centre, no compact factors and no factor isomorphic to $\mathrm{SO}^+_{2,1}(\matR)\cong \mathrm{PSL}_2(\matR)$, and $\Gamma_1<G_1$, $\Gamma_2<G_2$ two lattices. Then any isomorphism $\delta\colon \Gamma_1 \rightarrow \Gamma_2$ extends to a continuous isomorphism $\phi\colon G_1 \rightarrow G_2$.
\end{thm}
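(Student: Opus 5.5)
This is the combination of Margulis' superrigidity (Theorem \ref{teo:superrigidity}) with Mostow--Prasad rigidity. I would decompose both groups into simple factors and match the irreducible factors of the lattices.

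\emph{Step 1: reduction to irreducible lattices.} Write $G_1 = \prod_a F_a$ and $G_2=\prod_b F'_b$ as products of simple factors; these are trivial-centre, non-compact, and none is $\PSL_2(\matR)$. Up to finite index, each lattice decomposes as a product of irreducible lattices in subproducts of factors. The key observation is that the irreducible decomposition of $\Gamma_i$ is intrinsic to the abstract group. The natural route is via commensurators, or via maximal normal subgroups that commute with each other: any normal subgroup of an irreducible lattice $\Gamma'$ in a group with trivial centre is either finite (hence trivial, by Lemma \ref{lemma:finitecenter}) or has finite index by the normal subgroup theorem. In rank one one argues instead that normal subgroups of infinite index are Zariski dense, so their centralisers are trivial.

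Using Zariski density of $\delta(\Gamma'_1\times \dots)$ and Borel density, as in Corollary \ref{cor:reducible-implies-decomposable}, I would show that $\delta$ maps commuting normal factors to commuting normal factors. This yields a bijection between irreducible factors with $\delta(\Gamma_{1,j})$ commensurable to $\Gamma_{2,j}$. Passing to finite index is harmless at the end, because an extension to $G_1$ is determined on a finite-index subgroup and is then forced on all of $\Gamma_1$ by Zariski density. Hence it suffices to treat $\delta\colon \Gamma_1\to\Gamma_2$ with both lattices irreducible.

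\emph{Step 2: the irreducible case.} If $G_1$ has real rank $\geq 2$, I apply Theorem \ref{teo:superrigidity} with $\mathbf H$ the adjoint algebraic group whose identity component of real points is $G_2$; this is algebraic since $G_2$ is centreless. The image $\delta(\Gamma_1)=\Gamma_2$ is Zariski dense by Borel density, so $\delta$ extends to a surjective continuous homomorphism $\phi\colon G_1\to G_2$. Discreteness of $\Gamma_2$ makes $\phi$ an isomorphism: the kernel is a normal subgroup that meets $\Gamma_1$ trivially, and it must be trivial since otherwise $\Gamma_2$ would be non-discrete in the image. If $G_2$ has higher rank, the symmetric argument with $\delta^{-1}$ applies.

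If both groups have rank one, they are simple. Then $\Gamma_1$ and $\Gamma_2$ are quasi-isometric to the symmetric spaces $X_1$ and $X_2$, so these spaces are quasi-isometric, which forces equal dimension and rank. Mostow--Prasad rigidity (for finite volume, excluding $\PSL_2(\matR)$) then gives an isometry conjugating $\Gamma_1$ to $\Gamma_2$, and this extends $\delta$ to an isomorphism $G_1\to G_2$, up to possibly composing with an outer element. The mixed case, in which exactly one group has rank one, cannot occur, because a higher-rank irreducible lattice has property (T) or contains $\matZ^2$ subgroups, whereas a rank-one lattice does not.

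\emph{Main obstacle.} The delicate step is Step 1: proving that the abstract isomorphism respects the product decompositions. I would handle this through the normal subgroup and centraliser argument above, using Borel density repeatedly. I also expect care to be needed with finite-index passages and non-cocompact rank-one lattices, for which Prasad's extension of Mostow's theorem is required.
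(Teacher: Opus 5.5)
Your overall architecture matches what the paper has in mind; its justification is the single sentence ``combine superrigidity in rank $\geq 2$ with Mostow--Prasad in rank one''. Your Step~1 makes explicit the reduction to irreducible lattices that the paper leaves implicit, and it goes through with the Zariski-closure argument of Corollary~\ref{cor:reducible-implies-decomposable}. The higher-rank half of Step~2 is also fine. Note that applying superrigidity to whichever side has real rank $\geq 2$ already forces $G_1\cong G_2$, so the mixed case needs no separate treatment. That is fortunate, because your stated reason for excluding it is false: lattices in $\mathrm{PSp}(n,1)$, $n\geq 2$, and in $F_4^{-20}$ have property (T), and non-uniform lattices in $\SO^+_{n,1}(\matR)$, $n\geq 3$, contain $\matZ^{n-1}$. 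That sentence should simply be deleted.

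The genuine gap is in the rank-one paragraph.

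\emph{Non-uniform lattices.} The claim that $\Gamma_i$ is quasi-isometric to $X_i$ holds only for cocompact lattices, since Milnor--\v{S}varc needs a cocompact action. A non-uniform lattice is quasi-isometric to $X_i$ with an equivariant family of open horoballs removed, not to $X_i$ itself. The statement you are proving allows non-uniform lattices.

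\emph{Uniform lattices.} Even in the cocompact case, ``equal dimension and rank'' does not identify $X_1$ with $X_2$. For instance, $\matH^4$ and $\matH^2(\matC)$ both have real rank one and real dimension $4$, and this is exactly the pair the paper has to separate. Your next step, ``Mostow--Prasad gives an isometry conjugating $\Gamma_1$ to $\Gamma_2$'', presupposes $X_1\cong X_2$, so as written it is unjustified. Upgrading a quasi-isometry to a homothety would need Pansu's theorem \cite{P}, and the non-uniform case would still need a separate argument.

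\emph{Fix.} Drop the quasi-isometry detour and invoke strong rigidity in its actual generality: Mostow for uniform lattices, Prasad for non-uniform ones. It is stated for lattices in two a priori different groups $G_1$, $G_2$ that are centreless and without compact factors, with $G_1$ simple of rank one and $G_1\not\cong\PSL_2(\matR)$. It directly yields a continuous isomorphism $G_1\to G_2$ extending $\delta$, with no prior identification of $X_1$ and $X_2$ needed.
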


The only reason for excluding factors isomorphic to $\mathrm{PSL}_2(\matR)$ is the fact that a surface group $\Gamma$ admits uncountably many non $\mathrm{PGL}_2(\matR)$-conjugate discrete and faithful respresentations in $\mathrm{PSL}_2(\matR)$. Therefore in this case the two inclusions of $\Gamma$ in $G_1=\mathrm{PSL}_2(\matR)$ and $G_2=\mathrm{PSL}_2(\matR)$ may not induce a continuous isomorphism from $G_1$ to $G_2$, but it still holds true that $G_1$ and $G_2$ are isomorphic. 

Indeed the following result still holds true \cite[Theorem 15.1.1]{Morris}:
\begin{cor}\label{cor:weak-Mostow}
Let $G_1$ and $G_2$ be connected, semisimple Lie groups with trivial centre and no compact factors, and $\Gamma_1<G_1$, $\Gamma_2<G_2$ two lattices. If $\Gamma_1$ is isomorphic to $\Gamma_2$, then $G_1$ is isomorphic to $G_2$.\end{cor}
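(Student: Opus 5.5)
The plan is to deduce Corollary \ref{cor:weak-Mostow} from Theorem \ref{teo:Mostow} by isolating the factors isomorphic to $\mathrm{PSL}_2(\matR)$. First, I pass to finite-index subgroups and decompose $\Gamma_1$ into irreducible factors. Since $G_1$ has trivial centre and no compact factors, it splits as a product of simple factors. By the standard decomposition of lattices, a finite-index subgroup $\Gamma_1'$ of $\Gamma_1$ is a product $\Lambda_1\times\dots\times\Lambda_s$ of irreducible lattices $\Lambda_j$ in normal subgroups $N_j<G_1$ with $G_1=N_1\times\dots\times N_s$. Among these, the factors $N_j\cong\mathrm{PSL}_2(\matR)$ are exactly those whose $\Lambda_j$ is virtually a surface group. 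Indeed, an irreducible lattice in a product of several simple factors cannot be virtually a surface group (it has rank $\geq 2$ by Prasad--Raghunathan, for instance, since it contains $\matZ^2$), and neither can a lattice in any other simple group. The latter holds because the cohomological dimension or the virtual cohomological dimension differs from that of a surface group. In the non-cocompact case one uses instead the centralisers and the virtual cohomological dimension $\dim X - \mathrm{rank}_\matQ$; alternatively, it suffices to argue that $\mathrm{PSL}_2(\matR)$ is the only simple group of this type containing lattices that are virtually free or surface groups, since lattices in any other simple group, or in higher rank, are not virtually free.

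Second, I transport the decomposition through the isomorphism $\delta\colon\Gamma_1\to\Gamma_2$. The decomposition of $\Gamma_i$, up to commensurability, into irreducible factors is intrinsic. By the argument of Lemma~\ref{lemma:finitecenter}, lattices in centreless semisimple groups without compact factors have trivial virtual centre. It follows that the product decomposition into directly indecomposable factors is unique up to commensurability, because the factors can be recovered as maximal commensurated normal subgroups, or via the centraliser: the centraliser of a factor is virtually the product of the others. Hence, after passing to finite-index subgroups, $\delta$ maps the irreducible factors of $\Gamma_1$ bijectively onto those of $\Gamma_2$. Write $G_i=P_i\times Q_i$, where $P_i$ is the product of the $\mathrm{PSL}_2(\matR)$ factors and $Q_i$ has none. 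Then $\delta$ restricts to isomorphisms between the lattices $\Gamma_1\cap Q_1$ and $\Gamma_2\cap Q_2$ (virtually), and between the surface-group parts.

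Third, I conclude factorwise. The factor $Q_1$ contains the lattice $\Gamma_1\cap Q_1$, which is isomorphic to a lattice in $Q_2$, so Theorem \ref{teo:Mostow} gives $Q_1\cong Q_2$. The $\mathrm{PSL}_2(\matR)$ parts match because the number of surface-group factors is the same on both sides, so $P_1\cong P_2\cong\mathrm{PSL}_2(\matR)^t$. Therefore $G_1\cong G_2$.

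The main obstacle is the first step, namely characterising the $\mathrm{PSL}_2(\matR)$ factors purely group-theoretically, including the non-uniform case where lattices are virtually free. My first attempt would use cohomological dimension together with the known facts that the only virtually free or surface lattices are those in $\mathrm{PSL}_2(\matR)$. Failing that, I would simply cite \cite[Theorem 15.1.1]{Morris}, as the paper does.
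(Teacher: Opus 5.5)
The paper gives no argument for this corollary beyond the citation \cite[Theorem 15.1.1]{Morris}, so deriving it from Theorem \ref{teo:Mostow} is a legitimate alternative. Your overall plan is sound: split off the irreducible factors that are lattices in a single $\PSL_2(\matR)$, match irreducible factors via $\delta$, and use rigidity on the rest. Step 1 is essentially fine (cohomological dimension and $\matZ^2$ subgroups do the job), but the step ``write $G_i=P_i\times Q_i$ \ldots\ then Theorem \ref{teo:Mostow} gives $Q_1\cong Q_2$'' has a genuine gap.

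The problem is this. Theorem \ref{teo:Mostow}, as stated, forbids any \emph{simple} factor isomorphic to $\PSL_2(\matR)$. Your Step 1 only detects the \emph{irreducible} factors $N_j$ that are themselves isomorphic to $\PSL_2(\matR)$. A simple factor $\PSL_2(\matR)$ can sit inside a higher-rank irreducible factor. Examples are a Hilbert modular group, which is irreducible in $\PSL_2(\matR)\times\PSL_2(\matR)$, or $\PSL_2(\mathcal{O}_K)$ for a cubic field $K$ with one real and one complex place, which is irreducible in $\PSL_2(\matR)\times\PSL_2(\matC)$.

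Under either reading of $P_i$ the argument breaks.
\begin{itemize}
\item If $P_i$ collects all simple factors isomorphic to $\PSL_2(\matR)$, so that $Q_i$ really has none, then $\Gamma_i$ need not virtually split along $P_i\times Q_i$. In the second example $\Gamma\cap Q_i$ is trivial, being a discrete subgroup of $\PSL_2(\matC)$ normalised by the dense projection of $\Gamma$. Also $P_i$ is not $\PSL_2(\matR)^t$ with $t$ the number of virtually free or surface factors: in the first example $t=0$ but $P_1=G_1$.
\item If instead $P_i$ is the product of the irreducible factors $N_j\cong\PSL_2(\matR)$, the lattice does split. But $Q_i$ may still contain $\PSL_2(\matR)$ simple factors, and then Theorem \ref{teo:Mostow} does not apply to $(Q_1,Q_2)$.
\end{itemize}
Either way, for $G_1=\PSL_2(\matR)^2$ with a Hilbert modular lattice, nothing you invoke shows $G_2\cong G_1$.

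To close the gap, work irreducible factor by irreducible factor instead of with a block $Q_i$. Once $\delta$ matches (finite-index subgroups of) the irreducible factors $\Lambda_j<N_j$ with $\Lambda'_{\sigma(j)}<N'_{\sigma(j)}$, treat each pair as follows.
\begin{itemize}
\item If $N_j$ or $N'_{\sigma(j)}$ has real rank $\geq 2$, apply Theorem \ref{teo:superrigidity} from that side. It carries no $\PSL_2(\matR)$ restriction in higher rank. Zariski density of the image is Borel density, and discreteness of the image makes the extension an isomorphism.
\item If both have rank one and neither lattice is virtually free or a surface group, use Mostow--Prasad.
\item The remaining pairs are $\PSL_2(\matR)$ on both sides.
\end{itemize}
Then $G_1=\prod_j N_j\cong\prod_j N'_{\sigma(j)}=G_2$.

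The matching in your Step 2 also needs more than ``maximal commensurated normal subgroups'', since the product decompositions exist only virtually. Pass to a common finite-index subgroup $H$. The projection of $\delta(\Lambda_j\cap H)$ to a factor $\Lambda'_k$ is normal in a finite-index subgroup of $\Lambda'_k$. Zariski density and irreducibility force this projection to be finite for all but one $k$. Borel density then makes it trivial, because lattices in centreless groups without compact factors have no nontrivial finite normal subgroups.
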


We conclude with the proof of a technical lemma:
\begin{lemma}\label{lemma:reducibility}
    Let $G=G' \times D$ be a product of a semisimple Lie group of non-compact type $G'$ and an infinite discrete group $D$. Then any lattice $\Gamma<G$ is reducible, in the sense that it has a finite-index subgroup of the form $\Gamma_1 \times \Gamma_2$, where $\Gamma_1<G'$ and $\Gamma_2 < D$ are lattices.
\end{lemma}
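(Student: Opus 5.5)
Set $\Gamma_1 = \Gamma \cap G'$ and $\Gamma_2 = \Gamma \cap D$, which I view as the subgroups of $\Gamma$ lying in $G' \times \{1\}$ and $\{1\} \times D$. Both are normal in $\Gamma$, they commute, and they intersect trivially. Hence $\Gamma_1 \times \Gamma_2$ is a subgroup of $\Gamma$. The plan is to show that $\Gamma_1$ is a lattice in $G'$, that $\Gamma_2$ has finite index in $D$, and that $\Gamma_1 \times \Gamma_2$ has finite index in $\Gamma$.

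\textbf{Step 1: $\Gamma_1$ is a lattice in $G'$.} Let $G_1 = G' \times \{1\}$, which is the identity component of $G$, so it is open and has countable index. Because $G_1$ is open, $\Gamma \cap G_1$ is a lattice in $G_1$. Concretely, $G/\Gamma$ has finite invariant measure and is a countable disjoint union of the open pieces $G_1 g \Gamma / \Gamma$. Each piece is homeomorphic to $G_1 / (G_1 \cap g\Gamma g^{-1})$, and each of these has finite covolume. Taking $g = 1$ gives that $\Gamma_1$ is a lattice in $G'$. If $\Gamma$ is cocompact, so is $\Gamma_1$, since the identity piece is a closed and open subset of a compact space.

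\textbf{Step 2: the projection to $D$ is finite-index and has finite fibres over $\Gamma_1$.} Let $p_D \colon \Gamma \to D$ be the projection. Its kernel is $\Gamma_1$. The image $p_D(\Gamma)$ has finite index in $D$, because the number of pieces in Step 1 that actually occur is the index of $G_1 p_D(\Gamma)$ in $G$, and each piece has positive measure while the total measure is finite. Now consider the projection $p_{G'} \colon \Gamma \to G'$. Its image normalises $\Gamma_1$, so it lies in $N_{G'}(\Gamma_1)$. By Borel density, the normaliser of a lattice in a semisimple group without compact factors and with trivial centre is discrete and contains $\Gamma_1$ with finite index. I would cite this or deduce it from the Zariski closure argument of Lemma \ref{lemma:finitecenter}: $N_{G'}(\Gamma_1)^\circ$ centralises $\Gamma_1$, hence is central, hence trivial. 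It follows that $p_{G'}(\Gamma)$ contains $\Gamma_1$ with finite index.

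\textbf{Step 3: conclude.} The kernel of $p_{G'}$ is $\Gamma_2$. We have $p_{G'}(\Gamma_1 \times \Gamma_2) = \Gamma_1$, which has finite index in $p_{G'}(\Gamma)$. Therefore the preimage $\Gamma_1 \times \Gamma_2$ has finite index in $\Gamma$. Consequently $p_D(\Gamma_1 \times \Gamma_2) = \Gamma_2$ has finite index in $p_D(\Gamma)$, and hence in $D$. So $\Gamma_2$ is a lattice in the discrete group $D$. Here $D$ is necessarily finitely generated up to finite index, being commensurable to a quotient of $\Gamma$.

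\textbf{Main obstacle.} The delicate point is Step 2, namely the discreteness of $N_{G'}(\Gamma_1)$ and its finiteness over $\Gamma_1$. This is where semisimplicity with trivial centre and no compact factors enters. If $G'$ had a centre, I would pass to the adjoint group and argue modulo the finite centre. The hypothesis that $D$ is infinite only ensures that both factors are nontrivial; it is not needed for the argument itself.
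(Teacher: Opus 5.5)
Your proof is correct. Steps 1 and 2 follow the paper's argument. The paper also splits $G/\Gamma$ into the open $G'$-leaves, which are all copies of $G'/(\Gamma\cap G')$ and are indexed by $D/p_D(\Gamma)$. From this it gets that $\Gamma\cap G'$ is a lattice and that $p_D(\Gamma)$ has finite index. It then uses the same key fact you use: a lattice in $G'$ has finite index in its normaliser, so $p_{G'}(\Gamma)$ is a lattice.

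The difference is in the finish. The paper runs a second measure argument: it foliates $G/\Gamma$ by copies of $D/(\Gamma\cap D)$ parametrised by $G'/p_{G'}(\Gamma)$, and compares volumes to get $[D:\Gamma\cap D]<\infty$. You instead note that $\Gamma\cap D$ is the kernel of $p_{G'}|_\Gamma$. This gives $[\Gamma:\Gamma_1\times\Gamma_2]=[p_{G'}(\Gamma):\Gamma_1]<\infty$, and you then project to $D$. Your ending is purely algebraic and avoids a Fubini-type disintegration. It also shows directly that $\Gamma_1\times\Gamma_2$ has finite index in $\Gamma$, which the paper leaves implicit when it says this product ``is a lattice in $G$''.

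Two small corrections.

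\begin{itemize}
\item In Step 2, ``each piece has positive measure while the total measure is finite'' does not by itself bound the number of pieces. You need that all pieces have the same measure. This holds because left translation by $(1,d)$ carries $G_1\Gamma/\Gamma$ onto $G_1(1,d)\Gamma/\Gamma$ and the measure is $G$-invariant. Equivalently, $G'\cap (1,d)\Gamma(1,d)^{-1}=\Gamma_1$ for every $d\in D$, which is what the paper's ``each leaf is isomorphic to $G'/(\Gamma\cap G')$'' encodes.
\item Your closing remark that $D$ is virtually finitely generated is false and should be dropped. For example, take $D=\matQ$ with the discrete topology; then $\Gamma_1\times D$ is a lattice in $G'\times D$. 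The remark is not used in your argument.
\end{itemize}
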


\begin{proof}
    Denote by $p_1$ and $p_2$ the projections of $G$ onto $G'$ and $D$ respectively. The coset space $G/\Gamma$ inherits a foliation from the foliation of $G$ into copies of $G'$. Each leaf is isomorphic to the coset space $G'/(\Gamma \cap G')$, while the leaves are in a one-to-one correspondence with the discrete coset space $D/p_2(\Gamma)$. 
    
    Since $\Gamma$ is a lattice, $\Gamma \cap G'$ is necessarily a lattice in $G'$, and $p_2(\Gamma)$ is a finite-index subgroup of $D$. Moreover the group $p_1(\Gamma)<G'$ normalises $\Gamma \cap G'$. Since in a semisimple Lie group of non-compact type every lattice has finite index in its normaliser, $p_1(\Gamma)$ is a lattice.

    Now, the finite volume coset space $G/\Gamma$ is also foliated into copies of $D/(\Gamma\cap D)$, and these are in a one-to-one correspondence with the coset space $G'/p_1(\Gamma)$. Thus $\Gamma \cap D$ is a finite-index subgroup of $D$, and therefore the group $(\Gamma \cap G') \times (\Gamma \cap D)<\Gamma$ is a lattice in $G$.
\end{proof}


\begin{thebibliography}{99}




\bibitem{A} \textsc{A.~Avez}, Remarque sur les formes de Pontrjagin. \emph{C.~R.~Acad.~Sci., Paris, S\'er.~A} \textbf{270} (1970), 1248.

\bibitem{BDL} \textsc{S.~A.~Ballas, J.~Danciger, G.-S.~Lee}, Convex projective structures on nonhyperbolic three-manifolds. \emph{Geom.~Topol.} \textbf{22} (2018), 1593--1646.


\bibitem{B2} \textsc{Y.~Benoist}, Convexes divisibles II, \emph{Duke~Math.~J.} \textbf{120} (2003), 97–-120.

\bibitem{B4} \bysame, Convexes divisibles IV. \emph{Invent.~Math.} \textbf{164} (2006), 249--278.

\bibitem{Bqi} \bysame, Convexes hyperboliques et quasiisom\'etries. \emph{Geom.~Dedicata} \textbf{122} (2006), 109--134.

\bibitem{Bquest} \bysame, Exercises on divisible convex sets. \href{https://www.imo.universite-paris-saclay.fr/~yves.benoist/prepubli/12GearJuniorRetreat.pdf}{\url{https://www.imo.universite-paris-saclay.fr/~yves.benoist/prepubli/12GearJuniorRetreat.pdf}} (2012, updated 2023).


\bibitem{BV} \textsc{P.-L.~Blayac, G.~Viaggi}, Divisible convex sets with properly embedded cones. \emph{Publ.~Math.~IHES} (2024).

\bibitem{B} \textsc{M.~D.~Bobb}, Codimension-1 simplices in divisible convex domains. \emph{Geom.~Topol.} \textbf{25} (2021), 3725--3753.


\bibitem{Borel-density} \textsc{A.~Borel}, Density properties for certain subgroups of semisimple groups without compact components. \emph{Ann.~Math.} \textbf{72} (1960), 179–-188.




\bibitem{CLM2} \textsc{S.~Choi, G.-S.~Lee, L.~Marquis}, Convex projective generalized Dehn filling. \emph{Ann.~Sci.~\'Ec.~Norm.~Sup\'er.} \textbf{53} (2020) 217--266.

\bibitem{CLM1} \bysame, Deformation spaces of Coxeter truncation polytopes. \emph{J.~London~Math.~Soc.} \textbf{106}:4 (2022), 3822--3864.

\bibitem{C} \textsc{R.~Chow}, Groups quasi-isometric to complex hyperbolic space. \emph{Trans.~Am.~Math.~Soc.} \textbf{348} (1996), 1757--1769.

\bibitem{CM} \textsc{M.~Conder, C.~Maclachlan}, Small volume compact hyperbolic $4$-manifolds. \emph{Proc.~Amer.~Math.~Soc.} \textbf{133} (2005), 2469--2476.

\bibitem{Cor} \textsc{K.~Corlette}, Archimedean Superrigidity and Hyperbolic Geometry. \emph{Ann.~Math.}, \textbf{135} (1992), 165--182.

\bibitem{DFWZ} \textsc{S.~Douba,~B.~Fl\'echelles, T.~Weisman, F.~Zhu}, Cubulated hyperbolic groups admit Anosov representations. To appear in \emph{Geom.~Topol.}, \href{https://arxiv.org/abs/2309.03695}{\texttt{arXiv:2309.03695}}. 

\bibitem{EB1} \textsc{P.~Eberlein}, A canonical form for compact non-positively curved manifolds whose fundamental groups have nontrivial center. \emph{Math.~Ann.} \textbf{260} (1982), 23--29.

\bibitem{EB2} \bysame, Euclidean de Rham factor of a lattice of nonpositive curvature. \emph{J.~Diff.~Geom.} \textbf{18} (1983), 209--220. 

\bibitem{EB3} \bysame, L-subgroups in spaces of nonpositive curvature. ``Curvature and topology of Riemannian manifolds'', \emph{Lect.~Notes~Math.} \textbf{1201} (1986), 41--88.

\bibitem{FJ1} \textsc{F.~T.~Farrell, L.~E.~Jones}, Topological rigidity for compact nonpositively curved manifolds. \emph{Proc.~Sympos.~Pure~Math.} \textbf{54} (1993), 229--274.

\bibitem{FJ2} \bysame, 
Rigidity for aspherical manifolds with $\pi_1(M)\subset \mathrm{GL}_m(\mathbb{R})^*$. \emph{Asian~J.~Math.} \textbf{2} (1998), 215--262.

\bibitem{F} \textsc{R.~O.~Filipkiewicz}, \emph{Four-Dimensional Geometries}. Ph.D thesis, University of Warwick (1984).

\bibitem{Geng} \textsc{A. Geng}, \emph {The classification of five-dimensional geometries}, ProQuest LLC, Ann Arbor, MI (2016).




\bibitem{Helgason} \textsc{S.~Helgason}, Differential Geometry, Lie Groups, and Symmetric Spaces, American Mathematical Society, (2001).

\bibitem{Hpaper} \textsc{J.~A.~Hillman} On 4-manifolds which admit geometric decompositions. \emph{J.~Math.~Soc.~Japan} \textbf{50} (1998), 415--431. 

\bibitem{H} \bysame, Four-manifolds, geometries and knots. \emph{Geometry~and~Topology~Monographs} \textbf{5} (2002).



\bibitem{IZ2} \textsc{M.~Islam, A.~Zimmer}, A flat torus theorem for convex co-compact actions of projective linear groups. \emph{J.~Lond.~Math.~Soc.~(2)} \textbf{3} (2021), 470--489.

\bibitem{IZ} \bysame, Convex cocompact representations of 3-manifold groups. \emph{J.~Topol.} \textbf{17} (2024).


\bibitem{K} \textsc{M.~Kapovich}, Convex projective structures on Gromov-Thurston manifolds. \emph{Geom.~Topol.} \textbf{11} (2007), 1777--1830.

\bibitem{Kob} \textsc{S.~Kobayashi}, Projectively invariant distances for affine and projective structures. ``Differential geometry''
, \emph{Banach~Cent.~Publ.} \textbf{12} (1984), 127--152.

\bibitem{Koe} \textsc{M.~Koecher}, The Minnesota notes on Jordan algebras and their applications. \emph{Lecture Notes in Mathematics} \textbf{1710} (1999), Springer-Verlag.

\bibitem{KS} \textsc{A.~Kolpakov, L.~Slavich}, Symmetries of hyperbolic 4-manifolds. \emph{Int.~Math.~Res.~Not.~IMRN} \textbf{2016} (2016), 2677--2716.



\bibitem{LR2} \textsc{J.-F.~Lafont, L.~Ruffoni}, Special cubulation of strict hyperbolization. \emph{Invent.~Math.} {\textbf 236} (2024), 925--997.

\bibitem{LMR} \textsc{G.-S.~Lee, L.~Marquis, S.~Riolo}, A small closed convex projective 4-manifold via Dehn filling. \emph{Publ.~Mat.} \textbf{66}:1 (2022), 369--403.



\bibitem{L} \textsc{C.~Long}, Small volume closed hyperbolic $4$-manifolds. \emph{Bull.~London~Math.~Soc.} \textbf{40} (2008), 913--916.

\bibitem{M} \textsc{G.~Margulis}, Discrete subgroups of semisimple Lie groups. Springer-Verlag (1991).



\bibitem{Morris} \textsc{D.~W.~Morris}, An introduction to Arithmetic groups. Deductive press (2015).

\bibitem{N} \textsc{S.~P.~Novikov}, Rational Pontrjagin classes. Homeomorphism and homotopy type of closed manifolds. I. \emph{Izv.~Akad.~Nauk~SSSR~Ser.~Mat.} \textbf{29} (1965), 1373--1388.


\bibitem{O} \textsc{P.~Ontaneda}, Riemannian hyperbolization. \emph{Publ.~Math.~IHES} \textbf{131} (2020), 1--72.


\bibitem{P} \textsc{P.~Pansu}, Metriques de Carnot-Caratheodory et quasi-isometries des espaces de rang 1. \emph{Ann.~Math.} \textbf{129} (1989), 1--60.

\bibitem{PR} \textsc{G.~Prasad, M.~S.~Ragunathan}, Cartan subgroups and lattices in semi-simple groups,\emph{Ann.~Math.} \textbf{96(2)} (1972), 296--317. 

\bibitem{Rag} \textsc{M.~S.~Raghunathan}, Discrete Subgroups of Lie Groups. \emph{Springer-Verlag} (1972).

\bibitem{RT} \textsc{J.~G.~Ratcliffe, S.~T.~Tschantz}, The volume spectrum of hyperbolic 4-manifolds. \emph{Experiment.~Math.} \textbf{9} (2000), 101--126.

\bibitem{R} \textsc{L.~Ruffoni}, Manifolds without real projective or flat conformal structures. \emph{Proc.~Amer.~Math.~Soc.} {\bf 151} (2023), 3611--3620.


\bibitem{Tbook} \textsc{W.~P.~Thurston}, Three-dimensional geometry and topology, vol. 1 \emph{Princeton University Press} (1997).

\bibitem{T} \textsc{B.~Tshishiku}, Pontryagin classes of locally symmetric manifolds. \emph{Algebr.~Geom.~Topol.} \textbf{15} (2015), 2709--2756.

\bibitem{Vey} \textsc{J.~Vey}, Sur les automorphismes affines des convexes ouvertes saillants. \emph{Ann.~Sc.~Norm.~Sup.~Pisa} \textbf{24} (1970), 641--665.

\bibitem{V} \textsc{\'E.~B.~Vinberg} The theory of homogeneous convex cones. \emph{Trudy~Moskov.~Mat.~Ob\v{s}\v{c}.} \textsc{12} (1963), 303--358.

\bibitem{V2} \bysame, The structure of the group of automorphisms of a homogeneous convex cone. \emph{Trudy~Moskov.~Mat.~Ob\v{s}\v{c}.} \textsc{13} (1965), 63--93.

\bibitem{V3} \bysame, Discrete linear groups that are generated by reflections.  \emph{Izv.~Akad.~Nauk~SSSR~Ser.~Mat.} \textsc{35} (1971), 1072--1112.

 
\end{thebibliography}
\end{document}